\theoremstyle{plain}
  \newtheorem{theorem}{Theorem}[section]
  \newtheorem{corollary}[theorem]{Corollary}
  \newtheorem{lemma}[theorem]{Lemma}
  \newtheorem{proposition}[theorem]{Proposition}
\theoremstyle{definition}
  \newtheorem{definition}[theorem]{Definition}
  \newtheorem{assumption}[theorem]{Assumption}
  \newtheorem{example}[theorem]{Example}
  \newtheorem{remark}[theorem]{Remark}
\newcommand{\Z}{\mathbb{Z}}
\newcommand{\Q}{\mathbb{Q}}
\newcommand{\Cat}{\mathbf{Cat}}
\newcommand{\Set}{\mathbf{Set}}
\newcommand{\Ch}{\mathbf{Ch}}
\newcommand{\Top}{\mathbf{Top}}
\newcommand{\Tcat}{\mathbf{TCat}}
\newcommand{\op}{\mathrm{op}}
\newcommand{\A}{\mathcal{A}}
\newcommand{\B}{\mathcal{B}}
\newcommand{\T}{\mathcal{T}}
\newcommand{\V}{\mathcal{V}}
\newcommand{\F}{\mathcal{F}}
\newcommand{\W}{\mathcal{W}}
\newcommand{\C}{\mathcal{C}}
\newcommand{\D}{\mathcal{D}}
\newcommand{\E}{\mathcal{E}}
\newcommand{\M}{\mathcal{M}}
\newcommand{\Ho}{\mathrm{Ho}}
\author{Kazunori Noguchi and Kohei Tanaka}
\title {\textbf{The Euler characteristic of an enriched category}}
\begin{document}

\renewcommand{\thesection}{\arabic{section}}

\maketitle

\textbf{2010 Mathematics Subject Classification :} 18D20; 55U35

\textbf{Keywords :} Euler characteristic, enriched categories, monoidal model categories


\begin{abstract}
We define the Euler characteristic of a category enriched by a monoidal model category. If a monoidal model category $\V$ is equipped with an Euler characteristic that is compatible with weak equivalences and  fibrations in $\V$, then our Euler characteristic of $\V$-enriched categories is also compatible with weak equivalences and fibrations in the canonical model structure on the category of $\V$-enriched categories induced by that of $\V$. In particular, we focus on the case of topological categories; i.e., categories enriched by the category of topological spaces. As its application, we obtain the ordinary Euler characteristic of a cellular stratified space $X$ by computing the Euler characteristic of the face category $C(X)$ induced from $X$.
\end{abstract}

\tableofcontents

\section{Introduction}

The Euler characteristic of a topological space is a classical homotopy invariant. However, the Euler characteristic is defined not only for topological spaces, but also for finite posets \cite{Rot64}, groupoids \cite{BD01}, and categories \cite{Lei08a}, \cite{BL08}, \cite{FLS11}, \cite{Nog11}, \cite{Nog13}. Moreover, Leinster defined an invariant for categories enriched by a monoidal category, called {\em magnitude} \cite{Lei13}. Our work in this paper is based on magnitude, so we give a quick review of it here.

Let $\V$ be a monoidal category and let $k$ be a ring. For finite sets $I$ and $J$, an $I\times J$ \textit{matrix} over $k$ is a function $I\times J\to k$. For an $I\times J$ matrix $\zeta$ and a $J\times H$ matrix $\xi$, the $I\times H$ matrix $\zeta \xi$ is defined by $\zeta \xi(i,h)=\sum_j \zeta(i,j)\xi(j,h)$ for any $i$ in $I$ and $h$ in $H$. An $I\times J$ matrix $\zeta$ has a $J\times I$ transpose $\zeta^*$. Given a finite set $I$, we write $u_I: I\to k$ (or simply $u$) for the column vector with $u_I(i)=1$ for all $i$ in $I$. Let $\zeta$ be an $I\times J$ matrix over $k$. A \textit{weighting} on $\zeta$ is a column vector $w:J\to k$ such that $\zeta w=u_I$. A \textit{coweighting} on $\zeta$ is a row vector $v:I\to k$ such that $v \zeta =u^*_I$. The matrix $\zeta$ has \textit{magnitude} if it admits a weighting and a coweighting. Its \textit{magnitude} is then
$$|\zeta|=\sum_jw(j)=\sum_i v(i)\in k.$$
This definition does not depend on the choice of a weighting and a coweighting.

Let $$|-|:(\V_0/\cong ,\otimes,\mathbf{1})\longrightarrow (k,\cdot,1)$$
be a monoid homomorphism where $\V_0$ is the collection of objects of $\V$ and $\V_0/\cong$ denotes the  isomorphism classes of objects of $\V$. For a $\V$-category $\A$ having finitely many objects, the \textit{similarity matrix} of $\V$ is the $\A_0\times \A_0$ matrix $\xi_{\A}$ over $k$ defined by $\xi_{\A}(a,b)=|\V(a,b)|$ for any objects $a$ and $b$ of $\A$. If $\xi_{\A}$ has a magnitude, then $\V$-category $\A$ has a \textit{magnitude} ; its magnitude is then $|\A|=|\xi_{\A}|$. In particular, Leinster studied the magnitude of finite metric spaces. See \cite{Lei13} for more details.

In this paper, we consider the case in which a monoidal category is equipped with a model structure. A \textit{model structure} on a category consists of three classes of morphisms, called \textit{weak equivalences, fibrations, and cofibrations}, satisfying certain conditions, and this is a framework to do homotopy theory \cite{Qui67}. If a monoidal model category $\V$ satisfies certain conditions, a model structure is induced on the category $\V\textbf{Cat}$ of categories enriched by $\V$, called the \textit{canonical model structure}. Suppose that 
$$|-|:(\V_0/\cong, \otimes, \mathbf{1})\longrightarrow (k,\cdot,1)$$
is a monoid homomorphism compatible with weak equivalences and fibrations in $\V$; i.e., for a weak equivalence $f:X \to Y$ in $\V$, we have $|X|=|Y|$ and for a fibration $p: E \to B$ in $\V$ with fiber $F$, we have $|E|=|B| \cdot |F|$. This is a natural assumption for topologists when we regard $|-|$ as the standard topological Euler characteristic. By following Leinster's work, we can define an invariant of $\V$-categories induced by $|-|$. Then, one can ask whether the induced invariant is compatible with weak equivalences and fibrations in $\V\textbf{Cat}$. The following is a positive answer to the question:

\begin{theorem}\label{main} Suppose that the category of $\V$-enriched categories admits the canonical model structure and
$\chi(-)$ is the Euler characteristic of $\V$-enriched categories.
\begin{enumerate}
\item If a $\V$-functor $f:\A\to \B$ is a weak equivalence, then the Euler characteristics of $\A$ and $\B$ are equal; i.e., $\chi(\A)=\chi(\B)$.
\item If a $\V$-functor $p : \E \to \B$ is a fibration with fiber $\F$ satisfying certain conditions, then 
we have $\chi(\E)=\chi(\B)\chi(\F)$.
\end{enumerate}
\end{theorem}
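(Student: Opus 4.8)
The plan is to isolate two ingredients --- a \emph{twin-deletion} lemma for magnitude of matrices, and the hypothesised compatibility of $|-|$ with weak equivalences and fibrations in $\V$ --- and to reduce both parts of the theorem to them by passing to suitable skeleta. Throughout one works with fibrant models, which is harmless for $\chi$ since a fibrant replacement of a $\V$-category may be taken bijective on objects and locally a weak equivalence. Write $\chi(\A)=|\xi_\A|$ with $\xi_\A(a,b)=|\V(a,b)|$, and call objects $a,a'$ of a $\V$-category \emph{homotopy equivalent} if they are isomorphic in $\Ho(\A)$; composition with such an equivalence then gives weak equivalences $\A(a,x)\xrightarrow{\sim}\A(a',x)$ and $\A(x,a)\xrightarrow{\sim}\A(x,a')$ in $\V$ for every $x$, so by compatibility of $|-|$ the $a$-th and $a'$-th rows of $\xi_\A$ agree, as do the columns. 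I would next prove the \emph{twin-deletion lemma}: if a finite matrix has two equal rows whose correspondingly indexed columns are also equal, then deleting one such row-and-column changes neither the existence nor the value of the magnitude --- from a weighting one obtains a weighting of the smaller matrix, with the same total sum, by merging the two coordinates (adding their weights), and dually for coweightings; conversely a (co)weighting of the smaller matrix extends by a zero coordinate. Iterating over homotopy-equivalence classes yields $\chi(\A)=\chi(\mathrm{sk}\,\A)$, where $\mathrm{sk}\,\A$ is the full sub-$\V$-category on one object from each class.

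For part (1), a $\V$-functor $f\colon\A\to\B$ is a weak equivalence in the canonical model structure precisely when each $\A(a,a')\to\B(fa,fa')$ is a weak equivalence in $\V$ and $\Ho(f)\colon\Ho(\A)\to\Ho(\B)$ is essentially surjective. The first condition together with compatibility of $|-|$ gives $|\A(a,a')|=|\B(fa,fa')|$ and makes $\Ho(f)$ fully faithful, so $\Ho(f)$ is an equivalence of ordinary categories and therefore induces a bijection on isomorphism classes --- that is, on homotopy-equivalence classes of objects. Choosing the representatives for $\B$ to be $f$-images of those for $\A$, the restriction $f\colon\mathrm{sk}\,\A\to\mathrm{sk}\,\B$ is bijective on objects and still a local weak equivalence, hence $\xi_{\mathrm{sk}\,\A}$ and $\xi_{\mathrm{sk}\,\B}$ are literally the same matrix; thus $\chi(\A)=\chi(\mathrm{sk}\,\A)=\chi(\mathrm{sk}\,\B)=\chi(\B)$.

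For part (2), a fibration $p\colon\E\to\B$ in the canonical model structure is in particular an isofibration on objects, so every object of $\E$ is homotopy equivalent in $\E$ to one lying over the basepoint object $b_0$ of $\B$; the stated conditions on $\F$ moreover let us assume every object of $\B$ homotopy equivalent to $b_0$ and $M:=\B(b_0,b_0)$ connected. Then the full sub-$\V$-category inclusions $\E_{b_0}\hookrightarrow\E$ (objects lying over $b_0$) and $\B|_{b_0}\hookrightarrow\B$ (the single object $b_0$) are weak equivalences, so by part (1) it suffices to prove $\chi(\E_{b_0})=\chi(\B|_{b_0})\,\chi(\F)$ for the restricted fibration over the one-object $\V$-category $\B|_{b_0}$, whose fiber is exactly $\F$. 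For objects $e,e'$ over $b_0$ the map $\E(e,e')\to M$ is a fibration in $\V$ whose fiber over the identity is $\F(e,e')$; since $M$ is connected every fiber has the same value of $|-|$, so compatibility of $|-|$ with fibrations gives $\xi_{\E_{b_0}}(e,e')=|M|\cdot|\F(e,e')|=|M|\,\xi_\F(e,e')$, i.e.\ $\xi_{\E_{b_0}}=|M|\,\xi_\F$. Whenever $w$ is a weighting of $\xi_\F$, the vector $|M|^{-1}w$ is a weighting of $\xi_{\E_{b_0}}$ --- the stated conditions guarantee $|M|$ is invertible in $k$, with $|M|^{-1}=\chi(\B|_{b_0})$ --- and therefore $\chi(\E)=\chi(\E_{b_0})=|M|^{-1}\chi(\F)=\chi(\B|_{b_0})\,\chi(\F)=\chi(\B)\,\chi(\F)$.

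The arithmetic above is routine once the object sets are controlled, so I expect the genuine difficulty to lie in the object-level input, in the generality of an arbitrary monoidal model category $\V$: that a weak equivalence of $\V$-categories induces a bijection on homotopy-equivalence classes of objects, and that a fibration is an isofibration on objects (so that homotopy equivalences in the base lift). These are exactly what make the passages to skeleta and to a one-object base legitimate; granted them, the twin-deletion lemma and the assumed behaviour of $|-|$ on $\V$ dispatch both statements.
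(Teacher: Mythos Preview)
Your proposal is correct. Part~(1) is close in spirit to the paper's argument: both rest on Lemma~\ref{weakly_equivalent}, that weakly equivalent objects have identical rows and columns in the similarity matrix. You package this via a twin-deletion lemma and reduce to skeleta with literally equal matrices; the paper instead leaves $\A$ and $\B$ alone and runs the double-sum trick $\chi(\A)=\sum_a w_\A(a)=\sum_{a,b} v_\B(b)\,\xi_\B(fa,b)\,w_\A(a)=\sum_{a,b} v_\B(b)\,\xi_\A(a,gb)\,w_\A(a)=\chi(\B)$, using a weighting on one side and a coweighting on the other. Either works.

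Part~(2) is where you diverge more substantially. The paper (Theorem~\ref{product}) decomposes $\E=\coprod_i\E_i$ into connected pieces, invokes the groupoid formula of Lemma~\ref{groupoid} to write $\chi(\E_i)=|\E(x_i,x_i)|^{-1}$ and $\chi(\F_i)=|\F_i(x_i,x_i)|^{-1}$, and then uses the single fibration $\E(x_i,x_i)\to\B(px_i,px_i)$ to relate them. This needs $\pi_0\E$ to be a groupoid. Your route---reduce via part~(1) to the full subcategory $\E_{b_0}$ over the basepoint and observe that $\xi_{\E_{b_0}}=|M|\cdot\xi_\F$ entrywise, so that any (co)weighting for $\F$ scales by $|M|^{-1}$ to one for $\E_{b_0}$---is more direct and, as you implicitly noticed, does not actually require $\pi_0\E$ to be a groupoid: you only use the isofibration lift on $\pi_0$ to make $\E_{b_0}\hookrightarrow\E$ essentially surjective. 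The paper's decomposition pays off later when relaxing ``strongly connected'' to ``connected'' (Theorem~\ref{product2}), where one must track the connected summands $\B(b,b)_j$ separately; your matrix-scaling argument would need to be replaced there since $\xi_{\E_{b_0}}$ is no longer a scalar multiple of $\xi_\F$.
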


Computing the Euler characteristic of cellular stratified spaces is an application of the Euler characteristic of $\V$-categories. 
A \textit{cellular stratified} space is a space of attached cells with possibly incomplete boundaries, inductively \cite{Tama}.
If $X$ is a finite cell complex, the Euler characteristic $\chi(X)$ is obtained by counting the numbers of $n$-cells, 
however in the case of cellular stratified spaces this does not work. For example, 
the half-open interval $(0,1]$ is a cellular stratified space consisting of a $1$-cell $(0,1]$ and a $0$-cell $\{1\}$.
We have $\chi\left( (0,1]\right)=1$ since $(0,1]$ is contractible, however the alternating sum of the numbers of $n$-cells is $1-1=0$.

Tamaki shows that a nice cellular stratified space $X$ is homotopy equivalent to the classifying space $BC(X)$ of the face category $C(X)$ enriched by the category of topological spaces in \cite{Tama}. From the following theorem, we can show that the standard Euler characteristic $\chi(X)$ is equal to our Euler characteristic $\chi(C(X))$ of the topological category $C(X)$.

\begin{theorem}\label{main2}
Let $\A$ be a certain topological category. Then, the Euler characteristic $\chi(\A)$ coincides with the topological Euler characteristic $\chi(B\A)$ 
of the classifying space $B\A$ of $\A$.
\end{theorem}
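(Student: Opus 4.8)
The plan is to compare both sides of the claimed identity through the nerve $N\A$ of the topological category $\A$ and the similarity matrix $\xi_\A$. Write $\xi=\xi_\A$ for the $\A_0\times\A_0$ matrix with $\xi(a,b)=\chi(\A(a,b))$, and set $M:=\xi-I$, so that $M(a,b)=\chi(\A(a,b))$ when $a\neq b$ and $M(a,a)=\chi(\A(a,a))-1$; here I use that under the conditions defining a ``certain'' topological category the identity $\mathrm{id}_a$ is a sufficiently nice (nondegenerate) point of the space $\A(a,a)$, so that $\chi(\A(a,a)\setminus\{\mathrm{id}_a\})=\chi(\A(a,a))-1$. Recall that $\chi(\A)$ is built from $\xi$ as a magnitude, and that when $\xi$ is invertible over $\Q$ this magnitude equals $u^{*}\xi^{-1}u$, the sum of all entries of $\xi^{-1}$; this is the description of the left-hand side I will match.

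For the topological side I would first establish
$$\chi(B\A)\;=\;\sum_{p\ge 0}(-1)^{p}\,\chi\bigl(N^{\mathrm{nd}}\A_{p}\bigr),\qquad N^{\mathrm{nd}}\A_{p}\;\cong\;\coprod_{a_0,\dots,a_p}\ \prod_{i=1}^{p}\A^{\bullet}(a_{i-1},a_i),$$
where $N^{\mathrm{nd}}\A_{p}$ is the space of nondegenerate $p$-simplices of $N\A$ (the latching-object complement), and $\A^{\bullet}(a,b)=\A(a,b)$ for $a\neq b$, $\A^{\bullet}(a,a)=\A(a,a)\setminus\{\mathrm{id}_a\}$. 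This is the heart of the argument. Under the hypotheses on $\A$ the simplicial space $N\A$ is proper (Reedy cofibrant), so $B\A=|N\A|$ is built by successively attaching the cells $N^{\mathrm{nd}}\A_{p}\times\Delta^{p}$ along $N^{\mathrm{nd}}\A_{p}\times\partial\Delta^{p}$ together with the degenerate part; since $\chi(\Delta^{p})-\chi(\partial\Delta^{p})=(-1)^{p}$, additivity of $\chi$ along these closed cofibrations contributes exactly $(-1)^{p}\chi(N^{\mathrm{nd}}\A_{p})$ at stage $p$, and the finiteness condition on $\A$ (no unbounded chains of non-identity morphisms with nonempty hom-spaces, equivalently $M$ nilpotent) makes the alternating sum finite. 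Equivalently, one may run the homology spectral sequence of $N\A$ with normalized $E^{1}$-page $E^{1}_{p,q}=H_{q}(N^{\mathrm{nd}}\A_{p})$ and take alternating sums of dimensions.

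For the combinatorial side, using that $|-|$ is a monoid homomorphism into $(\Z,\cdot,1)$ (hence $\chi$ is multiplicative on products and additive on disjoint unions) I compute
$$\chi\bigl(N^{\mathrm{nd}}\A_{p}\bigr)\;=\;\sum_{a_0,\dots,a_p}\ \prod_{i=1}^{p}M(a_{i-1},a_i)\;=\;u^{*}M^{p}u,$$
so that, substituting into the formula above and summing the Neumann series (a finite sum by nilpotency of $M$, or convergent under the weaker hypotheses),
$$\chi(B\A)\;=\;\sum_{p\ge 0}(-1)^{p}u^{*}M^{p}u\;=\;u^{*}\Bigl(\sum_{p\ge 0}(-M)^{p}\Bigr)u\;=\;u^{*}(I+M)^{-1}u\;=\;u^{*}\xi^{-1}u,$$
and the right-hand side is exactly $|\xi|=\chi(\A)$. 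This yields $\chi(\A)=\chi(B\A)$.

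The main obstacle is the topological input: pinning down the precise class of ``certain'' topological categories for which $B\A$ genuinely admits the cell decomposition indexed by the spaces $N^{\mathrm{nd}}\A_{p}$, for which each $N^{\mathrm{nd}}\A_{p}$ has a well-defined Euler characteristic, for which identities sit nondegenerately inside the endomorphism spaces, and for which some grading bounds the length of nonidentity chains so the alternating sum terminates. The face category $C(X)$ of a (cellular-normal) cellular stratified space meets these requirements because a nonidentity morphism strictly raises cell dimension, so $M$ is strictly triangular and nilpotent; once the framework is in place, the matrix computation above is routine and parallels Leinster's treatment of finite ordinary categories.
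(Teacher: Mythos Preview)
Your proposal is correct and follows essentially the same line as the paper: both identify $\chi(B\A)$ with the alternating sum $\sum_{p\ge0}(-1)^{p}\chi(\overline{N}_{p}\A)$ over nondegenerate nerve spaces and match this against the M\"obius/Neumann-series expansion $u^{*}\xi^{-1}u$ for the magnitude.

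The main difference is in scope and packaging. In the paper ``certain'' means \emph{finite measurable acyclic}: each endomorphism space $\A(a,a)$ is a single point and nonempty hom-spaces define a genuine partial order on $\A_{0}$. Under this hypothesis your technical guesses become automatic --- $M=\xi-I$ is strictly triangular hence nilpotent, $\A^{\bullet}(a,a)=\emptyset$, and the degeneracy maps of $N\A$ are closed inclusions --- and the paper exploits this concreteness: rather than invoking Reedy cofibrancy and latching objects, it uses the homeomorphism $B\A\cong||\overline{N}\A||$ with the fat realization of the $\Delta$-space of strictly increasing chains, reads off an explicit CW structure with cells $\tau^{j}\times\sigma^{k_{j}}\times\cdots\times\sigma^{k_{1}}$, and computes the alternating cell count directly. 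On the combinatorial side the paper writes down an explicit coweighting $v(a)=\sum_{j}\sum_{a_{0}<\cdots<a_{j}=a}(-1)^{j}\prod\chi_{a_{i-1},a_{i}}$ rather than appealing to the inverse matrix. Your matrix formulation $u^{*}(I+M)^{-1}u$ is tidier and your Reedy/spectral-sequence argument is in principle more general (it would accommodate nontrivial endomorphism spaces with cofibrant identity inclusions), but on the acyclic class the two arguments are the same computation in different notation.
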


\begin{corollary}\label{main3}
Let $X$ be a certain cellular stratified space. Then, we have $\chi(X)=\chi(C(X))$ where $C(X)$ is the cylindrical face category of $X$.
\end{corollary}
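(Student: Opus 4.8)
The plan is to deduce the corollary directly from Theorem \ref{main2} applied to the cylindrical face category $C(X)$. First I would recall Tamaki's result from \cite{Tama}: for a suitably nice (e.g. CW, regular, totally normal) cellular stratified space $X$, the classifying space $BC(X)$ of the topological face category $C(X)$ is homotopy equivalent to $X$. Since the standard topological Euler characteristic is a homotopy invariant, this immediately gives $\chi(X) = \chi(BC(X))$. The phrase "a certain cellular stratified space" in the statement is exactly the hypothesis needed for this homotopy equivalence to hold, so I would make that precise by quoting the relevant theorem number from \cite{Tama} and checking that these spaces also fall under the class of "certain topological categories" to which Theorem \ref{main2} applies — in particular that $C(X)$ has finitely many objects (one per cell) and that the mapping spaces $C(X)(c,c')$ have well-defined topological Euler characteristics, which holds because for a cellular stratified space these mapping spaces are built from the attaching data and are finite CW complexes.

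The second step is to invoke Theorem \ref{main2} with $\A = C(X)$, which yields $\chi(C(X)) = \chi(BC(X))$. Combining the two displayed equalities gives $\chi(X) = \chi(BC(X)) = \chi(C(X))$, which is the desired conclusion.

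The main obstacle I anticipate is purely a matter of bookkeeping on the hypotheses: one must verify that the class of cellular stratified spaces for which Tamaki's homotopy equivalence $X \simeq BC(X)$ holds is contained in (or can be intersected with) the class of spaces whose face category $C(X)$ satisfies the finiteness and fibrancy-type conditions demanded by Theorem \ref{main2}. I would handle this by stating a single clean hypothesis on $X$ — for instance, that $X$ is a finite, regular, totally normal CW stratified space — under which both inputs are simultaneously valid, and remark that the $0$-cells of $X$ index the objects of $C(X)$ while the morphism spaces are the cylindrical structure maps recorded in Tamaki's construction, so the similarity matrix $\xi_{C(X)}$ is a finite integer matrix and hence has a magnitude. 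No genuinely new argument is needed beyond Theorem \ref{main2} and homotopy invariance of $\chi$; the content is in matching the hypotheses.
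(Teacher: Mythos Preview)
Your proposal is correct and follows exactly the paper's approach: the paper proves this as Theorem~\ref{facecat} by combining Tamaki's homotopy equivalence $X \simeq BC(X)$ (valid for finite \emph{locally polyhedral} cellular stratified spaces, Corollary~\ref{homotopy_equ}) with Theorem~\ref{classifying} ($=$ Theorem~\ref{main2}) applied to the acyclic topological category $C(X)$. One small correction: the objects of $C(X)$ are indexed by \emph{all} cells of $X$ (the face poset $P(X)$), not just the $0$-cells.
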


This paper is organized as follows. 
In Section 2, we give a review of enriched categories and model categories including the canonical model structure on the category of enriched categories.
In Section 3, we introduce the Euler characteristic of $\V$-enriched categories for a monoidal model category $\V$.
We give a proof of Theorem \ref{main}.
In Section 4, we focus on the case in which $\V$ is the category of topological spaces, and we state Theorem \ref{main2} and Corollary \ref{main3}.

\section{Review of enriched categories and model categories}

We first begin with a brief review of basic notions of enriched categories.
Let $(\V,\otimes,\mathbf{1})$ denote a monoidal category throughout this paper.
A $\V$-enriched category is a generalized idea of a category using hom-objects in $\V$
instead of hom-sets. See more details in \cite{Kel05}. 

\begin{definition}[Enriched category]
A $\V$-enriched category, or simply, a $\V$-category $\A$ consists of a set of objects $\A_{0}$ and 
a {\em hom-object} $\A(a,b)$ of $\V$ for each pair of objects $a$ and $b$ of $\A$ 
with a composition $\circ : \A(b,c) \otimes \A(a,b) \to \A(a,c)$ satisfying the coherence condition. We call $\A$ {\em finite} if $\A_{0}$ is finite.

A {\em $\V$-functor} $f : \A \to \B$ between two $\V$-categories $\A$ and $\B$ 
consists of a map $f : \A_{0} \to \B_{0}$ on objects and a morphism 
$\A(a,b) \to \B(fa,fb)$ in $\V$ for each pair of objects $a,b \in \A_{0}$ preserving the composition and identities.
\end{definition}

The following model structure on the category of small categories is called the folk model structure (see \cite{JT91}).
It is closely related to the model structure on the category of $\V$-categories.

\begin{definition}
A functor $p : E \to B$ between small categories is called an {\em isofibration} 
if for any isomorphism $f : p(e) \to b$ in $B$, there exists an isomorphism $g : e \to e'$ in $E$ such that $p(g)=f$.
On the other hand, a functor $i : X \to Y$ is called an {\em isocofibration} if it is injective on objects.
\end{definition}

\begin{theorem}[Folk model structure]
The category of small categories admits the following model structure called the folk model structure.
\begin{itemize}
\item The weak equivalences are equivalences of categories.
\item The fibrations are isofibrations.
\item The cofibrations are isocofibrations. 
\end{itemize}
\end{theorem}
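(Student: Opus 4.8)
The plan is to verify the model category axioms directly. The two-out-of-three property for equivalences of categories is classical. Closure under retracts is immediate for each of the three classes: an isofibration is exactly a functor with the right lifting property against the inclusion $\mathbf{1}\hookrightarrow\mathbf{I}$ of an object into the contractible groupoid $\mathbf{I}$ on two objects, so isofibrations are retract-closed; a retract of an injection of sets is an injection, so isocofibrations are retract-closed; and an equivalence of categories is precisely a fully faithful and essentially surjective functor, and each of these two conditions passes to retracts.

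The technical core is the description of the two classes of acyclic maps. First I would prove that a functor is both a fibration and a weak equivalence if and only if it is surjective on objects, full and faithful; both implications are short (a surjective, full, faithful functor is essentially surjective, hence an equivalence, and it lifts any given isomorphism because fullness produces a preimage that is itself invertible by full-faithfulness; conversely an equivalence is fully faithful, and an isofibration that is essentially surjective is surjective on objects). Call these functors \emph{trivial fibrations}. Second, a functor is both a cofibration and a weak equivalence if and only if it is injective on objects and an equivalence; call these \emph{trivial cofibrations}. A useful refinement, proved by choosing for each object not in the image an object of the source together with an isomorphism to it and then transporting morphisms via full-faithfulness, is that such an $i\colon X\to Y$ admits a retraction $r$ with $ri=\mathrm{id}_X$ and a natural isomorphism $\theta\colon ir\Rightarrow\mathrm{id}_Y$ whose component at each object in the image of $i$ is an identity.

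For the lifting axioms: a functor injective on objects lifts against a trivial fibration $p$ by choosing images of the missing objects using surjectivity of $p$, and then defining the lift on morphisms through the hom-set bijections coming from full-faithfulness of $p$, functoriality being forced by faithfulness. For the lift of a trivial cofibration $i\colon X\to Y$ against an isofibration $p\colon E\to B$ in a square with legs $u\colon X\to E$ and $v\colon Y\to B$, I would first establish a transport lemma: given $G\colon Y\to E$ and a natural isomorphism $\beta\colon pG\Rightarrow H$ that restricts to the identity on $X$, there is $G'\colon Y\to E$ with $G'|_X=G|_X$, with $pG'=H$, and a natural isomorphism $G\Rightarrow G'$ lying over $\beta$ and equal to the identity on $X$; one builds $G'$ on objects by lifting $\beta$ componentwise along the isofibration $p$ and on morphisms by conjugating $G$ by these liftings. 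Applying this to $G=u\circ r$ and $\beta=v\circ\theta$, which restricts to the identity on $X$ by the choice of $\theta$, produces the required strict lift.

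For the factorization axioms I would use two explicit constructions. To write a functor $f\colon X\to Y$ as a cofibration followed by a trivial fibration, let $\C$ be the category with object set $X_{0}\sqcup Y_{0}$ and $\C(a,b)=Y(\bar f a,\bar f b)$, where $\bar f$ is the evident map to $Y_{0}$; then $X\to\C$ is injective on objects and $\C\to Y$ is surjective on objects, full and faithful. To write $f$ as a trivial cofibration followed by a fibration, use the mapping path construction: form the pullback $\mathbf{P}=X\times_{Y}Y^{\mathbf{I}}$ of the evaluation $\mathrm{ev}_{0}\colon Y^{\mathbf{I}}\to Y$ along $f$, where $Y^{\mathbf{I}}$ is the category of isomorphisms of $Y$; the canonical functor $X\to\mathbf{P}$ is a section of the pullback of the trivial fibration $\mathrm{ev}_{0}$, hence is injective on objects and, by two-out-of-three, an equivalence, while the functor $\mathbf{P}\to Y$ induced by $\mathrm{ev}_{1}$ is an isofibration. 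With the two-out-of-three property, the retract closures, the two lifting properties, and these two factorizations in hand, all the model category axioms are verified. The step I expect to be the main obstacle is the lift of trivial cofibrations against isofibrations: the naive candidate $u\circ r$ agrees with $u$ along $X$ only up to a natural isomorphism lying over the identity, so one must set up the transport lemma in its relative form, fixed on $X$ and lying over a prescribed $\beta$, in order to obtain a genuine functor rather than a lift that is merely correct up to coherent isomorphism.
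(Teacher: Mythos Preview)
The paper does not prove this theorem; it is stated as background with a citation to \cite{JT91} and no argument is given. Your proposal is a correct outline of the standard direct verification of the model category axioms---the characterization of trivial fibrations as surjective-on-objects fully faithful functors, the transport lemma for lifting trivial cofibrations against isofibrations, and the mapping-cylinder and mapping-path factorizations are exactly the usual ingredients---so there is nothing to compare against in the paper itself.
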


When $\V$ is a nice monoidal model category \cite{Hov99},
the category $\V\Cat$ of small $\V$-categories also admits a model structure called canonical. 
This is a mixture of the model structure on $\V$ and the folk model structure on the category of small categories. 
In order to define the canonical model structure, we need the category of connected components 
$\pi_{0}\A$ of a $\V$-category $\A$.

\begin{definition}[Homotopy category]
Let $\V$ be a monoidal model category.
The homotopy category $\Ho(\V)$ is the category consisting of 
$\Ho(\V)_{0}=\V_{0}$ and $\Ho(\V)(X,Y)=\V(QRX,QRY)/\simeq$, 
where $QR$ is the cofibrant and fibrant replacement and $\simeq$ is the homotopy relation.
Note that the monoidal structure on $\V$ induces that of $\Ho(\V)$.
\end{definition}

Generally, the homotopy category $\Ho(\V)$ is defined as the localization $\gamma : \V \to \V[W^{-1}]=\Ho(\V)$ 
with respect to the class of weak equivalences $W$. 
It is determined uniquely up to equivalence of categories.

\begin{remark}[Theorem 1.2.10 of \cite{Hov99}]
If $X$ is cofibrant and $Y$ is fibrant in a model category $\V$,
then there exists a natural isomorphism $\Ho(\V)(X,Y) \cong \V(X,Y)/\simeq$.
\end{remark}

\begin{definition}[Connected component]
For an object $X$ in a monoidal model category $\V$, the \textit{connected components} $\pi_{0}X$ of $X$
is the set of morphisms $\Ho(\V)(\mathbf{1},X)$ in the homotopy category. Moreover, for a $\V$-category $\A$,
the \textit{category of connected components} $\pi_{0}\A$ of $\A$ is a small category whose set of objects is $\A_{0}$ and 
set of morphisms from an object $a$ to an object $b$ of $\A$ is $(\pi_{0}\A)(a,b)=\pi_{0}(\A(a,b))$.
\end{definition}

\begin{definition}
Let $\V$ be a monoidal model category.
\begin{enumerate}
\item We call a $\V$-functor $f : \A \to \B$ to be a {\em DK-equivalence} if $f : \A(x,y) \to \B(fx,fy)$
is a weak equivalence in $\V$ for any objects $x$ and $y$ of $\A$ and 
$\pi_{0}f : \pi_{0}\A \to \pi_{0}\B$
is an equivalence of categories.
\item We call a $\V$-functor $p : \A \to \B$ to be a {\em naive fibration} if $p : \A(x,y) \to \B(px,py)$
is a fibration in $\V$ for any objects $x$ and $y$ of $\A$ and 
$\pi_{0}p : \pi_{0}\A \to \pi_{0}\B$
is an isofibration of categories.
\end{enumerate}
\end{definition}

\begin{definition}[The canonical model structure]\label{canonical_model}
Let $\V$ be a monoidal model category. A model structure on the category of small $\V$-categories is called {\em canonical} 
if it has the following properties:
\begin{enumerate}
\item A $\V$-category $\A$ is fibrant if and only if $\A(a,b)$ is fibrant in $\V$ for any objects $a$ and $b$ of $\A$. 
\item A $\V$-functor $f : \A \to \B$ is a trivial fibration if and only if $f : \A(a,b) \to \B(fa,fb)$ is a fibration in $\V$ for any objects $a$ and $b$ of $\A$ and surjective on objects.
\item The weak equivalences are DK-equivalences. 
\item The fibrations between fibrant objects are naive fibrations.
\end{enumerate}
\end{definition}

The definition of the canonical model structure in \cite{BM13} only requires conditions 1 and 2 stated above.
However, to clarify the relation with DK-equivalences and naive fibrations, we use the above definition. 

A natural question is when $\V\Cat$ admits the canonical model structure.

\begin{example} Here are some examples for which the canonical model structure is known to exist.
\begin{itemize}
\item A standard category is a category enriched by the category of sets $(\Set, \times , *)$ equipped with the trivial model structure. The category of small categories $\Cat$ admits the canonical model structure that coincides with the folk model structure \cite{JT91}.
\item A topological category is a category enriched by the category of 
compactly generated weak Hausdorff spaces $(\Top, \times, *)$ equipped with the classical model structure.
The category of small topological categories $\Tcat$ admits the canonical model structure \cite{Amr}.
\item A 2-category is a category enriched by the category of small categories $(\Cat, \times, *)$ with the folk model structure \cite{JT91}. The category of 2-categories admits the canonical model structure \cite{Lac02}.
\item A simplicial category is a category enriched by the category of simplicial sets $(\Set^{\Delta^{\op}}, \times, *)$ equipped with the classical model structure. The category of simplicial categories admits the canonical model structure \cite{Ber07}.
\item A DG category over a ring $R$ is a category enriched by the category of chain complexes $(\Ch_{R}, \otimes, R)$ with the projective model structure. The category of DG-categories admits the canonical model structure \cite{Tab05}.
\end{itemize}
\end{example}

Berger and Moerdijk found a general condition on $\V$ for existence of the canonical model structure in Theorem 1.9 of \cite{BM13}.

\section{The Euler characteristic of enriched categories}

In this section, we focus on homotopical properties of the Euler characteristic.

\subsection{The Euler characteristic of $\V$-categories}

The Euler characteristic(magnitude) of $\V$-categories in \cite{Lei13} is constructed by a monoid homomorphism 
$$|-| : (\V_{0}/\cong, \otimes, \mathbf{1}) \to (k,\cdot,1)$$
from the set of isomorphism classes of objects to a ring $k$.
When $\V$ is a monoidal model category, 
we require the homomorphism to be an invariant with respect to weak equivalences.
In other words, we define the homomorphism as one from the set of isomorphism classes of objects in the homotopy category 
$\Ho(\V)$.

\begin{definition}
Let $\V$ be a monoidal model category, and
let $\W$ be a full subcategory of $\Ho(\V)$ closed under finite coproducts and direct summands. 
A {\em measure} of $\V$ is a homomorphism (preserving tensor products and coproducts)
$$|-| : (\W_{0}/\cong, \otimes, \mathbf{1}) \longrightarrow (k, \cdot , 1)$$
from the set of isomorphism classes of $\W$ to a ring $k$. We write it simply as $|-| : \W \to k$.
\end{definition}

\begin{definition}
For a monoidal model category $\V$ with a measure $|-| : \W \to k$, 
we call a $\V$-category $\A$ to be \textit{measurable on $\mathcal{W}$ by $|-|$} if $\A(a,b)$ belongs to $\W_{0}$ for any objects $a$ and $b$ of $\A$.
\end{definition}

\begin{definition}
Let $\V$ be a monoidal model category with a measure, 
and let $\A$ be a finite measurable $\V$-category.
\begin{enumerate}
\item The {\em similarity matrix} of $\A$ is the function 
$\xi : \A_{0} \times \A_{0} \to k$ given by $\xi(a,b)=|\A(a,b)|$.
\item Let $u$ denote the column vector with $u(a)=1$ for any object $a$ of $\A$.
A weighting on $\A$ is a column vector $w:\A_0\to k$ such that $\xi w=u$. 
Dually, a coweighting on $\A$ is a row vector $v:\A_0\to k$ such that $v \xi = u^{*}$.
\end{enumerate}
\end{definition}

Note that we have
$$\sum_{i \in \A_{0}}w_{i} = u^{*} w = v \xi w= v u=\sum_{j \in \A_{0}} v_{j}$$
if both a weighting and a coweighting exist. 
Moreover, 
$$\sum_{i} w_{i}= u^{*} w = w^{*} u = w^{*} \xi w' = u^{*} w' = \sum_{i} w'_{i}$$
for two (co)weightings $w$ and $w'$ on $\V$, and the equality guarantees the following definition.

\begin{definition}
Let $\V$ be a monoidal model category with a measure, and let $\A$ be a finite measurable $\V$-category. 
We say that $\A$ \textit{admits Euler characteristic} if it has 
both a weighting $w$ and a coweighting $v$ on $\A$. 
Then the \textit{Euler characteristic} of $\A$ is defined by
$$\chi(\A) = \sum_{i}w(i) = \sum_{j}v(j).$$
\end{definition}

\begin{remark}\label{inversion}
Let $\V$ be a monoidal model category with a measure, and
let the similarity matrix $\xi$ of a finite measurable $\V$-category $\A$ 
have an inverse matrix $\xi^{-1} : \A_{0} \times \A_{0} \to k$. Then there exist 
a weighting $w_{b}=\sum_{a \in \A_{0}} \xi^{-1}(a,b)$ and a coweighting $v_{a}=\sum_{b \in \A_{0}}\xi^{-1}(a,b)$.
Hence, $\A$ admits Euler characteristic, and we have $\chi(\A)= \sum_{a,b \in \A_{0}} \xi^{-1}(a,b)$.
\end{remark}

\begin{lemma}\label{coprod}
For an initial object $\phi$ of a monoidal model category $\V$ with a measure
$|-| : \W \to k$, we have $|\phi|=0$.
\end{lemma}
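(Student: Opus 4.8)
The plan is to use the fact that the measure $|-|$ is a monoid homomorphism on $(\W_0/\!\cong, \otimes, \mathbf{1})$ that also preserves coproducts, and to exploit the universal property of the initial object $\phi$. First I would observe that the empty coproduct in $\W$ is precisely the initial object $\phi$ (since $\W$ is closed under finite coproducts inside $\Ho(\V)$, and the excerpt presupposes $\phi \in \W_0$ via the statement itself). Since $|-|$ preserves coproducts, it sends the empty coproduct to the empty sum in $k$, which is $0$; hence $|\phi| = 0$. That is essentially the whole argument.

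If instead one wants to avoid invoking ``empty coproduct'' directly, the alternative route is algebraic: for any object $X$ of $\W$, the canonical map $\phi \sqcup X \to X$ is an isomorphism in $\Ho(\V)$ (this is the defining property of an initial object), so $|\phi \sqcup X| = |X|$ in $k$. By additivity of the measure on coproducts, $|\phi| + |X| = |X|$, and subtracting $|X|$ from both sides in the ring $k$ yields $|\phi| = 0$. Taking $X = \mathbf{1}$ (or any object of $\W$) suffices; here one should note $\phi \sqcup X$ lies in $\W_0$ because $\W$ is closed under finite coproducts, so the computation takes place within the domain of $|-|$.

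The only mild subtlety — and the step I would be most careful about — is making sure the relevant coproducts actually live in $\W$ so that $|-|$ can be applied to them; this is exactly what the ``closed under finite coproducts'' hypothesis in the definition of a measure guarantees, together with the standing assumption in the lemma that $\phi$ itself is an object of $\W$. Beyond that, everything is formal: there is no homotopy-theoretic input needed, only the monoid/coproduct-homomorphism property of $|-|$ and the universal property of $\phi$ in $\Ho(\V)$. I would write the second (algebraic) version in the paper, as it is the most transparent and makes explicit where each hypothesis is used.
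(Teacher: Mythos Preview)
Your proposal is correct, and the second (algebraic) route you describe is exactly the paper's proof: the paper observes $X \coprod \phi \cong X$, applies coproduct-preservation of the measure to get $|X| + |\phi| = |X|$, and subtracts. The only extra remark the paper makes is that isomorphisms in $\V$ are weak equivalences (to justify that the isomorphism $X \coprod \phi \cong X$ passes to $\Ho(\V)$), which you implicitly handle by stating the isomorphism directly in $\Ho(\V)$.
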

\begin{proof}
The universality of coproducts shows that $X \coprod \phi \cong X$.
Since the measure preserves finite coproducts, we have
$|X|+|\phi|=|X \coprod \phi|=|X|$. Note that every isomorphism in a model category is a weak equivalence (see the middle of p. 6 of \cite{Hov99}). Thus, $|\phi|=0$.
\end{proof}

The category of $\V$-categories also admits a monoidal structure if 
$\V$ is symmetric.
The tensor product $\C \otimes \D$ of two $\V$-categories $\C$ and $\D$ is given by $(\C \otimes \D)_{0}=\C_{0} \times \D_{0}$ and 
$$(\C \otimes \D)((c_{1},d_{1}),(c_{2},d_{2})) = \C(c_{1}, c_{2}) \otimes \D(d_{1},d_{2})$$
for objects $c_1$ and $c_2$ of $\C$ and objects $d_1$ and $d_2$ of $\D$.

\begin{proposition}
Let $\V$ be a monoidal model category with a measure.
Suppose that $\A_{1},\A_{2},\dots,\A_{n}$ are $\V$-categories admitting Euler characteristics. Then, 
\begin{enumerate}
\item $\chi( \otimes_{i} \A_{i})= \prod_{i} \A_{i}$ when $\V$ is a symmetric monoidal model category. \\
\item $\chi(\coprod_i \A_{i})= \sum_i \chi(\A_{i})$.
\end{enumerate}
\end{proposition}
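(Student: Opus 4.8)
The plan is to reduce both statements to the two-factor case and then verify the defining equations for (co)weightings directly: for part~(1) one uses that the measure is a monoid homomorphism, and for part~(2) that it sends the initial object to $0$. Throughout, I would first note that a tensor product (resp.\ coproduct) of finitely many finite measurable $\V$-categories is again finite and measurable — for the tensor product this uses that $\W$ is closed under $\otimes$, which is implicit in the definition of a measure since $(\W_{0}/\cong,\otimes,\mathbf{1})$ is required to be a monoid — so the Euler characteristics appearing in the statement are defined once we exhibit the relevant (co)weightings. (I would also prove (1) in the form $\chi(\otimes_{i}\A_{i})=\prod_{i}\chi(\A_{i})$, correcting the evident typo.)

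For part~(1), by induction on $n$ it suffices to treat $\C\otimes\D$ for two $\V$-categories $\C,\D$ admitting Euler characteristics. The key point is that the similarity matrix of $\C\otimes\D$ is the Kronecker product of those of $\C$ and $\D$:
$$\xi_{\C\otimes\D}\big((c_{1},d_{1}),(c_{2},d_{2})\big)=|\C(c_{1},c_{2})\otimes\D(d_{1},d_{2})|=|\C(c_{1},c_{2})|\cdot|\D(d_{1},d_{2})|=\xi_{\C}(c_{1},c_{2})\,\xi_{\D}(d_{1},d_{2}),$$
using that $|-|$ preserves tensor products. Given weightings $w^{\C},w^{\D}$ and coweightings $v^{\C},v^{\D}$, I would then check that $w(c,d):=w^{\C}(c)\,w^{\D}(d)$ is a weighting and $v(c,d):=v^{\C}(c)\,v^{\D}(d)$ a coweighting on $\C\otimes\D$; this is immediate from the Kronecker structure, since $(\xi_{\C}\otimes\xi_{\D})(w^{\C}\otimes w^{\D})=(\xi_{\C}w^{\C})\otimes(\xi_{\D}w^{\D})=u_{\C_{0}}\otimes u_{\D_{0}}=u_{(\C\otimes\D)_{0}}$. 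Hence $\C\otimes\D$ admits Euler characteristic and $\chi(\C\otimes\D)=\sum_{(c,d)}w^{\C}(c)w^{\D}(d)=\big(\sum_{c}w^{\C}(c)\big)\big(\sum_{d}w^{\D}(d)\big)=\chi(\C)\,\chi(\D)$, and the induction finishes the claim.

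For part~(2), I would first recall the description of the coproduct in $\V\Cat$: its object set is $\coprod_{i}(\A_{i})_{0}$, the hom-object between two objects lying in the same summand $\A_{i}$ is $\A_{i}(a,b)$, and the hom-object between objects of distinct summands is the initial object $\phi$ of $\V$ (this uses that $\otimes$ preserves the initial object, exactly as in Lemma~\ref{coprod}). Since $|\phi|=0$ by Lemma~\ref{coprod}, the similarity matrix $\xi$ of $\coprod_{i}\A_{i}$ is block-diagonal with blocks $\xi_{\A_{1}},\dots,\xi_{\A_{n}}$. Concatenating weightings $w^{\A_{i}}$ (resp.\ coweightings $v^{\A_{i}}$) of the $\A_{i}$ then yields a weighting (resp.\ coweighting) on $\coprod_{i}\A_{i}$ by block-diagonality, so $\coprod_{i}\A_{i}$ admits Euler characteristic and $\chi(\coprod_{i}\A_{i})=\sum_{i}\sum_{a\in(\A_{i})_{0}}w^{\A_{i}}(a)=\sum_{i}\chi(\A_{i})$.

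The only genuine subtlety — the main obstacle — is the bookkeeping about the ambient structures rather than the linear algebra: confirming that $\W$ is closed under $\otimes$ so that $\C\otimes\D$ is measurable, and that the cross-summand hom-objects of a coproduct of $\V$-categories really are the initial object of $\V$. Once these points are in place, both parts come down to routine computations with Kronecker products and block-diagonal matrices.
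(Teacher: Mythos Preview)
Your proposal is correct and is precisely the argument the paper has in mind: the paper's proof consists entirely of the sentence ``This is shown similarly to Proposition~2.6 in~\cite{Lei08a}, since the measure preserves tensor products and coproducts,'' and Leinster's Proposition~2.6 is exactly the Kronecker-product/block-diagonal computation you have written out. Your remarks on the typo and on the closure of $\W$ under $\otimes$ are apt but do not change the underlying approach.
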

\begin{proof}
This is shown similarly to Proposition 2.6 in \cite{Lei08a},
since the measure preserves tensor products and coproducts.
\end{proof}

\subsection{Weak equivalences and the Euler characteristic}

We examine relations between the Euler characteristics of $\V$-categories and the canonical model structure 
on the category of $\V$-categories.

\begin{definition}
Let $\V$ be a monoidal model category.
Two objects $a$ and $b$ in a $\V$-category $\A$ 
are \textit{weakly equivalent} if they are isomorphic in the category of connected components $\pi_{0}\A$ of $\A$. 
\end{definition}

\begin{lemma}\label{weakly_equivalent}
Let $\V$ be a monoidal model category.
For two weakly equivalent objects $a$ and $b$ of a $\V$-category $\A$, the two hom-objects 
$\A(a,c)$ and $\A(b,c)$ (resp. $\A(c,a)$ and $\A(c,b)$) are weakly equivalent to each other in $\V$ for any object $c$  of $\A$.
\end{lemma}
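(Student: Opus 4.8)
The statement is the standard fact that, in an enriched category, isomorphic objects have isomorphic hom-objects; the only work is to move into the right monoidal category. I would do this by change of base along the localization $\gamma\colon\V\to\Ho(\V)$, which, as noted above, is compatible with the monoidal structures: the $\V$-category $\A$ then yields a category $\gamma_{*}\A$ enriched over $\Ho(\V)$, with $(\gamma_{*}\A)(x,y)=\A(x,y)$ viewed as an object of $\Ho(\V)$. Its underlying ordinary category has hom-sets $\Ho(\V)(\mathbf{1},\A(x,y))=\pi_{0}(\A(x,y))$, hence is exactly $\pi_{0}\A$, so the hypothesis provides morphisms $f\colon\mathbf{1}\to\A(a,b)$ and $g\colon\mathbf{1}\to\A(b,a)$ in $\Ho(\V)$ that are mutually inverse for the composition of $\gamma_{*}\A$.

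For any object $c$ I would then form, in $\Ho(\V)$, the ``precomposition by $f$'' morphism
$$f^{*}\colon\A(b,c)\xrightarrow{\cong}\A(b,c)\otimes\mathbf{1}\xrightarrow{\mathrm{id}\otimes f}\A(b,c)\otimes\A(a,b)\xrightarrow{\circ}\A(a,c),$$
where $\circ$ is the composition of $\gamma_{*}\A$, and symmetrically a morphism $g^{*}\colon\A(a,c)\to\A(b,c)$ from $g$. Standard manipulations --- the associativity and unit axioms of the enriched category $\gamma_{*}\A$ together with naturality of the coherence isomorphisms of $\Ho(\V)$ --- identify $f^{*}g^{*}$ with $\mathrm{id}_{\A(a,c)}$ and $g^{*}f^{*}$ with $\mathrm{id}_{\A(b,c)}$, since $f$ and $g$ are mutually inverse in $\pi_{0}\A$. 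Hence $f^{*}$ is an isomorphism in $\Ho(\V)$, so $\A(a,c)$ and $\A(b,c)$ are weakly equivalent in $\V$. The parenthetical statement follows from the same argument applied to the ``postcomposition'' morphisms $\A(c,a)\to\A(c,b)$ and $\A(c,b)\to\A(c,a)$ built from $f$ and $g$.

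I do not expect a genuine obstacle: the core is the elementary manipulation of the composition morphism, valid in any monoidal category. The one point that deserves care is the identification in the first step of $\pi_{0}\A$ with the underlying ordinary category of the $\Ho(\V)$-category $\gamma_{*}\A$; this is the routine change-of-base computation and uses only that the monoidal structure of $\V$ descends to $\Ho(\V)$.
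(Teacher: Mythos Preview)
Your argument is correct and is essentially the same as the paper's: both pass to $\Ho(\V)$, pick a representative $\alpha\colon\mathbf{1}\to\A(a,b)$ of the isomorphism in $\pi_{0}\A$, and show that the composite $\A(b,c)\cong\A(b,c)\otimes\mathbf{1}\xrightarrow{1\otimes\alpha}\A(b,c)\otimes\A(a,b)\xrightarrow{\circ}\A(a,c)$ is an isomorphism in $\Ho(\V)$. You are simply more explicit than the paper, which asserts this last point without writing down the inverse; your change-of-base framing and the verification that $f^{*}g^{*}$ and $g^{*}f^{*}$ are identities supply exactly the details the paper leaves to the reader.
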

\begin{proof}
In the category of connected components $\pi_{0}\A$, there exists an isomorphism $[\alpha]$ of $\pi_{0}\A(a,b)$.
This is the homotopy class of a morphism $\alpha : \mathbf{1} \to \A(a,b)$ in $\Ho(\V)$.
The following composition 
$$\A(b,c) \cong \A(b,c) \otimes \mathbf{1} \stackrel{1 \otimes \alpha}{\longrightarrow} 
\A(b,c) \otimes \A(a,b) \stackrel{\circ}{\longrightarrow} \A(a,c)$$
is an isomorphism in $\Ho(\V)$.
\end{proof}

The following proposition can be shown similarly to Lemma 1.12 in \cite{Lei08a}.

\begin{proposition}\label{admitting}
Let $\V$ be a monoidal model category with a measure.
Suppose that two finite measurable $\V$-categories $\A$ and $\B$ are DK-equivalent.
Then, $\A$ admits a (co)weighting if and only if $\B$ does.
\end{proposition}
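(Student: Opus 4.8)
The plan is to reduce the statement to a comparison of similarity matrices up to the kind of operations that preserve the existence of a (co)weighting, exactly as in Leinster's argument for ordinary categories. First I would recall that a DK-equivalence $f : \A \to \B$ between finite measurable $\V$-categories induces an equivalence of categories $\pi_{0}f : \pi_{0}\A \to \pi_{0}\B$, and that any equivalence between finite categories can be analyzed through its skeleton. So the argument should proceed in two stages: (i) handle the case where $f$ is ``essentially surjective and fully faithful but injective on objects up to weak equivalence'' — that is, the case where $\pi_{0}f$ is an isomorphism onto a skeleton — and then (ii) reduce the general case to this one by collapsing each $\V$-category onto a choice of one object per weak equivalence class.

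For stage (ii), the key input is Lemma \ref{weakly_equivalent}: if $a$ and $b$ are weakly equivalent objects of $\A$, then $\A(a,c) \simeq \A(b,c)$ and $\A(c,a) \simeq \A(c,b)$ for every $c$, so since the measure $|-|$ factors through $\Ho(\V)$ and is invariant under weak equivalence, the rows indexed by $a$ and $b$ in the similarity matrix $\xi_{\A}$ are equal, and likewise the columns. Thus $\xi_{\A}$ is, up to a permutation, a ``blown up'' version of the similarity matrix of the full subcategory $\A'$ on a set of representatives: each representative's row/column is repeated according to the size of its weak equivalence class. I would then show directly that such a block-repetition of a matrix admits a weighting (resp. coweighting) if and only if the original does — given a weighting $w'$ on $\xi_{\A'}$, distribute each $w'(a)$ arbitrarily among the copies of $a$ (e.g. put it all on one copy, or split evenly) to get a weighting on $\xi_{\A}$; conversely, summing the entries of a weighting on $\xi_{\A}$ over each block gives a weighting on $\xi_{\A'}$. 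The same works dually for coweightings. Hence $\A$ admits a (co)weighting iff $\A'$ does, and similarly $\B$ admits one iff $\B'$ does.

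For stage (i), once we have passed to the skeletal representatives $\A'$ and $\B'$, the functor $\pi_{0}f$ restricts to an equivalence between the skeleta of $\pi_{0}\A$ and $\pi_{0}\B$; since a skeleton is unique up to isomorphism, $\A'_{0}$ and $\B'_{0}$ are in bijection, and the induced maps $\A'(a,a') \to \B'(fa,fa')$ are weak equivalences (this is the DK condition on hom-objects), so $|\A'(a,a')| = |\B'(fa,fa')|$, i.e. $\xi_{\A'} = \xi_{\B'}$ after relabeling by this bijection. A matrix and any relabeling of it by a bijection of index sets have literally the same weightings and coweightings up to the corresponding permutation, so $\A'$ admits a (co)weighting iff $\B'$ does. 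Chaining the three equivalences ($\A \leftrightarrow \A' \leftrightarrow \B' \leftrightarrow \B$) gives the result.

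The main obstacle I expect is stage (ii): making precise and correct the claim that the similarity matrix of $\A$ is a block-repetition of that of $\A'$, and that block-repetition preserves the existence of (co)weightings. One has to be careful that weak equivalence of objects in $\pi_{0}\A$ really does force equality (not merely some relation) of the corresponding rows and columns of $\xi_{\A}$ — this is where the hypothesis that $|-|$ is defined on $\Ho(\V)$, together with Lemma \ref{weakly_equivalent}, is essential, and it is also why we needed the measure to be genuinely homotopy-invariant rather than merely iso-invariant. The converse direction of the block-repetition lemma (summing over blocks) also needs the elementary but slightly fiddly check that if $\xi_{\A} w = u$ then the block-sums $\bar w$ satisfy $\xi_{\A'} \bar w = u$, using that within a block all rows of $\xi_{\A}$ agree. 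Everything else is bookkeeping of the same flavor as Lemma 1.12 of \cite{Lei08a}.
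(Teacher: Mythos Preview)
Your proposal is correct and takes essentially the same route as the paper: both hinge on Lemma~\ref{weakly_equivalent} to show that weakly equivalent objects index identical rows and columns of the similarity matrix, and then transfer a (co)weighting across the equivalence $\pi_{0}\A \simeq \pi_{0}\B$ by rescaling according to the sizes of weak equivalence classes. The paper compresses your two stages into a single explicit formula, $w_{\A}(a) = \bigl(C(fa)/C(a)\bigr)\, w_{\B}(fa)$ where $C(-)$ denotes the cardinality of the weak equivalence class, rather than passing through skeletal intermediaries $\A'$ and $\B'$; your factorization is slightly longer but has the minor advantage of isolating the block-repetition step as a standalone linear-algebraic lemma.
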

\begin{proof}
Suppose that $\B$ has a weighting $w_{\B}$. 
There exists an equivalence of categories $f : \pi_{0}(\A) \to \pi_{0}(\B)$ between the categories of connected components.
By Lemma \ref{weakly_equivalent}, the similarity matrix $\xi_{\A}$ on $\A$ satisfies 
$\xi_{\A}(a,c)=\xi_{\A}(b,c)$ and $\xi_{\A}(c,a)=\xi_{\A}(c,b)$ for weakly equivalent objects $a,b$ and any object $c$ in $\A$.
Let $C(a)$ denote the number of objects of the weak equivalence class of $a$ of $\A$, similarly $C(b)$ for $b \in \B_{0}$.
A weighting $w_{\A}$ on $\A$ is given by
$$w_{\A}(a)= \left(C(fa)/C(a) \right) w_{\B}(fa).$$
\end{proof}

\begin{theorem}
Let $\V$ be a monoidal model category with a measure.
Suppose that two finite measurable $\V$-categories $\A$ and $\B$ are DK-equivalent.
Then, their Euler characteristics are equal $\chi(\A)=\chi(\B)$ if $\A$ or $\B$ admits Euler characteristic.
\end{theorem}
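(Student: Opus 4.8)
The plan is to combine Proposition \ref{admitting} with the key invariance property of the magnitude/Euler characteristic: whenever both a weighting and a coweighting exist, the Euler characteristic can be computed from \emph{either} one alone, and its value does not depend on the particular (co)weighting chosen. So the first step is to reduce to the case where both $\A$ and $\B$ admit Euler characteristic. Indeed, suppose $\A$ admits Euler characteristic, so it has a weighting $w_\A$ and a coweighting $v_\A$. By Proposition \ref{admitting} (applied once for weightings and once for coweightings), $\B$ admits a weighting $w_\B$ and a coweighting $v_\B$ as well; hence $\B$ admits Euler characteristic, and by the remarks preceding the definition of $\chi$, both $\chi(\A)$ and $\chi(\B)$ are well defined. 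The symmetric argument handles the case where $\B$ is assumed to admit Euler characteristic.

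The second step is to compare the actual values. Here I would invoke the explicit formula for the transported weighting from the proof of Proposition \ref{admitting}: if $f : \pi_0(\A) \to \pi_0(\B)$ is the equivalence of categories and $w_\B$ is a weighting on $\B$, then
$$w_\A(a) = \bigl(C(fa)/C(a)\bigr)\, w_\B(fa)$$
is a weighting on $\A$, where $C(a)$ (resp.\ $C(b)$) counts the objects in the weak equivalence class of $a$ in $\A$ (resp.\ of $b$ in $\B$). Then I would compute $\chi(\A) = \sum_{a \in \A_0} w_\A(a)$ by grouping the objects $a$ of $\A$ according to their weak equivalence classes. Since $f$ induces a bijection on weak equivalence classes and $w_\B$ is constant on each such class (by Lemma \ref{weakly_equivalent}, $\xi_\B$ has equal rows/columns on weakly equivalent objects, so any weighting may be taken constant on classes — or one simply notes $w_\B(fa)$ depends only on the class), summing $\bigl(C(fa)/C(a)\bigr) w_\B(fa)$ over the $C(a)$ objects $a$ in a fixed class yields exactly $C(fa)\, w_\B(fa)$, which is the sum of $w_\B$ over the corresponding class of $\B$. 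Summing over all classes gives $\chi(\A) = \sum_{b \in \B_0} w_\B(b) = \chi(\B)$.

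The main obstacle I anticipate is the point glossed over above: one must know that a weighting $w_\B$ can be chosen to be constant on weak equivalence classes, or equivalently that $w_\B(fa)$ really depends only on the class of $a$, so that the grouping-and-summing step goes through cleanly. This follows from Lemma \ref{weakly_equivalent}: weakly equivalent objects have identical rows and columns in the similarity matrix, so if $w_\B$ is any weighting, replacing its value on a class by the average over that class is still a weighting; since $\chi(\B)$ is independent of the choice of weighting, we may assume $w_\B$ is class-constant from the start. With that in hand the computation is the routine bookkeeping sketched above, and the theorem follows. (Alternatively, one can avoid transported weightings entirely and argue via the coweighting version, or observe that the value $\sum_i w_\A(i)$ is forced once a coweighting exists, reducing everything to a single application of Proposition \ref{admitting} in each direction; I would present whichever is shortest given the conventions already fixed in the paper.)
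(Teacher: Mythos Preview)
Your proof is correct, but the paper takes a shorter route that avoids the class-constancy manoeuvre entirely. After invoking Proposition \ref{admitting} to ensure both $\A$ and $\B$ admit Euler characteristic (as you do), the paper picks a weighting $w_\A$ on $\A$ and a \emph{coweighting} $v_\B$ on $\B$, together with the equivalence $f:\pi_0\A\to\pi_0\B$ and its inverse $g$. The key identity is $\xi_\A(a,gb)=\xi_\B(fa,b)$, obtained from the DK-equivalence on hom-objects and Lemma \ref{weakly_equivalent}. One then inserts $1=\sum_b v_\B(b)\xi_\B(fa,b)$ into $\chi(\A)=\sum_a w_\A(a)$, swaps $\xi_\B(fa,b)$ for $\xi_\A(a,gb)$, and collapses $\sum_a \xi_\A(a,gb)w_\A(a)=1$ to reach $\chi(\B)=\sum_b v_\B(b)$.

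Your argument instead tracks the explicit transported weighting from the proof of Proposition \ref{admitting} and sums it classwise. This is perfectly valid, and your averaging step (replacing $w_\B$ by its class-average, which remains a weighting because weakly equivalent objects have identical rows and columns in $\xi_\B$) correctly handles the obstacle you flagged. The trade-off: your route is more explicit and reuses the construction already in hand, while the paper's weighting/coweighting sandwich is cleaner and needs no special choice of $w_\B$. Your closing parenthetical (``the value $\sum_i w_\A(i)$ is forced once a coweighting exists'') is essentially the insight the paper exploits directly.
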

\begin{proof}
There exists an equivalence of categories $f: \pi_{0}\A \to \pi_{0}\B$ and 
the inverse $g : \pi_{0}\B \to \pi_{0}\A$. For any pair of objects $(a,b)$  of $\A \times \B$, we have
$$\xi_{\A}(a,gb)= \xi_{\B}(fa,fgb)=\xi_{\B}(fa,b).$$
By Proposition \ref{admitting}, both $\A$ and $\B$ admit Euler characteristic.
Let $w_{\A}$ be a weighting on $\A$ and $v_{\B}$ be a coweighting on $\B$. Then we have
\begin{equation}
\begin{split}
\chi(\A) &=\sum_{a \in \A_{0}} w_{\A}(a) \\ \notag
&= \sum_{a \in \A_{0}} \sum_{b \in \B_{0}}v_{\B}(b) \xi_{\B}(fa,b) w_{\A}(a) \\
&=\sum_{a \in \A_{0}} \sum_{b \in \B_{0}}v_{\B}(b) \xi_{\A}(a,gb) w_{\A}(a) \\
&=\sum_{b \in \B_{0}}v_{\B}(b) \\
&=\chi(\B). \\
\end{split}
\end{equation}
\end{proof}

\subsection{Fibrations and the Euler characteristic}

Another important homotopical property of the Euler characteristic 
is the product formula with respect to fibrations. 
In the category of topological spaces, 
a certain fibration $p : E \to B$ over a connected base $B$ with fiber $F$ induces the equation $\chi(E)=\chi(B) \chi(F)$.
When $B$ has connected components $B_{i}$, the above equation is generally extended as the following form
$$\chi(E) = \sum_{i \in \pi_{0}B} \chi(B_{i})\chi(F_{i}),$$
where $F_{i}$ is the fiber over a point of $B_{i}$.
We focus on the relation between the Euler characteristic of $\V$-categories 
and naive fibrations.

\begin{definition}
Let $X$ be an object in a monoidal model category $\V$.
We call $X$ {\em connected} if the set of connected components $\pi_{0}X=\Ho(\V)(\mathbf{1},X)$ is a single point.
On the other hand, a small category $\C$ is called {\em connected} when there exists a zigzag sequence of morphisms 
$$x \longrightarrow x_{1} \longleftarrow x_{2} \longrightarrow \cdots \longrightarrow x_{n-1} \longleftarrow y$$
starting at $x$ and ending at $y$ for any two objects $x$ and $y$ of $\C$.
We call a $\V$-category $\A$ {\em connected} if the category $\pi_{0}\A$ of connected components is connected.
Moreover, we call $\A$ {\em strongly connected} if $\A$ is connected and every hom-object $\A(x,y)$ is connected in $\V$.
\end{definition}

\begin{assumption}\label{assumption_V}
Assume that our monoidal model category $\V$ satisfies the following conditions:
\begin{enumerate}
\item $\V$ is cartesian with the cofibrant unit.
\item The set of connected components $\pi_{0}(X)$ is empty if and only if $X$ is the initial object $\phi$ in $\V$.
\item The category of $\V$-categories admits the canonical model structure (Definition \ref{canonical_model}).
\end{enumerate}
Regarding the first condition, a cartesian monoidal category is a monoidal category whose tensor product $\otimes$ coincides with the categorical product.
Hence, unit object $\mathbf{1}$ is the terminal object. In particular, $\V$ is symmetric.
\end{assumption}

\begin{lemma}
Let $\V$ be a monoidal model category satisfying Assumption \ref{assumption_V}. 
The unit $\mathbf{1}$ of $\V$ is connected.
\end{lemma}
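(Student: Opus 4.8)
The plan is to show that the connected components set $\pi_0(\mathbf{1}) = \Ho(\V)(\mathbf{1},\mathbf{1})$ is a single point. The key observation is that under Assumption \ref{assumption_V}, $\V$ is cartesian with cofibrant unit, so $\mathbf{1}$ is the terminal object of $\V$, and $\mathbf{1}$ is cofibrant by hypothesis. First I would pass to a fibrant replacement $R\mathbf{1}$ of $\mathbf{1}$; since $\mathbf{1} \to R\mathbf{1}$ is a trivial cofibration and $\mathbf{1}$ is already cofibrant, $R\mathbf{1}$ is both cofibrant and fibrant, and it is weakly equivalent to $\mathbf{1}$. By the Remark following the definition of the homotopy category (Theorem 1.2.10 of \cite{Hov99}), $\Ho(\V)(\mathbf{1}, R\mathbf{1}) \cong \V(\mathbf{1}, R\mathbf{1})/\simeq$.

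Next I would argue that $\V(\mathbf{1}, R\mathbf{1})$ is a single point. Since $R\mathbf{1}$ is weakly equivalent to the terminal object $\mathbf{1}$, one might hope $R\mathbf{1}$ is itself terminal, but that need not hold on the nose; instead the cleaner route is to use that the terminal object $\mathbf{1}$ satisfies $\V(X,\mathbf{1}) = *$ for every $X$, hence in particular $\V(\mathbf{1},\mathbf{1})$ is a single point, and this point is its own homotopy class, so $\Ho(\V)(\mathbf{1},\mathbf{1}) \cong \V(\mathbf{1},\mathbf{1})/\simeq$ has exactly one element once we know $\mathbf{1}$ is both cofibrant and fibrant. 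The cofibrancy is given; fibrancy of $\mathbf{1}$ as the terminal object is automatic because the map $\mathbf{1} \to *$ (to the terminal object) is the identity, and in any model category the terminal object is fibrant. Therefore $\mathbf{1}$ is a cofibrant-fibrant object, so the Remark applies directly with $X = Y = \mathbf{1}$, giving $\pi_0(\mathbf{1}) = \Ho(\V)(\mathbf{1},\mathbf{1}) \cong \V(\mathbf{1},\mathbf{1})/\simeq = */\simeq = *$.

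The main obstacle, such as it is, lies in the bookkeeping about which replacements are needed: one must be careful that $\mathbf{1}$ being cofibrant is exactly what lets us avoid taking a cofibrant replacement (which could in principle disturb the terminality argument), and that $\mathbf{1}$ being terminal is exactly what gives fibrancy for free. Once both properties are in hand, the computation $\V(\mathbf{1},\mathbf{1}) = *$ is immediate from the universal property of the terminal object, and quotienting a one-point set by any equivalence relation leaves a one-point set. Hence $\mathbf{1}$ is connected in the sense of the preceding definition.

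\begin{proof}
By Assumption \ref{assumption_V}(1), $\V$ is cartesian, so its unit $\mathbf{1}$ is the terminal object of $\V$; also $\mathbf{1}$ is cofibrant by hypothesis. The terminal object of any model category is fibrant (the map to the terminal object is the identity), so $\mathbf{1}$ is both cofibrant and fibrant. By the Remark following the definition of the homotopy category (Theorem 1.2.10 of \cite{Hov99}), there is a natural isomorphism
$$\Ho(\V)(\mathbf{1},\mathbf{1}) \cong \V(\mathbf{1},\mathbf{1})/\simeq.$$
Since $\mathbf{1}$ is terminal, $\V(\mathbf{1},\mathbf{1})$ is a single point, and hence so is its quotient by the homotopy relation. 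Therefore $\pi_{0}(\mathbf{1}) = \Ho(\V)(\mathbf{1},\mathbf{1})$ is a single point, i.e. $\mathbf{1}$ is connected.
\end{proof}
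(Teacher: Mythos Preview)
Your proof is correct and follows essentially the same approach as the paper: both use that $\mathbf{1}$ is terminal (hence fibrant, with $\V(\mathbf{1},\mathbf{1})=*$) together with the identification $\Ho(\V)(\mathbf{1},\mathbf{1})\cong\V(\mathbf{1},\mathbf{1})/\simeq$. Your version is slightly more explicit in invoking the cofibrancy of $\mathbf{1}$ from Assumption~\ref{assumption_V}(1) when applying Theorem~1.2.10 of \cite{Hov99}, which the paper leaves implicit.
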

\begin{proof}
Since $\mathbf{1}$ is the terminal object, this is fibrant and
the set of morphisms $\V(\mathbf{1},\mathbf{1})$ is a single point.
Then we have
$$\pi_{0}(\mathbf{1}) = \Ho(\V)(\mathbf{1},\mathbf{1}) \cong \V(\mathbf{1}, \mathbf{1})/\simeq = *.$$
\end{proof}

\begin{lemma}
Let $\V$ be a monoidal model category satisfying Assumption \ref{assumption_V}.
Any $\V$-category can be decomposed as $\A=\coprod_{i} \A_{i}$ for connected $\V$-categories $\A_{i}$.
\end{lemma}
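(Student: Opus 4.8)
The plan is to build the decomposition from the equivalence relation on objects generated by ``there exists a hom-object that is not initial.'' Concretely, I would define a relation $\sim$ on $\A_0$ by declaring $x \sim y$ whenever $\A(x,y) \neq \phi$ or $\A(y,x) \neq \phi$, and then take $\approx$ to be the equivalence relation generated by $\sim$. Let $\{S_i\}_{i \in I}$ be the equivalence classes; for each $i$ define the full $\V$-subcategory $\A_i$ of $\A$ spanned by the objects in $S_i$. The claim is then that $\A = \coprod_i \A_i$ and that each $\A_i$ is connected.

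The first step is to check that each $\A_i$ is a well-defined $\V$-category: this only requires that the composition morphisms $\A(b,c) \otimes \A(a,b) \to \A(a,c)$ restrict, which is automatic for a full subcategory, and that identities $\mathbf{1} \to \A(a,a)$ land inside, which they do. The second step is to verify the coproduct identity $\A = \coprod_i \A_i$ in $\V\Cat$. On objects this is the set-partition $\A_0 = \bigsqcup_i S_i$. On hom-objects, for $a \in S_i$ and $b \in S_j$ with $i \neq j$ we must have $\A(a,b) = \phi$, since otherwise $a \sim b$ and hence $a \approx b$, contradicting $i \neq j$; and the coproduct $\V$-category $\coprod_i \A_i$ is precisely the one whose hom-object is $\A_i(a,b)$ when $a,b$ lie in the same block and the initial object $\phi$ otherwise. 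Here I would invoke Assumption \ref{assumption_V}(1) so that $\phi$ genuinely behaves as the ``empty'' hom-object in the coproduct of $\V$-categories.

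The third step, which is the real content, is to show each $\A_i$ is connected, i.e.\ that $\pi_0 \A_i$ is a connected small category. Fix $a, b \in S_i$. By definition of the generated equivalence relation there is a finite chain $a = z_0, z_1, \dots, z_n = b$ in $S_i$ with $z_{k} \sim z_{k+1}$ for each $k$, meaning $\A(z_k, z_{k+1})$ or $\A(z_{k+1}, z_k)$ is not initial. By Assumption \ref{assumption_V}(2), a hom-object is non-initial exactly when its set of connected components $\pi_0$ is nonempty, so in each link there is a morphism $z_k \to z_{k+1}$ or $z_{k+1} \to z_k$ in $\pi_0 \A_i$. Stringing these together gives a zigzag in $\pi_0 \A_i$ from $a$ to $b$, which is exactly the definition of $\pi_0 \A_i$ being connected.

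I expect the main obstacle to be purely bookkeeping: making precise that the coproduct in $\V\Cat$ of $\V$-categories with disjoint object sets is formed by taking the given hom-objects within each summand and the initial object $\phi$ across summands, and that this matches $\A$ on the nose. This is where Assumption \ref{assumption_V}(1) (cartesian $\V$, so $\phi$ is a strict initial object absorbing products) and Assumption \ref{assumption_V}(2) (the dictionary between $\phi$ and empty $\pi_0$) are both needed; once these are in hand the argument is routine. A secondary minor point is ensuring the composition and unit data of $\coprod_i \A_i$ agree with those of $\A$, but since no new morphisms can be composed across distinct blocks (one of the factors is $\phi$, and $\phi \otimes X \cong \phi$), there is nothing to check beyond the within-block data, which is inherited verbatim from $\A$.
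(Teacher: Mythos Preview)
Your proposal is correct and essentially the same as the paper's argument: the paper decomposes $\pi_{0}\A$ into its connected components $\B_{i}$ and lifts these back to full $\V$-subcategories $\A_{i}$, which by Assumption~\ref{assumption_V}(2) is exactly your equivalence relation generated by ``$\A(x,y)\neq\phi$'' (since $\A(a,b)=\phi$ iff $\pi_{0}\A(a,b)=\emptyset$). The paper's proof is terser and skips the coproduct bookkeeping you spell out, but the partition of $\A_{0}$ and the use of Assumption~\ref{assumption_V}(2) as the bridge are identical.
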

\begin{proof}
The category of connected components $\pi_{0}\A$ can be decomposed as 
$\pi_{0}\A = \coprod_{i} \B_{i}$ for connected subcategories $\B_{i}$ of $\pi_{0}\A$.
Consider the full subcategory $\A_{i}$ of $\A$ having the same objects as $\B_{i}$.
Then $\A_{i}$ is connected and $\A= \coprod_{i} \A_{i}$ since 
$\A(a,b)=\phi$ if and only if $\pi_{0}\A(a,b)=\phi$ by Assumption \ref{assumption_V}.
\end{proof}

\begin{definition}
Let $\V$ be a monoidal category. 
Let $p : E \to B$ be a morphism in a monoidal category $\V$. The fiber $F_{b}$ 
of $p$ over $b : \mathbf{1} \to B$ is defined by the pullback 
\[
\xymatrix{
F_{b} \ar[r] \ar[d] & E \ar[d]^{p} \\
\mathbf{1} \ar[r]_{b} & B
}
\]
in $\V$. The fiber $\F_{b}$ of a $\V$-functor $p : \E \to \B$ over an object $b$ of $\B$ can be defined in $\V\Cat$ similarly to 
the above case. Note that choosing an object $b$ of $\B$ gives a $\V$-functor 
$\mathcal{I} \to \B$ from the unit category $\mathcal{I}$ consisting of a single object and the unit $\mathbf{1}$ as the hom-object.
The fiber $\F_{b}$ is a $\V$-category consisting of the inverse image $p^{-1}(b)$ as objects and the fiber 
of $p : \E(x,y) \to \B(b,b)$ over the identity $1_{b} : \mathbf{1} \to \B(b,b)$ as the hom-object $\F_{b}(x,y)$.
\end{definition}

\begin{lemma}\label{fiber}
Let $\V$ be a monoidal model category satisfying Assumption \ref{assumption_V}. 
Suppose that $p : E \to B$ is a fibration between fibrant objects in $\V$ with the connected base $B$.
Then, any two fibers $F_{b}$ and $F_{b'}$ are weakly equivalent.
\end{lemma}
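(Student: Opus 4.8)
The plan is to identify $F_{b}$ with the homotopy fiber of $p$ over $b$ and to exploit that this depends only on the homotopy class of $b$. Since $\V$ is cartesian, $\mathbf{1}$ is the terminal object and hence fibrant, and it is cofibrant by Assumption \ref{assumption_V}, so $\mathbf{1}$ is bifibrant; combined with the fibrancy of $B$, this gives a natural bijection $\pi_{0}B=\Ho(\V)(\mathbf{1},B)\cong\V(\mathbf{1},B)/\simeq$. Therefore ``$B$ connected'' means precisely that any two points $b,b'\colon\mathbf{1}\to B$ are homotopic in $\V$, and it suffices to show that $F_{b}$ and $F_{b'}$ are weakly equivalent for an arbitrary such pair.

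I would first fix a path object for $B$: factor the diagonal as a weak equivalence $s\colon B\to B^{I}$ followed by a fibration $(d_{0},d_{1})\colon B^{I}\to B\times B$. Since $B$ is fibrant, so is $B\times B$ (the projections $B\times B\to B$ are pullbacks of the fibration $B\to\mathbf{1}$), hence $B^{I}$ is fibrant. Each $d_{i}\colon B^{I}\to B$ is then a composite of fibrations and also a retraction of the weak equivalence $s$, so $d_{0}$ and $d_{1}$ are trivial fibrations. As $\mathbf{1}$ is cofibrant and $B$ is fibrant, the homotopy $b\simeq b'$ is realized by a right homotopy $h\colon\mathbf{1}\to B^{I}$ with $d_{0}h=b$ and $d_{1}h=b'$.

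Next I would transport $F_{b}$ to $F_{b'}$ along $h$ by pulling $p$ back over the path object. Pulling $p$ back along $d_{0}$ yields a fibration $q_{0}\colon E_{0}\to B^{I}$ whose projection $E_{0}\to E$ is a pullback of the trivial fibration $d_{0}$, hence itself a trivial fibration, with $E_{0}$ fibrant; its fiber over $h$ is $\mathbf{1}\times_{d_{0}h,B}E=F_{b}$. Symmetrically, pulling $p$ back along $d_{1}$ gives a fibration $q_{1}\colon E_{1}\to B^{I}$ with fiber $F_{b'}$ over $h$. To link $E_{0}$ and $E_{1}$, note that pulling $q_{0}$ back along $s$ recovers $p$ (as $d_{0}s=\mathrm{id}_{B}$), giving a map $E\to E_{0}$ over $B^{I}$ that is a pullback of the weak equivalence $s$ between fibrant objects along the fibration $q_{0}$, hence a weak equivalence; likewise $E\to E_{1}$ over $B^{I}$. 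Factoring $sp\colon E\to B^{I}$ as a trivial cofibration $E\to\widehat{E}$ followed by a fibration $\widehat{E}\to B^{I}$ and lifting against $q_{0}$ and $q_{1}$, I obtain weak equivalences $\widehat{E}\to E_{0}$ and $\widehat{E}\to E_{1}$ over $B^{I}$, all of them fibrations over the fibrant object $B^{I}$ between fibrant objects. Restricting these maps to the fibers over $h$ produces weak equivalences from the fiber $\widehat{E}_{h}$ of $\widehat{E}\to B^{I}$ to $F_{b}$ and to $F_{b'}$, so $F_{b}$ and $F_{b'}$ are weakly equivalent.

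The step I expect to be the main obstacle is the justification that these pullback constructions preserve weak equivalences — namely, that the pullback of a weak equivalence between fibrant objects along a fibration is again a weak equivalence, and that a weak equivalence over $B^{I}$ between fibrations over $B^{I}$ restricts to a weak equivalence on the fibers over $h$. Assumption \ref{assumption_V} does not include right properness, so this should not be derived from that; instead I would invoke the fact that the fibrant objects of any model category, together with its fibrations and weak equivalences, form a category of fibrant objects in the sense of K.\,Brown, in which exactly these gluing statements hold with no extra hypotheses. With that available, the remaining verifications are routine, using only that pullbacks of (trivial) fibrations are (trivial) fibrations and that fibrant objects are closed under such pullbacks.
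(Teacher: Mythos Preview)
Your argument is correct. Both you and the paper begin with the same observation (that $\mathbf{1}$ is bifibrant and $B$ fibrant, so connectedness of $B$ gives a right homotopy $b\simeq b'$), and the technical input you flag at the end---that pullback of a weak equivalence between fibrant objects along a fibration is again a weak equivalence, and that weak equivalences between fibrations over a fibrant base restrict to weak equivalences on fibers---is exactly what the paper records as Proposition~\ref{stable} (quoting Hirschhorn 13.1.2) rather than as Brown's category of fibrant objects, though the content is the same.

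Where the approaches diverge is in packaging. The paper proves in its appendix a general statement (Corollary~\ref{stable3}): for a homotopy commutative ladder of fibrant objects with weak-equivalence verticals and at least one fibration in each row, the two pullbacks are weakly equivalent. Lemma~\ref{fiber} is then a one-line application of this to the ladder $\mathbf{1}\xrightarrow{b}B\xleftarrow{p}E$ over $\mathbf{1}\xrightarrow{b'}B\xleftarrow{p}E$ with identity verticals. Your proof is essentially an inline, specialized proof of that corollary: you build the path object, pull $p$ back along $d_0$ and $d_1$, produce $\widehat{E}$ as a common fibrant replacement over $B^I$, and then restrict to fibers over the homotopy $h$. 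The paper's proof of Corollary~\ref{stable3} goes through the homotopy pullback $A\times_C^h B$ and Lemma~\ref{homotopic}, which uses a cylinder on the domain rather than your path object on the codomain, but the two constructions are dual and your version is arguably cleaner here since the homotopy is already given as a right homotopy. What the paper's route buys is reusability (the same corollary is used again in Lemma~\ref{fiber2}); what your route buys is self-containment and an explicit zigzag $F_b\leftarrow\widehat{E}_h\to F_{b'}$.
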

\begin{proof}
The morphisms $b, b' : \mathbf{1} \to B$ are homotopic to each other. Hence, the fibers $F_{b}$ and $F_{b'}$ are weakly equivalent by Corollary \ref{stable3}. 
\end{proof}

Unfortunately, the fibers of a naive fibration are not DK-equivalent to each other in general even if the base $\V$-category is connected.
For example, a finite left $G$-set $A$ for a finite nontrivial group $G$ gives a category $A_{G}$ with two obejcts $0,1$ and 
$A_{G}(0,0)=G$, $A_{G}(0,1)=A$, $A_{G}(1,0)=\phi$, and $A_{G}(1,1)=*$.
The constant function induces a isofibration $p$ from $A_{G}$ to the category formed by $0 \to 1$. 
However, these fibers $F_{0}=G$ and $F_{1}=*$.
Thus, we require the category of connected components to be a groupoid.

\begin{lemma}\label{fiber2}
Let $\V$ be a monoidal model category satisfying Assumption \ref{assumption_V}. 
Suppose that $p : \E \to \B$ is a naive fibration between fibrant $\V$-categories and let $\pi_{0}\B$ be a connected groupoid.
Then, any two fibers $\F_{b}$ and $\F_{b'}$ are DK-equivalent.
\end{lemma}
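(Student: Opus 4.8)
The plan is to mimic the proof of Lemma \ref{fiber}, but now taking account of the category of connected components. Since $\pi_{0}\B$ is a connected groupoid, any two objects $b$ and $b'$ of $\B$ are connected by an isomorphism in $\pi_{0}\B$; by the argument in Lemma \ref{weakly_equivalent} this isomorphism is the homotopy class of a morphism $\beta : \mathbf{1} \to \B(b,b')$ in $\Ho(\V)$ whose composition inverse we denote $\bar\beta : \mathbf{1} \to \B(b',b)$. So it suffices to treat the case in which $b$ and $b'$ are connected by a single such $\beta$, the general case following by composing zigzags.

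First I would construct a $\V$-functor $\beta_{*} : \F_{b} \to \F_{b'}$ between the fibers. On objects, an object $x$ of $\F_{b}$ lies over $b$; since $\pi_{0}p$ is an isofibration and $[\beta] : b \to b'$ is an isomorphism in $\pi_{0}\B$, there is an isomorphism in $\pi_{0}\E$ lifting it, hence (after choosing lifts) an assignment $x \mapsto \beta_{*}x$ with $\beta_{*}x$ over $b'$. On hom-objects, I would use conjugation by (the chosen lifts of) $\beta$: the hom-object $\F_{b}(x,y)$ is the fiber of $p : \E(x,y) \to \B(b,b)$ over $1_{b}$, and composing with the lifts of $\beta$ and $\bar\beta$ on either side, together with the fact that $\bar\beta \circ \beta \simeq 1_{b}$ and $p$ of the lifts equals $\beta,\bar\beta$, gives a map $\E(x,y) \to \E(\beta_{*}x,\beta_{*}y)$ sending the fiber over $1_{b}$ into the fiber over $1_{b'}$ up to homotopy. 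The key technical point, which I expect to be the main obstacle, is making this construction strictly functorial and well-defined at the level of $\V$ (not just $\Ho(\V)$): the lifts of $\beta$ through the isofibration $\pi_{0}p$ are only homotopy classes, so one must either argue that $\F_{b}$ and $\F_{b'}$ are connected by a zigzag of honest $\V$-functors each of which is a weak equivalence on hom-objects, or work throughout in a setting where the fibers are replaced by DK-equivalent models on which the construction rigidifies. I would resolve this by noting that it is enough to exhibit, for the purpose of DK-equivalence, a $\V$-functor inducing an equivalence on $\pi_{0}$ and weak equivalences on hom-objects; both conditions can be checked after applying $\pi_{0}$ and using the homotopy relations $\bar\beta\circ\beta\simeq 1$, $\beta\circ\bar\beta\simeq 1$.

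It then remains to verify the two DK-equivalence conditions for $\beta_{*}$. For the hom-object condition: the composite $\E(x,y) \to \E(\beta_{*}x,\beta_{*}y)$ restricted to fibers is, up to the homotopies above, inverse to the corresponding map built from $\bar\beta$, so on hom-objects $\F_{b}(x,y) \to \F_{b'}(\beta_{*}x,\beta_{*}y)$ is a homotopy equivalence, hence a weak equivalence in $\V$ (here using that all objects in sight are fibrant, $\V$ being cartesian with cofibrant unit so $\mathbf{1}$ and the hom-objects of fibrant $\V$-categories behave well). For the $\pi_{0}$ condition: applying $\pi_{0}$ turns $\beta_{*}$ into the functor $\pi_{0}\F_{b} \to \pi_{0}\F_{b'}$ given by conjugation by the isomorphism $[\beta]$ in the groupoid $\pi_{0}\B$ lifted through the isofibration $\pi_{0}p$, which is an equivalence of categories with quasi-inverse conjugation by $[\bar\beta]$; this is a standard fact about isofibrations of groupoids (transport along an isomorphism in the base gives an equivalence of fibers). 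Combining the two conditions, $\beta_{*} : \F_{b} \to \F_{b'}$ is a DK-equivalence, and composing such maps along a zigzag connecting any two objects of $\B$ in the connected groupoid $\pi_{0}\B$ shows that $\F_{b}$ and $\F_{b'}$ are DK-equivalent, as claimed.
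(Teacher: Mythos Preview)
Your proposal has a genuine gap precisely at the point you yourself flag as the main obstacle: you never actually produce a $\V$-functor $\beta_{*} : \F_{b} \to \F_{b'}$. The lifts of $[\beta]$ through the isofibration $\pi_{0}p$ are only homotopy classes, so the ``conjugation'' map $\E(x,y) \to \E(\beta_{*}x,\beta_{*}y)$ is a morphism in $\Ho(\V)$, not in $\V$; moreover it sends the strict fiber over $1_{b}$ only \emph{up to homotopy} into the strict fiber over $1_{b'}$, so it does not even restrict to a map $\F_{b}(x,y) \to \F_{b'}(\beta_{*}x,\beta_{*}y)$ in $\V$. Your proposed resolution (``check the DK-conditions after applying $\pi_{0}$'') does not address this: the DK-equivalence conditions are properties \emph{of} a $\V$-functor, not a certificate that a homotopy-level construction is one. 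A DK-equivalence is by definition an honest $\V$-functor, and being DK-equivalent means being connected by a zigzag of such.

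The paper avoids this rigidification problem entirely by a different route. Rather than building a direct comparison functor, it introduces an interval $\V$-category $\mathcal{H}$ (two objects, all hom-objects equal to $\mathbf{1}$), observes that the isomorphism $b \cong b'$ in $\pi_{0}\B$ yields a $\V$-functor $h : \mathcal{H} \to \B$ extending $b \coprod b' : \mathcal{I} \coprod \mathcal{I} \to \B$ in $\Ho(\V\Cat)$, and then applies the homotopy-pullback machinery of the appendix (Corollary~\ref{stable3}): the two fibers $\F_{b}$ and $\F_{b'}$ are strict pullbacks along the fibration $p$ of diagrams connected by weak equivalences ($\mathcal{I} \to \mathcal{H} \leftarrow \mathcal{I}$) and homotopy-commutative squares, hence are weakly equivalent in $\V\Cat$, i.e.\ DK-equivalent. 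This argument never needs to rigidify anything by hand; the model-categorical comparison of pullbacks does all the work.
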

\begin{proof}
Since $\pi_{0}\B$ is a connected groupoid, two objects $b,b'$ are isomorphic in $\pi_{0}B$.
This gives two morphisms $\alpha : \mathbf{1} \to \B(b,b')$ and $\beta : \mathbf{1} \to \B(b',b)$ satisfying the identity condition.
Let $\mathcal{H}$ be a $\V$-category consisting of two objects $0$, $1$ and $\mathcal{H}(i,j)=\mathbf{1}$ for any $0 \leqq i,j \leqq 1$.
This is DK-equivalent to the unit $\V$-category $\mathcal{I}$ consisting of only 
a single point as the object and the unit $\mathbf{1}$ as the hom-object. 
The two morphisms $\alpha$, $\beta$ give rise to a $\V$-functor $h : \mathcal{H} \to \B$ making the following diagram commute in $\Ho(\V)$
\[
\xymatrix{
\mathcal{I} \coprod \mathcal{I} \ar[r]^{b \coprod b'} \ar[d]_{} & \mathcal{B} \ar@{=}[d] \\
\mathcal{H} \ar[r]_{h} & \mathcal{B}.
}
\]
We have the following homotopy commutative diagrams 
\[
\xymatrix{
\mathcal{I} \ar[r]^{b} \ar[d] & \B \ar@{=}[d] & \E \ar[l]_{p} \ar@{=}[d] \\
\mathcal{H} \ar[r]^{h}  & \B & \E \ar[l]_{p}  \\
\mathcal{I} \ar[r]^{b'} \ar[u] & \B \ar@{=}[u] & \E. \ar[l]_{p} \ar@{=}[u]
}
\]
By Corollary \ref{stable3}, we conclude that $\F_{b}$ and $\F_{b'}$ are DK-equivalent.
\end{proof}

\begin{lemma}\label{groupoid}
Let $\V$ be a monoidal model category with a measure satisfying Assumption \ref{assumption_V}.
Suppose that $\B$ is a $\V$-category admitting Euler characteristic and $\pi_{0}\B$ is a connected groupoid.
Then, we have $\chi(\B) = |\B(b,b)|^{-1}$ for any object $b$ of $\B$.
\end{lemma}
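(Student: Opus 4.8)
The plan is to show that when $\pi_{0}\B$ is a connected groupoid, the similarity matrix $\xi_{\B}$ has constant entries, so computing a weighting and coweighting is elementary. First I would observe that since $\pi_{0}\B$ is a connected groupoid, any two objects $a$ and $b$ of $\B$ are isomorphic in $\pi_{0}\B$, hence weakly equivalent. By Lemma \ref{weakly_equivalent}, this forces $\B(a,c)$ and $\B(b,c)$ to be weakly equivalent in $\V$ for every $c$, and similarly in the second variable; since the measure is invariant under weak equivalences (it factors through $\Ho(\V)$), we get $\xi_{\B}(a,c) = \xi_{\B}(b,c) = \xi_{\B}(b,b)$ for all objects $a, c$. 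In other words, writing $s = |\B(b,b)|$ (independent of the choice of $b$ by the same argument), the matrix $\xi_{\B}$ is the $n \times n$ matrix all of whose entries equal $s$, where $n = |\B_{0}|$.

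Next I would use the hypothesis that $\B$ admits Euler characteristic to pin down $s$. Let $w$ be a weighting, so $\xi_{\B} w = u$. The $a$-th row of this equation reads $s \sum_{c \in \B_{0}} w(c) = 1$ for every $a$; in particular $s \cdot \chi(\B) = 1$ once we recall $\chi(\B) = \sum_{c} w(c)$. This already shows $s$ is invertible in $k$ and that $\chi(\B) = s^{-1} = |\B(b,b)|^{-1}$, which is exactly the claim. (One can equally well run the argument with a coweighting $v$: $\sum_{a} v(a) \cdot s = 1$ for each column gives the same conclusion.) So the proof is essentially: constant matrix $+$ existence of a weighting $\Rightarrow$ the common entry is a unit with inverse equal to the Euler characteristic.

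I do not expect a serious obstacle here; the one point that needs a little care is justifying that the measure $|-|$ sends weakly equivalent objects of $\V$ to the same element of $k$ — but this is immediate from the definition of a measure as a homomorphism out of $\W_{0}/\!\cong \, \subseteq \Ho(\V)_{0}/\!\cong$, since weakly equivalent objects become isomorphic in $\Ho(\V)$. The only other thing to check is that $\B$ is finite and measurable so that $\xi_{\B}$, a weighting, and $\chi(\B)$ all make sense; this is part of the standing hypothesis that $\B$ admits Euler characteristic. Thus the argument reduces to the linear algebra of the all-$s$ matrix, and no model-categorical input beyond Lemma \ref{weakly_equivalent} is required.
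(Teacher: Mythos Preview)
Your proof is correct and follows essentially the same approach as the paper's: both use Lemma \ref{weakly_equivalent} together with the connected-groupoid hypothesis to conclude that the similarity matrix $\xi_{\B}$ is constant with value $s=|\B(b,b)|$, and then read off $\chi(\B)=s^{-1}$. The only minor difference is in the final step: the paper writes down the explicit weighting $w(x)=(\B_0^{\sharp}\cdot s)^{-1}$, whereas you take an arbitrary weighting $w$ (guaranteed by the hypothesis) and deduce $s\cdot\chi(\B)=1$ directly from $\xi_{\B}w=u$; your version has the small advantage of not needing $\B_0^{\sharp}$ to be a unit in $k$.
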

\begin{proof}
Fix an object $b$ of $\B$. 
Lemma \ref{weakly_equivalent} shows that $|\B(x,y)| = |\B(b,b)|$ for any objects $x$ and $y$ of $\B$ since $\pi_{0}\B$ is a connected groupoid.
The similarity matrix $\xi_{\B}$ is $\xi_{\B}(x,y)=|\B(b,b)|$ for any objects $x$ and $y$ of $\B$.
A weighting $w$ on $\B$ can be defined as $w(x)= (\B_{0}^{\sharp} \cdot |\B(b,b)|)^{-1}$, where $\B_{0}^{\sharp}$ is the cardinal of $\B_{0}$.
We obtain the Euler characteristic $\chi(\B) = \sum_{x \in \B_{0}} w(x) = |\B(b,b)|^{-1}$.
\end{proof}

\begin{lemma}
Let $\V$ be a monoidal model category satisfying Assumption \ref{assumption_V}. 
If $p : E \to B$ is a fibration between fibrant objects, then $\pi_{0}F$ is isomorphic to the fiber of $\pi_{0}p : \pi_{0}E \to \pi_{0}B$.
\end{lemma}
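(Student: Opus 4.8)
The plan is to work directly with the definitions of $\pi_0$ on objects and on $\V$-categories, and to use the fact that the functor $\pi_0(-) = \Ho(\V)(\mathbf{1},-) : \V \to \Set$ preserves the relevant pullback. Concretely, recall that $F = F_{1_b}$ is the fiber of $p : E(x,y) \to B(p x, p y)$ over $1_b$ (for the single relevant object $b$, so that $\pi_0 E$, $\pi_0 B$ have the inverse images of $b$ as objects), together with the analogous data on objects; and that $\pi_0 E$, $\pi_0 B$, $\pi_0 F$ are the small categories with the same objects but hom-sets $\pi_0(E(x,y))$, etc. So on the level of objects, the claim that $\pi_0 F$ maps isomorphically onto the fiber of $\pi_0 p$ amounts to the statement that the fiber of the map of sets $\pi_0 E \to \pi_0 B$ over (the image of) $b$ is exactly $p^{-1}(b) = F_0$; this is immediate since $p$ is literally the identity on the object sets only up to the set map, and the set-theoretic fiber is computed objectwise.

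The content is therefore in the hom-objects: I would show that for objects $x,y$ in the fiber, the canonical map $\pi_0(F(x,y)) \to \pi_0(E(x,y))$ identifies $\pi_0(F(x,y))$ with the fiber of $\pi_0 p : \pi_0(E(x,y)) \to \pi_0(B(b,b))$ over the class of $1_b$. The key step is: since $p : E(x,y) \to B(b,b)$ is a fibration in $\V$ between fibrant objects (both $E$, $B$ are fibrant, and the fiber appears), $F(x,y)$ is the pullback of a fibration, hence a \emph{homotopy} pullback. Applying $\Ho(\V)(\mathbf{1}, -)$, which is a homotopy-invariant functor on the model category $\V$, to the homotopy-pullback square $F(x,y) \to E(x,y) \to B(b,b) \leftarrow \mathbf{1}$ yields, via a long exact sequence / fibration sequence argument in $\V$ (or directly: representable functors out of $\mathbf{1}$ send homotopy pullbacks along fibrations to pullbacks of sets, since $\mathbf{1}$ is cofibrant by Assumption~\ref{assumption_V}), an exact sequence of pointed sets
\[
\pi_0(F(x,y)) \longrightarrow \pi_0(E(x,y)) \longrightarrow \pi_0(B(b,b))
\]
identifying $\pi_0(F(x,y))$ with the preimage of the basepoint $[1_b]$. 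Assembling the objectwise statements into a functor identity gives the claimed isomorphism of categories $\pi_0 F \cong \mathrm{fib}(\pi_0 p)$.

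I expect the main obstacle to be making precise ``$\Ho(\V)(\mathbf{1},-)$ turns this homotopy pullback into a pullback of sets,'' since in a general monoidal model category there need not be a Puppe/fibration long exact sequence of homotopy groups; the cartesianness and cofibrant unit from Assumption~\ref{assumption_V} are exactly what is needed so that $\mathbf{1}$ is a cofibrant object and the mapping-space-into-$\mathbf{1}$ functor sends a pullback square along a fibration between fibrant objects to a pullback of the derived mapping sets, whence a pullback of $\pi_0$'s. The one subtlety worth spelling out is compatibility with composition, i.e.\ that the identification is functorial in $(x,y)$ so that it genuinely assembles to a $\Set$-enriched (ordinary) functor and not merely a bijection on each hom-set; this follows because every map in sight is induced by a $\V$-functor and $\pi_0$ is a (strong monoidal, hence composition-preserving) functor $\V\Cat \to \Cat$. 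Given these, the proof is a short diagram chase with no further computation.
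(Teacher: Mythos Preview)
You have misread the statement: here $E$, $B$, $F$ are objects of $\V$ (note the non-calligraphic letters), not $\V$-categories, and $\pi_0 E$, $\pi_0 B$, $\pi_0 F$ are the \emph{sets} $\Ho(\V)(\mathbf{1},E)$, etc., not categories. The statement you are actually sketching---about the category $\pi_0\F$ of connected components of the fiber of a naive fibration between fibrant $\V$-categories---is the Corollary immediately following this lemma in the paper, and the paper deduces that Corollary from the present lemma by applying the lemma to each hom-object. Your ``on objects / on hom-objects'' decomposition and the discussion of functoriality in $(x,y)$ therefore belong to the Corollary, not to this lemma.

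Once that extra layer is stripped away, your core argument coincides with the paper's. Both observe that since $p$ is a fibration between fibrant objects and $\mathbf{1}$ is fibrant, the strict fiber $F$ is weakly equivalent to the homotopy pullback $P = E \times_B^h \mathbf{1}$, and then argue that $\pi_0(P) = \Ho(\V)(\mathbf{1},P)$ computes the set-theoretic fiber of $\pi_0 p$ over $[b]$. The paper phrases the second step via the universal property of the homotopy pullback: a class $[e] \in \pi_0 E$ with $[pe]=[b]$ is precisely a homotopy-commutative square on $\mathbf{1}$, which corresponds to a unique morphism $\mathbf{1} \to P$ in $\Ho(\V)$. Your alternative phrasing via an ``exact sequence of pointed sets'' is more than is needed here, and its status in a general monoidal model category is exactly the obstacle you yourself flag; the direct universal-property argument (which you also mention parenthetically) is both cleaner and is what the paper does.
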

\begin{proof}
A point $[e]$ in the fiber of $\pi_{0}p$ over $[b]$ of $\pi_{0}B$ induces the following homotopy commutative diagram
\[
\xymatrix{
\mathbf{1} \ar[r]^{e} \ar@{=}[d] & E \ar[d]^{p} \\
\mathbf{1} \ar[r]_{b} & B.
}
\]
There exists a unique morphism 
from $\mathbf{1}$ to the homotopy pullback $P=E \times_{B}^{h} \mathbf{1}$ (see Definition \ref{hopu} and Theorem \ref{unique_mor}) making the diagram commute up to homotopy.
This implies that 
$\pi_{0}(P)$ is isomorphic to the fiber of $\pi_{0}p : \pi_{0}E \to \pi_{0}B$.
The three objects $E$, $B$, and $\mathbf{1}$ are fibrant and $p$ is a fibration. 
The homotopy fiber $P$ and the fiber $F$ of $p$ are weakly equivalent by Theorem \ref{back}.
Hence $\pi_{0}(F) \cong \pi_{0}(P)$.
\end{proof}

\begin{corollary}\label{pi_0}
Let $\V$ be a monoidal model category satisfying Assumption \ref{assumption_V}.
Let $p : \E \to \B$ be a naive fibration between fibrant $\V$-categories. Then, the category of connected components $\pi_{0}(\F_{b})$ of fiber of $p$ over an object $b$ of $\B$ 
is isomorphic to the category of fiber of $\pi_{0}p : \pi_{0}\E \to \pi_{0}\B$.
\end{corollary}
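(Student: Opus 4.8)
The plan is to reduce this corollary to the preceding lemma, which establishes the analogous statement for a single fibration $p : E \to B$ between fibrant objects in $\V$. The key observation is that the construction $\pi_0$ on a $\V$-category is computed hom-object by hom-object: $(\pi_0 \A)(x,y) = \pi_0(\A(x,y))$ on morphisms, while $(\pi_0\A)_0 = \A_0$ on objects. So I would first unwind what the fiber $\F_b$ is as a $\V$-category, using the definition given above: its object set is $p^{-1}(b) \subseteq \E_0$, and its hom-object $\F_b(x,y)$ is the fiber in $\V$ of $p : \E(x,y) \to \B(b,b)$ over the identity $1_b : \mathbf{1} \to \B(b,b)$. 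Then I would unwind the fiber of $\pi_0 p : \pi_0\E \to \pi_0\B$ over $[b]$: its object set is again $(\pi_0 p)^{-1}([b])$, and its hom-sets are the fibers of $\pi_0\E(x,y) \to \pi_0\B([b],[b])$ over the class of the identity.

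Next I would match these up. On objects, since $p$ is a naive fibration, $\pi_0 p$ is an isofibration, and by definition $(\pi_0\E)_0 = \E_0$, $(\pi_0\B)_0 = \B_0$; one should check that the object set of $\pi_0(\F_b)$, namely $p^{-1}(b)$, coincides with the object set of the fiber of $\pi_0 p$ over $b$, namely $(\pi_0 p)^{-1}(b)$ — but these are literally the same subset of $\E_0$ because $\pi_0$ does not alter object sets and the object map of $p$ equals that of $\pi_0 p$. On hom-objects, for fixed $x, y \in p^{-1}(b)$ I need
\[
\pi_0\bigl(\F_b(x,y)\bigr) \;\cong\; \text{fiber of } \pi_0\E(x,y) \to \pi_0\B(b,b) \text{ over } [1_b].
\]
This is exactly the statement of the preceding lemma applied to the fibration $p : \E(x,y) \to \B(b,b)$ in $\V$ between fibrant objects — here $\E(x,y)$ and $\B(b,b)$ are fibrant because $\E$ and $\B$ are fibrant $\V$-categories, and $p$ restricted to this hom-object is a fibration in $\V$ because $p$ is a naive fibration; the fiber of this $\V$-morphism over $1_b$ is precisely $\F_b(x,y)$ by the definition of the fiber $\V$-category. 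So the lemma gives a natural isomorphism of sets, and naturality in $x, y$ is what promotes this to an isomorphism of categories.

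Finally I would assemble these pieces into a functor and check it is an isomorphism of categories: it is the identity on objects (both sides have object set $p^{-1}(b)$), and on each hom-set it is the isomorphism supplied by the lemma; compatibility with composition and identities follows because all the structure maps in sight ($\pi_0$ of composition in $\E$, composition in $\F_b$, the pullback projections) are induced functorially from the $\V$-level data, and the isomorphisms from the lemma are natural. I do not expect a serious obstacle here — the content is entirely in the previous lemma; this corollary is the routine ``apply hom-object-wise and reassemble'' step. The one point requiring mild care is the naturality of the isomorphism from the lemma in the objects $x$ and $y$, i.e. checking that the isomorphism constructed there (via the homotopy pullback and Theorem \ref{back}) commutes with the maps $\F_b(x,y) \to \F_b(x,z)$ induced by a morphism $\mathbf{1} \to \F_b(y,z)$; this amounts to noting that the homotopy-pullback construction is functorial in the cospan, so the diagram chase is formal.
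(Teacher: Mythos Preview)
Your proposal is correct and follows exactly the route the paper intends: the corollary is stated without proof immediately after the lemma, and your argument---applying the lemma hom-object by hom-object and noting that $\pi_0$ is the identity on object sets---is precisely the implicit justification. The only remark is that the paper does not spell out the naturality check you flag at the end, treating the whole thing as immediate.
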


\begin{definition}
Let $\V$ be a monoidal model category.
We say that a measure $|-| : \W \to k$ of $\V$ {\em preserves fibrations} 
if for any fibration $p : E \to B$ such that $E$, $B$, and any fiber belong to $\W$ and the base $B$ is decomposed as a finite coproduct $\coprod B_{i}$ of 
connected objects $B_{i}$, we have
$$|E| = \sum_{i} |B_{i}| \cdot |F_{i}|,$$
where $F_{i}$ is the fiber of $p$ over a morphism $\mathbf{1} \to B_{i}$.
\end{definition}

\begin{theorem}\label{product}
Suppose that $\V$ is a monoidal model category with a measure preserving fibrations and it satisfies Assumption \ref{assumption_V},
and let $p : \E \to \B$ be a naive fibration between fibrant $\V$-categories admitting Euler characteristic, and also any fiber admits Euler characteristic. 
If both $\pi_{0}\E$ and $\pi_{0}\B$ are groupoids, and $\B$ is strongly connected, we have 
$$\chi(\E) = \chi(\B)\chi(\F),$$
where $\F$ is the fiber of $p$ over an object of $\B$.
\end{theorem}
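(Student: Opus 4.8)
The plan is to reduce the identity to the connected components of $\E$ and of the chosen fiber $\F$, and then to combine Lemma~\ref{groupoid} with the hypothesis that the measure preserves fibrations.

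First I would unwind strong connectedness of $\B$. Since every hom-object $\B(x,y)$ is connected in $\V$, we get $(\pi_{0}\B)(x,y)=\pi_{0}(\B(x,y))=*$, so $\pi_{0}\B$ is the indiscrete category on its objects; in particular it is a connected groupoid, so Lemma~\ref{groupoid} gives $\chi(\B)=|\B(b,b)|^{-1}$ (with $|\B(b,b)|$ invertible in $k$) for every object $b$. By Corollary~\ref{pi_0} this also shows that $\pi_{0}\F$, the fiber of $\pi_{0}p$ over $b$, is the full subcategory of the groupoid $\pi_{0}\E$ spanned by $p^{-1}(b)$, hence itself a groupoid. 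Next I would decompose $\E=\coprod_{j}\E_{j}$ and $\F=\coprod_{k}\F_{k}$ into connected $\V$-categories; since the similarity matrix of a coproduct is block diagonal, every summand still admits Euler characteristic, $\chi(\E)=\sum_{j}\chi(\E_{j})$ and $\chi(\F)=\sum_{k}\chi(\F_{k})$, and each $\pi_{0}\E_{j}$, $\pi_{0}\F_{k}$ is a connected groupoid. So Lemma~\ref{groupoid} yields $\chi(\E_{j})=|\E_{j}(e,e)|^{-1}$ and $\chi(\F_{k})=|\F_{k}(f,f)|^{-1}$ for arbitrary objects $e\in\E_{j}$, $f\in\F_{k}$.

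Second, I would feed in the fibration on hom-objects. For a component $\E_{j}$, pick an object $e\in\E_{j}$ with $p(e)=b$; such an $e$ exists because $\pi_{0}p$ is an isofibration and $\pi_{0}\B$ is connected, so the isomorphism class of $b$ in $\pi_{0}\B$ lifts into $\E_{j}$. As $p$ is a naive fibration between fibrant $\V$-categories, $p\colon\E(e,e)\to\B(b,b)$ is a fibration between fibrant objects whose base is connected, and whose fiber over $1_{b}\colon\mathbf{1}\to\B(b,b)$ is, by definition of the fiber $\V$-category $\F$, the hom-object $\F(e,e)$. Since the measure preserves fibrations and $\B(b,b)$ is connected, $|\E(e,e)|=|\B(b,b)|\cdot|\F(e,e)|$, and therefore
$$\chi(\E_{j})=|\E_{j}(e,e)|^{-1}=|\B(b,b)|^{-1}\,|\F(e,e)|^{-1}=\chi(\B)\,|\F(e,e)|^{-1}.$$

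Third comes the step I expect to be the main obstacle, namely relating the components of $\F$ to those of $\E$. Using that $\pi_{0}\F$ is the full subcategory of the groupoid $\pi_{0}\E$ on $p^{-1}(b)$, two objects of $p^{-1}(b)$ lie in the same component of $\pi_{0}\F$ exactly when they lie in the same component of $\pi_{0}\E$; combined with the fact that every $\E_{j}$ meets $p^{-1}(b)$, this shows the components of $\F$ are precisely the intersections $\E_{j}\cap p^{-1}(b)$, so $j\mapsto k(j)$ (the component of $\F$ contained in $\E_{j}$) is a bijection. Choosing $e_{j}\in\E_{j}\cap p^{-1}(b)$ and using $|\F(e_{j},e_{j})|=|\F_{k(j)}(e_{j},e_{j})|=\chi(\F_{k(j)})^{-1}$, I would finish with
$$\chi(\E)=\sum_{j}\chi(\E_{j})=\sum_{j}\chi(\B)\,|\F(e_{j},e_{j})|^{-1}=\chi(\B)\sum_{k}\chi(\F_{k})=\chi(\B)\chi(\F).$$
Beyond this component bookkeeping, the points that need care are the descent of ``admits Euler characteristic'' to connected summands (block-diagonality of the similarity matrix, plus Lemma~\ref{weakly_equivalent} to see that $|\F_{k}(x,y)|$ is constant on a component), and the identification of the fiber of $p$ over $1_{b}$ with the hom-object $\F(e,e)$.
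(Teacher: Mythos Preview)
Your argument is correct and follows essentially the same route as the paper's proof: decompose $\E$ into connected pieces, apply Lemma~\ref{groupoid} componentwise, use the measure-preserves-fibrations hypothesis on the fibration $\E(e,e)\to\B(b,b)$, and match the components of $\F$ with those of $\E$. You are in fact more careful than the paper in a couple of places (explicitly lifting along the isofibration $\pi_{0}p$ to choose $e\in\E_{j}$ with $p(e)=b$, and spelling out the bijection $j\leftrightarrow k(j)$ between the components of $\E$ and of $\F$).
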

\begin{proof}
The $\V$-category $\E$ can be decomposed as the finite coproduct
$\E=\coprod_{i} \E_{i}$ for connected subcategories $\E_{i}$.
Choose an object $x_{i}$ of $\E_{i}$ for each $i$.
Consider the following two pullback diagrams
\[
\xymatrix{
\F_{i} \ar[r] \ar[d] & \F_{b} \ar[r] \ar[d] & \mathcal{I} \ar[d]^{b} \\ 
\E_{i} \ar[r] & \E \ar[r]_{p} & \B,
}
\]
where the $\V$-category $\F_{i}$ is the full subcategory of $\F_{b}$ having $p^{-1}(b) \cap (\E_{i})_{0}$ as objects.
We have the coproduct decomposition of $\F_{b}$ by 
$\F_{b}= \coprod_{i} \F_{i}$ since $\F(x,y)=\phi$ if $\E(x,y)=\phi$.
Corollary \ref{pi_0} shows that $\pi_{0}\F_{i}$ is a connected groupoid.
The Euler characteristic of $\F_{b}$ is 
$$\chi(\F_{b}) = \sum_{i} \chi(\F_{i}) = \sum_{i}(|\F_{i}(x_{i},x_{i})|)^{-1}.$$
A naive fibration implies that
$$p : \E(x_{i},x_{i}) \longrightarrow \B(px_{i},px_{i})$$
is a fibration in $\V$, and we have
$$|\E(x_{i},x_{i})| = |\B(px_{i},px_{i})| \cdot |\F_{i}(x_{i},x_{i})|$$
since $\B(px_{i},px_{i})$ is connected. The following calculation shows the result.
\begin{equation}
\begin{split}
\chi(\E) &=\sum_{i} \chi(\E_{i}) \\ \notag
&= \sum_{i} (|\E(x_{i},x_{i})|)^{-1}\\
&= \sum_{i} \left(|\B(px_{i},px_{i})| \cdot |\F_{i}(x_{i},x_{i})| \right)^{-1} \\
&= |\B(b,b)|^{-1} \sum_{i} \left(|\F_{i}(x_{i},x_{i})| \right)^{-1} \\
&= \chi(\B) \chi(\F).
\end{split}
\end{equation}
\end{proof}

Let us consider the case in which $\B$ is not strongly connected.
Since there is no guarantee that an object $X$ in $\V$ can be decomposed by connected objects, 
we need the following assumption.

\begin{assumption}\label{assumption}
We assume that our monoidal model category $\V$ with a measure $|-| : \W \to k$ satisfies Assumption \ref{assumption_V} and the following properties.
\begin{enumerate}
\item The tensor product $\otimes$ is compatible with the coproducts; i.e., 
$\left(\coprod_{i} X \right) \otimes \left( \coprod_{j}Y_{j} \right)$ is naturally isomorphic to 
$\coprod_{i,j} \left( X_{i} \otimes Y_{j}\right)$.
\item Any object $X$ in $\W$ has the finite connected components. 
\item Any object $X$ in $\W$ is decomposed as a finite coproduct $X=\coprod_{i \in \pi_{0}X}X_{i}$ for connected objects $X_{i}$, and the following is a pullback diagram 
\[
\xymatrix{
X_{i} \ar[r] \ar[d] & X=\coprod_{i} X_{i} \ar[d] \\
\mathbf{1} \ar[r]_{i} & \coprod_{i} \mathbf{1}.
}
\]
\end{enumerate}
\end{assumption}

Thanks to the above pullback diagram, these connected objects $X_{i}$ are determined uniquely up to isomorphism for an object $X$.
A morphism $\mu : X \otimes Y \to Z$ in $\V$ induces a map on connected components
$$\pi_{0}X \times \pi_{0}Y \longrightarrow \pi_{0}Z,\ \ (i,j) \mapsto ij.$$
When the three objects $X,Y$, and $Z$ are decomposed as 
$X= \coprod_{i \in \pi_{0}X} X_{i}$, $Y=\coprod_{j \in \pi_{0}Y}Y_{j}$, and $Z=\coprod_{k \in \pi_{0}Z} Z_{k}$ respectively, 
the pullback diagram 
\[
\xymatrix{
& Z_{ij} \ar[rr] \ar@{->}'[d][dd] & &  Z \ar[dd] \\
X_{i} \otimes Y_{j} \ar[rr] \ar[dd] \ar@{..>}[ur]^{\mu_{ij}}& & X \otimes Y \cong \coprod_{i,j} \left(X_{i} \otimes Y_{j}\right) \ar[dd] \ar[ur]^{\mu} & \\
& \mathbf{1} \ar@{->}'[r][rr]_{\hspace{-2cm}ij} & & \coprod_{k} \mathbf{1} \\
\mathbf{1} \ar[rr]_{(i,j)} \ar[ur] & & \coprod_{i,j} \mathbf{1} \ar[ur]& 
}
\]
induces a morphism $\mu_{ij} : X_{i} \otimes Y_{j} \to Z_{ij}$
making the diagram above commute.

\begin{lemma}\label{component}
Let $\V$ be a monoidal model category with a measure satisfying Assumption \ref{assumption}.
A monoid object $X$ in $\W$ induces a monoid structure on $\pi_{0}X$.
Let $X$ be decomposed as the coproduct of the connected components
$$X= \coprod_{i \in \pi_{0}X} X_{i}$$
from Assumption \ref{assumption}. If $\pi_{0}X$ is a group, all connected objects $X_{i}$ are weakly equivalent to each other.
\end{lemma}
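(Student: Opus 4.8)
The plan is to produce, for each $i \in \pi_{0}X$, a \emph{translation by $i$} morphism $X_{0} \to X_{i}$ in $\Ho(\V)$ and to show that it is an isomorphism there, where $0 \in \pi_{0}X$ is the identity of the group $\pi_{0}X$; since this gives $X_{i}$ weakly equivalent to $X_{0}$ for every $i$, all the $X_{i}$ are then weakly equivalent to one another.

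First I would assemble the available structure. Writing $\mu : X \otimes X \to X$ and $e : \mathbf{1} \to X$ for the monoid operations, the discussion preceding the lemma gives the monoid structure on $\pi_{0}X$ with product $(j,i) \mapsto ji$ and unit $0 := \pi_{0}(e)(\ast)$ (using that $\pi_{0}(\mathbf{1}) = \ast$), and it gives morphisms $\mu_{ji} : X_{j} \otimes X_{i} \to X_{ji}$ for all $j,i$ that are compatible with $\mu$, with the coproduct decompositions, and --- by the uniqueness built into the pullback squares of Assumption \ref{assumption} --- with the associativity and unit constraints of $\mu$. The unit $e$ lies in the component $X_{0}$, giving $e_{0} : \mathbf{1} \to X_{0}$, and since every $X_{i}$ is connected the set $\Ho(\V)(\mathbf{1},X_{i})$ is a singleton; I write $x_{i} : \mathbf{1} \to X_{i}$ for its unique element, so $x_{0} = e_{0}$.

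Next, for $i \in \pi_{0}X$ and arbitrary $j$, I would set
$$t_{i}^{\,j} : X_{j} \xrightarrow{\ \sim\ } X_{j} \otimes \mathbf{1} \xrightarrow{\mathrm{id} \otimes x_{i}} X_{j} \otimes X_{i} \xrightarrow{\mu_{ji}} X_{ji}$$
in $\Ho(\V)$ and verify the two identities $t_{i^{-1}}^{\,ji} \circ t_{i}^{\,j} = \mathrm{id}_{X_{j}}$ and $t_{i}^{\,ji^{-1}} \circ t_{i^{-1}}^{\,j} = \mathrm{id}_{X_{j}}$. Both reduce, via associativity of the $\mu_{ji}$ and naturality of the unit isomorphisms, to the fact that $\mu_{i i^{-1}} \circ (x_{i} \otimes x_{i^{-1}}) : \mathbf{1} \to X_{i i^{-1}} = X_{0}$ equals $e_{0}$ --- which holds because $X_{0}$ is connected, so $\Ho(\V)(\mathbf{1},X_{0})$ has a single element --- together with the unit axiom $\mu_{j0} \circ (\mathrm{id}_{X_{j}} \otimes e_{0}) = r_{X_{j}}$. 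Specialising to $j = 0$ then exhibits $t_{i}^{\,0} : X_{0} \to X_{i}$ and $t_{i^{-1}}^{\,i} : X_{i} \to X_{0}$ as mutually inverse isomorphisms in $\Ho(\V)$, so $X_{0}$ and $X_{i}$ are weakly equivalent; as $i$ is arbitrary, this finishes the argument.

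The main obstacle is keeping everything honest at the level of $\Ho(\V)$: in $\V$ itself there need not be any morphism $\mathbf{1} \to X_{i}$, so the points $x_{i}$ and the translations $t_{i}^{\,j}$ live in the homotopy category, and one must check that the monoid axioms for $\mu$, the component morphisms $\mu_{ji}$, and the inclusion datum $e = (X_{0} \hookrightarrow X) \circ e_{0}$ all survive there. This is precisely where Assumption \ref{assumption_V} (that $\V$ is cartesian with cofibrant unit, so $\gamma : \V \to \Ho(\V)$ is monoidal) and the pullback description of the $X_{i}$ in Assumption \ref{assumption} enter; granting these, the remaining verifications are routine diagram chases with the associativity and unit constraints.
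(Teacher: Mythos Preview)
Your argument is correct and follows essentially the same route as the paper: both construct a right-translation morphism $X_{j} \cong X_{j}\otimes\mathbf{1} \to X_{j}\otimes X_{i} \to X_{ji}$ in $\Ho(\V)$ using the unique point of $\pi_{0}X_{i}$, and conclude that each $X_{i}$ is isomorphic to the unit component. The paper's proof is terser---it simply writes down the map $X_{i}\to X_{e}$ and asserts it is an isomorphism in $\Ho(\V)$---whereas you explicitly build the inverse and verify the two-sided identity using connectedness of $X_{0}$; this extra care is warranted but does not constitute a different approach.
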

\begin{proof}
Denote the unit of $\pi_{0}X$ by $e$.
For an element $i$ of $\pi_{0}X$, the connected components $\pi_{0}(X_{i^{-1}})=\Ho(\V)(1,X_{i^{-1}})$ consists of a single point.
The following composition
$$X_{i} \cong X_{i} \otimes \mathbf{1} \stackrel{1 \otimes i^{-1}}{\longrightarrow} X_{i} \otimes X_{i^{-1}} \stackrel{\mu_{ii^{-1}}}{\longrightarrow} X_{e}$$
is an isomorphism in $\Ho(\V)$. Thus, any connected objects $X_{i}$ is weakly equivalent to $X_{e}$.
\end{proof}

\begin{theorem}\label{product2}
Suppose that $\V$ is a monoidal model category with a measure preserving fibrations and it satisfies Assumption \ref{assumption}.
Let $p : \E \to \B$ be a naive fibration between fibrant $\V$-categories admitting Euler characteristic, and also any fiber admits Euler characteristic. 
If both $\pi_{0}\E$ and $\pi_{0}\B$ are groupoids and $\B$ is connected, we have 
$$\chi(\E) = \chi(\B)\chi(\F)$$
where $\F$ is the fiber of $p$ over an object of $\B$.
\end{theorem}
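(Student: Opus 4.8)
The plan is to reduce Theorem \ref{product2} to Theorem \ref{product} by decomposing $\B$ into strongly connected pieces, using the extra structure guaranteed by Assumption \ref{assumption}. First I would observe that since $\pi_{0}\B$ is a connected groupoid, for any fixed object $b$ of $\B$ and any object $y$ of $\B$, Lemma \ref{weakly_equivalent} gives that $\B(b,y)$ is weakly equivalent to $\B(b,b)$, so all hom-objects of $\B$ have the same number of connected components, say $m = \#\pi_{0}(\B(b,b))$. The point is that $\B$ is connected but not necessarily strongly connected precisely because $m$ may exceed $1$; the strongly connected case $m=1$ is Theorem \ref{product}. The strategy is to "unfold" $\B$ along these components: build a $\V$-category $\widetilde{\B}$, DK-equivalent to $\B$ but strongly connected, together with a compatible unfolding $\widetilde{\E}$ of $\E$ fitting into a naive fibration $\widetilde{p} : \widetilde{\E} \to \widetilde{\B}$ with the same fibers, and then apply Theorem \ref{product} to $\widetilde{p}$.

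Concretely, I would use Assumption \ref{assumption} to decompose each hom-object $\B(x,y) = \coprod_{k \in \pi_{0}\B(x,y)} \B(x,y)_{k}$ into connected pieces, and use the induced maps $\mu_{ij}$ on components (described just before Lemma \ref{component}) to see that the composition in $\B$ refines to a composition on the disjoint union of these connected pieces. This is exactly the data needed to define $\widetilde{\B}$: its objects are pairs $(x,k)$ where $x \in \B_{0}$ and $k$ records a choice of connected component, with $\widetilde{\B}((x,k),(y,l))$ the appropriate connected summand of $\B(x,y)$. The inclusion of components gives a $\V$-functor $\widetilde{\B} \to \B$; since $\pi_{0}\B$ is a groupoid, Lemma \ref{component} applied componentwise ensures the summands of each $\B(x,y)$ are all weakly equivalent, so $\widetilde{\B} \to \B$ is a DK-equivalence and $\widetilde{\B}$ is strongly connected. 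Pulling $\E$ back along $\widetilde{\B} \to \B$ (or performing the analogous unfolding on $\E$, using that $\pi_{0}\E$ is also a groupoid) yields $\widetilde{\E}$ with a naive fibration $\widetilde{p} : \widetilde{\E} \to \widetilde{\B}$, and because the fiber of $p$ over $b$ is computed from the pullback over $1_{b} : \mathbf{1} \to \B(b,b)$ which lands in a single connected summand, the fiber $\widetilde{\F}$ of $\widetilde{p}$ is DK-equivalent to $\F$ (here Lemma \ref{fiber2} or Corollary \ref{pi_0} is the relevant tool).

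Then the computation is immediate: by the weak-equivalence invariance theorem (the unnumbered Theorem in Section 3.2, which only requires DK-equivalence and the existence of an Euler characteristic on one side), $\chi(\widetilde{\B}) = \chi(\B)$, $\chi(\widetilde{\E}) = \chi(\E)$, and $\chi(\widetilde{\F}) = \chi(\F)$; and since $\widetilde{p}$ is a naive fibration between fibrant $\V$-categories with $\pi_{0}\widetilde{\E}$, $\pi_{0}\widetilde{\B}$ groupoids and $\widetilde{\B}$ strongly connected, Theorem \ref{product} gives $\chi(\widetilde{\E}) = \chi(\widetilde{\B})\chi(\widetilde{\F})$. Combining these equalities yields $\chi(\E) = \chi(\B)\chi(\F)$. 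Alternatively, and perhaps more cleanly, one can avoid the unfolding construction and argue directly as in the proof of Theorem \ref{product}: decompose $\E = \coprod_{i} \E_{i}$ into connected subcategories, pick $x_{i} \in \E_{i}$, use $\chi(\E_{i}) = |\E(x_{i},x_{i})|^{-1}$ from Lemma \ref{groupoid}, apply the fibration property to $\E(x_{i},x_{i}) \to \B(px_{i},px_{i})$ — but now $\B(px_{i},px_{i})$ is not connected, so one must instead invoke that the measure preserves fibrations over a base decomposed into connected components, writing $|\E(x_{i},x_{i})| = \sum_{k} |\B(px_{i},px_{i})_{k}| \cdot |\F_{ik}|$ — and then package the resulting sum using that all the connected summands of $\B(b,b)$ are weakly equivalent (Lemma \ref{component}) so have equal measure, and that $\chi(\B) = (\#\pi_{0}\B(b,b) \cdot |\B(b,b)_{e}|)^{-1} \cdot (\text{something})$. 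I expect the main obstacle to be bookkeeping: correctly matching the connected summands $\F_{ik}$ of the fiber with the connected components of $\B(b,b)$ and verifying that the naive fibration hypothesis together with Assumption \ref{assumption}(3) really does refine to a fibration on each connected summand, so that the product formula can be applied summand-by-summand and reassembled. The first (unfolding) approach localizes all of this difficulty into the one construction of $\widetilde{\B}$ and its DK-equivalence to $\B$, which is why I would favor it.
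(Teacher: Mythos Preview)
Your ``alternative'' direct approach is essentially what the paper does: decompose $\E = \coprod_i \E_i$, use Lemma~\ref{groupoid} to get $\chi(\E_i) = |\E(x_i,x_i)|^{-1}$, decompose $\B(px_i,px_i) = \coprod_j \B(px_i,px_i)_j$ via Assumption~\ref{assumption} and apply the fibration-preserving property of the measure to write $|\E(x_i,x_i)| = \sum_j |\B(px_i,px_i)_j| \cdot |F_j|$, then invoke Lemma~\ref{component} together with Corollary~\ref{stable3} to see that all summands $\B(px_i,px_i)_j$ are weakly equivalent and hence all fibers $F_j$ are weakly equivalent to $F_e \simeq \F(x_i,x_i)$, collapsing the sum to $|\B(px_i,px_i)| \cdot |\F(x_i,x_i)|$. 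The ``bookkeeping'' you anticipate is exactly what the paper carries out, and it goes through without difficulty.

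Your preferred unfolding approach, however, has a genuine gap and cannot be repaired. You want a strongly connected $\widetilde{\B}$ that is DK-equivalent to $\B$. But if $\widetilde{\B}$ is strongly connected then every $\pi_{0}\widetilde{\B}(x,x)$ is a singleton, so $\pi_{0}\widetilde{\B}$ is a groupoid with trivial automorphism groups, hence equivalent to the terminal category. A DK-equivalence would force $\pi_{0}\B$ to be equivalent to this as well, i.e.\ $\pi_{0}\B(b,b)$ would have to be trivial, meaning $\B$ was already strongly connected. Concretely, in your construction the hom-object $\widetilde{\B}((x,k),(y,l))$ is a single connected summand of $\B(x,y)$, and the inclusion of one summand into $\B(x,y)$ is not a weak equivalence when $m = \#\pi_{0}\B(b,b) > 1$; so $\widetilde{\B} \to \B$ is not a DK-equivalence and you cannot transport Euler characteristics across it. The obstacle is not bookkeeping but a structural obstruction: the reduction to Theorem~\ref{product} via unfolding is blocked at the outset, and the direct computation is not merely cleaner but necessary.
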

\begin{proof}
This is shown similarly to Theorem \ref{product}.
We have the finite coproduct decomposition $\E=\coprod_{i} \E_{i}$ for connected $\V$-categories $\E_{i}$ and 
$\B(b,b)=\coprod_{j} \B(b,b)_{j}$ for connected objects $\B(b,b)_{j}$.
The assumption with respect to preserving fibrations of the measure induces the following equation
$$|\E(x_{i},x_{i})| = \sum_{j} |\B(px_{i},px_{i})_{j}| \cdot |F_{j}|,$$
where $F_{j}$ is the fiber of $p : \E(x_{i},x_{i}) \to \B(px_{i},px_{i})$ over $\mathbf{1} \to \B(px_{i},px_{i})$.
If $e$ of $\pi_{0}\B(px_{i},px_{i})$ denotes the unit, then $F_{e}$ is weakly equivalent to $\F(x_{i},x_{i})$.
Consider the following pullback diagrams
\[
\xymatrix{
F_{j} \ar[r]^{} \ar[d] & \E(x_{i},x_{i})_{j} \ar[r] \ar[d] & \E(x_{i},x_{i}) \ar[d]^{p} \\
\mathbf{1} \ar[r]_{} & \B(px_{i},px_{i})_{j} \ar[r] & \B(px_{i},p_{x_{i}}).
}
\]
Similarly to Lemmas \ref{fiber} and \ref{fiber2}, 
$F_{j}$ is weakly equivalent to $F_{e}$ and $\F(x_{i},x_{i})$ since 
$B(px_{i},px_{i})_{j}$ and $B(px_{i},px_{i})_{e}$ are weakly equivalent to each other by Lemma \ref{component}.
Thus we have the following equation
\begin{equation}
\begin{split}
\chi(\E) &=\sum_{i} \chi(\E_{i}) \\ \notag
&= \sum_{i} (|\E(x_{i},x_{i})|)^{-1} \\
&= \sum_{i} \sum_{j} \left( |\B(px_{i},px_{i})_{j}| \cdot |F_{j}| \right)^{-1} \\
&= \sum_{i} \left( \pi_{0}\B(px_{i},px_{i} )^{\sharp}  \cdot |\B(px_{i},px_{i})_{e}| \cdot |\F(x_{i},x_{i})| \right)^{-1} \\
&= \sum_{i} \left( |\B(px_{i},px_{i})| \cdot |\F(x_{i},x_{i})| \right)^{-1} \\
&= \chi(\B)\chi(\F),
\end{split}
\end{equation}
by Lemma \ref{groupoid}, where $\pi_{0}\B(px_{i},px_{i} )^{\sharp}$ is its cardinal.
\end{proof}

\begin{corollary}\label{product_formula}
Suppose that $\V$ is a monoidal model category with a measure preserving fibrations and it satisfies Assumption \ref{assumption}.
Let $p : \E \to \B$ be a naive fibration between fibrant $\V$-categories admitting Euler characteristic, and also any fiber admits Euler characteristic. 
If both $\pi_{0}\E$ and $\pi_{0}\B$ are groupoids and $\B$ is a finite coproduct of $\coprod_{i} \B_{i}$ for connected $\V$-categories $\B_{i}$, we have 
$$\chi(\E) = \sum_{i} \chi(\B_{i})\chi(\F_{i})$$
where $\F_{i}$ is the fiber of $p$ over an object of $\B_{i}$.
\end{corollary}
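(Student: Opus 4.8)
The plan is to deduce this corollary from Theorem~\ref{product2} by decomposing the total space along the coproduct decomposition of the base. First I would observe that since $\B = \coprod_i \B_i$ with each $\B_i$ connected, and since $p : \E \to \B$ is a naive fibration, one can pull back $p$ over each inclusion $\B_i \hookrightarrow \B$. Concretely, let $\E_i$ denote the full $\V$-subcategory of $\E$ on the objects lying over $(\B_i)_0$; because $\B(b,b') = \phi$ whenever $b,b'$ lie in different components of $\pi_0\B$ (Assumption~\ref{assumption_V}(2)) and $\E(x,y) \to \B(px,py)$ being a fibration forces $\E(x,y)=\phi$ when its target is $\phi$ — using here that the fibration lies over $\phi$, so the fiber is $\phi$, hence $\E(x,y)=\phi$ — one gets $\E = \coprod_i \E_i$, and the restriction $p_i : \E_i \to \B_i$ is again a naive fibration between fibrant $\V$-categories.

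Next I would check that the hypotheses of Theorem~\ref{product2} transfer to each $p_i : \E_i \to \B_i$: the base $\B_i$ is connected by construction; $\pi_0\E_i$ and $\pi_0\B_i$ are full subcategories of the groupoids $\pi_0\E$ and $\pi_0\B$ on a union of connected components, hence are themselves groupoids; $\E_i$ and $\B_i$ admit Euler characteristic because the similarity matrix of a coproduct is block-diagonal, so a (co)weighting on $\E$ restricts to one on each $\E_i$ (and likewise for $\B$); and fibrant objects are closed under passage to full subcategories since fibrancy is a pointwise condition on hom-objects by Definition~\ref{canonical_model}(1). The fiber of $p_i$ over an object of $\B_i$ coincides with $\F_i$, the fiber of $p$ over that same object, since the fiber only sees hom-objects between preimages of the chosen base object, all of which lie in $\E_i$.

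Then Theorem~\ref{product2} applies to each $p_i$ and gives $\chi(\E_i) = \chi(\B_i)\chi(\F_i)$. Finally I would sum over $i$: by the second part of the Proposition on coproducts, $\chi(\E) = \sum_i \chi(\E_i) = \sum_i \chi(\B_i)\chi(\F_i)$, which is the desired formula. The only mildly delicate point is making sure the fiber admits Euler characteristic for each $i$ — but this is covered by the blanket hypothesis that \emph{any} fiber of $p$ admits Euler characteristic, and $\F_i$ is such a fiber — and that the decomposition $\E=\coprod_i\E_i$ is genuinely \emph{finite}, which follows from $\B$ being a finite coproduct. I do not expect a serious obstacle here; the corollary is essentially bookkeeping on top of Theorem~\ref{product2}, the substantive work having been done there.
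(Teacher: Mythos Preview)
Your proposal is correct and follows essentially the same approach as the paper: define $\E_i$ as the full subcategory over $(\B_i)_0$, obtain the decomposition $\E=\coprod_i\E_i$ with each $\E_i\to\B_i$ a naive fibration over a connected base, apply Theorem~\ref{product2} to each piece, and sum. You are in fact more careful than the paper about checking that the hypotheses of Theorem~\ref{product2} transfer to each $p_i$. One small remark: your justification that $\E(x,y)=\phi$ when $\B(px,py)=\phi$ via ``the fiber is $\phi$'' is a bit circular; the cleaner argument is that the morphism $\E(x,y)\to\phi$ induces $\pi_0\E(x,y)\to\pi_0\phi=\emptyset$, so $\pi_0\E(x,y)=\emptyset$ and hence $\E(x,y)=\phi$ by Assumption~\ref{assumption_V}(2) --- the fibration property is not needed for this step.
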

\begin{proof}
Let $\E_{i}$ be the full subcategory of $\E$ whose set of objects is the inverse image 
$p^{-1}((\B_{i})_{0})$. Then $\E$ is decomposed as the coproduct $\coprod_{i} \E_{i}$ and
\[
\xymatrix{
\E_{i} \ar[r] \ar[d] & \E \ar[d]^{p} \\
\B_{i} \ar[r] & \B \\
}
\]
is a pullback diagram. 
The left-hand vertical morphism $\E_{i} \to \B_{i}$ is a fibration over the connected base $\B_{i}$ with the fiber $\F_{i}$.
Theorem \ref{product2} shows $\chi(\E_{i})= \chi(\B_{i})\chi(\F_{i})$ and the conclusion 
$\chi(\E) = \sum_{i} \chi(\B_{i})\chi(\F_{i})$.
\end{proof}
\subsection{Examples}

\begin{enumerate}\label{example}
\item Let $(\Set,\times, *)$ be the category of sets equipped with the trivial model structure.
Denote the full subcategory consisting of finite sets by $\mathbf{set}$, and
define a measure $\sharp : \mathbf{set} \to \Z \subset \Q$ by the cardinality of sets.
A category enriched by $\Set$ is a small category. It gives the Euler characteristic of finite categories that coincides with the one defined in \cite{Lei08a}.
The category of small categories admits the canonical model structure coinciding with the folk model structure.
Theorem \ref{product_formula} implies that an isofibration between finite groupoids satisfies the product formula.
\item Let $(\Cat,\times, *)$ be the category of small categories equipped with the folk model structure.
Denote the full subcategory consisting of finite categories admitting Leinster's Euler characteristic \cite{Lei08a} by $\mathbf{cat}$, and define a measure $\chi : \mathbf{cat} \to \Q$ as the Euler characteristic of finite categories. This gives the Euler characteristic of 2-categories.
Theorem \ref{product_formula} implies that a fibration of 2-categories \cite{Lac02} between finite 2-groupoids satisfies the product formula.
A 2-groupoid $G$ is a 2-category all of whose 1-morphisms and 2-morphisms are invertible.
For an object $x$ of $G$, define $\pi_{1}(G,x)$ as $\pi_{0}G(x,x)$ and $\pi_{2}(G,f)$ as the set of 2-morphisms from a 1-morphism $f$ to itself. Lemma \ref{groupoid} implies that 
$$\chi(G)= \sum_{[x] \in \pi_{0}G}\left( \sum_{[y] \in \pi_{1}(G,x)} (\pi_{2}(G,y)^{\sharp})^{-1}\right)^{-1},$$
where $(-)^{\sharp}$ is its cardinal.
Furthermore, Lemma \ref{component} shows that $\pi_{2}(G,y)^{\sharp} = \pi_{2}(G,1_{x})^{\sharp}$ for any element $[y]$ of $\pi_{1}(G,x)$.
Hence,
$$\chi(G)= \sum_{[x] \in \pi_{0}G} \frac{\pi_{2}(G,1_{x})^{\sharp}}{\pi_{1}(G,x)^{\sharp}}.$$
\item Let $(\Top, \times, *)$ be the category of topological categories equipped with the classical model structure.
Denote the full subcategory consisting of spaces having the homotopy type of a finite CW complex by $\mathbf{cw}$, and
define a measure $\chi : \mathbf{cw} \to \Z \subset \Q$ by the topological Euler characteristic. This gives the Euler characteristic of topological categories. 
We will examine this case deeply in the next section.
\end{enumerate}

\section{The classifying spaces of topological categories}

\subsection{The geometric realization of simplicial spaces}
The classifying space of a topological category is introduced in \cite{Seg68}.
It is defined as the geometric realization of a simplicial space called the nerve.

\begin{definition}
The category $\Delta$ consists of totally ordered sets 
$$[n]=\{0< 1 < 2 < \dots <n\}$$
for $n \geqq 0$ as objects and order preserving maps between them as morphisms.
A {\em simplicial space} is a functor from $\Delta^{\op}$ to the category $\Top$ of spaces
$$\Delta^{\op} \longrightarrow \Top.$$
For a simplicial space $X$ and an order preserving map $\varphi : [n] \to [m]$, 
let $\varphi_{*}$ denote $X(\varphi) : X_{m} \to X_{n}$.
The category $\Top^{\Delta^{\op}}$ of simplicial spaces is defined as the functor category from $\Delta^{\op}$ to $\Top$.
Let $\Delta_{+}$ denote the subcategory of $\Delta$ having the same objects as $\Delta$ 
and injective order preserving maps as morphisms. If $n>m$, there is no morphism $[n] \to [m]$ in $\Delta_{+}$.
A {\em $\Delta$-space} is a functor 
$$\Delta_{+}^{\op} \longrightarrow \Top.$$
Let $\Top^{\Delta^{\op}_{+}}$ denote the category of $\Delta$-spaces.
The canonical inclusion functor $\Delta_{+} \to \Delta$ induces a functor
$$\flat : \Top^{\Delta^{\op}} \longrightarrow \Top^{\Delta^{\op}_{+}}.$$
\end{definition}

A simplicial space can be written as a sequence of spaces 
$X_{n}$ equipped with face maps $d_{j} : X_{n} \to X_{n-1}$ and degeneracy maps $s_{i} : X_{n-1} \to X_{n}$ 
satisfying the simplicial identities (see \cite{May92}). 
Similarly, a $\Delta$-space is a sequence of spaces equipped with only face maps.
The above functor $\flat$ makes simplicial spaces forget their degeneracy maps.

\begin{definition}
A cosimplicial space is a functor $\Delta \to \Top$.
The standard cosimplcial space is defined by the standard $n$-simplex $\Delta^{n}$ for $n \geqq 0$ and maps
$\varphi^{*} : \Delta^{n} \to \Delta^{m}$ for each $\varphi : [n] \to [m]$ given by the linear extension of 
the map $v_{i} \mapsto v_{\varphi(i)}$ on vertices of standard simplices.
\end{definition}

\begin{definition}[Geometric realization]
Let $X$ be a simplicial space.
The {\em geometric realization} $|X|$ of $X$ is the space defined by
$$|X| = \left( \coprod_{n \geqq 0} \Delta^{n} \times X_{n} \right) / (t,\varphi_{*}(x)) \sim (\varphi^{*}(t),x) $$ 
for all order preserving maps $\varphi : [n] \to [m]$ and points $x$ in $X_{m}$ and $t$ in $\Delta^{n}$.
Similarly, the \textit{geometric realization} of a $\Delta$-space $Y$ is defined by 
$$||Y|| = \left( \coprod_{n \geqq 0} \Delta^{n} \times Y_{n} \right)/(t,\varphi_{*}(y)) \sim (\varphi^{*}(t),y) $$ 
for all injective order preserving maps $\varphi : [n] \to [m]$ and points $y$ in $Y_{m}$ and $t$ in $\Delta^{n}$.
The {\em fat realization} $||X||$ of a simplicial space $X$ is defined as the realization $||X^{\flat}||$ 
of the $\Delta$-space $X^{\flat}$.
We obtain the canonical projection $||X|| \to |X|$ by taking the quotient of the degenerate part.
\end{definition}

The fat realization is introduced by Segal in \cite{Seg74}.
Compared with the normal geometric realization, the fat realization is easy to treat in homotopy theory.
The following two properties do not hold in the general case of the normal geometric realization.

\begin{proposition}[Proposition A.1 of \cite{Seg74}]\ 
\begin{enumerate}
\item Let $f : X \to Y$ be a map of simplicial spaces which is degreewise a homotopy equivalence. Then, the induced map 
$||f|| : ||X|| \to ||Y||$ is a homotopy equivalence.
\item If $X$ is a simplicial space which is degreewise of the homotopy type of a CW-complex, then so is $||X||$.
\end{enumerate}
\end{proposition}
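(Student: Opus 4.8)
The plan is to use the skeletal filtration of the fat realization and to reduce both statements to standard facts about closed cofibrations. For a simplicial space $X$, write $F_{n}X$ for the image in $||X||$ of $\coprod_{k \leq n} \Delta^{k} \times X_{k}$. The first step is to establish that there is a pushout square
\[
\xymatrix{
\partial\Delta^{n} \times X_{n} \ar[r] \ar@{^{(}->}[d] & F_{n-1}X \ar@{^{(}->}[d] \\
\Delta^{n} \times X_{n} \ar[r] & F_{n}X
}
\]
in which the vertical maps are closed cofibrations. The top horizontal map is built from the face maps $d_{j} : X_{n} \to X_{n-1}$ via the defining identifications $(\varphi^{*}t, x) \sim (t, \varphi_{*}x)$ with $\varphi = \delta^{j}$; well-definedness on the intersections of the faces $\delta^{j}(\Delta^{n-1})$ is exactly the content of the simplicial identities. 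The point where the fat realization is easier than the ordinary one is that no degeneracies are imposed, so the interior of $\Delta^{n} \times X_{n}$ acquires no further identifications and the square really is a pushout. Since $\partial\Delta^{n} \hookrightarrow \Delta^{n}$ is a closed cofibration and we work in compactly generated spaces, $\partial\Delta^{n} \times X_{n} \hookrightarrow \Delta^{n} \times X_{n}$ is again a closed cofibration, hence so is $F_{n-1}X \hookrightarrow F_{n}X$; and $||X|| = \mathrm{colim}_{n} F_{n}X$.

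For part (1), a map $f : X \to Y$ induces a map of such pushout squares in each degree, and I would prove by induction on $n$ that $F_{n}f : F_{n}X \to F_{n}Y$ is a homotopy equivalence. The base case is $F_{0}f = f_{0}$, a homotopy equivalence by hypothesis. For the inductive step, $1 \times f_{n} : \Delta^{n} \times X_{n} \to \Delta^{n} \times Y_{n}$ and $1 \times f_{n} : \partial\Delta^{n} \times X_{n} \to \partial\Delta^{n} \times Y_{n}$ are homotopy equivalences, $F_{n-1}f$ is one by induction, and the left legs are cofibrations; the gluing lemma for homotopy equivalences along cofibrations then yields that $F_{n}f$ is a homotopy equivalence. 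Passing to the colimit along the cofibrations $F_{n-1}(-) \hookrightarrow F_{n}(-)$ gives that $||f||$ is a homotopy equivalence.

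For part (2), I would run the same induction, using that the class of spaces with the homotopy type of a CW-complex is closed under: (a) forming $X \cup_{A} B$ when $A \hookrightarrow X$ is a cofibration and $A$, $X$, $B$ all have CW homotopy type (a theorem of Milnor); (b) countable increasing unions along cofibrations. The base case $F_{0}X = X_{0}$ is the hypothesis. In the inductive step, $\Delta^{n}$ and $\partial\Delta^{n}$ are finite CW-complexes, so $\Delta^{n} \times X_{n}$ and $\partial\Delta^{n} \times X_{n}$ have CW homotopy type (a product of a finite CW-complex with a space of CW homotopy type), $F_{n-1}X$ does by induction, and (a) applied to the pushout square gives it for $F_{n}X$; then (b) finishes.

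The main obstacle is the bookkeeping behind the pushout square: pinning down exactly which identifications survive in $F_{n}X$, and checking that the attaching map $\partial\Delta^{n} \times X_{n} \to F_{n-1}X$ is well-defined and continuous. Once the square and its cofibration legs are in hand, both parts are formal inductions resting on the gluing lemma and on Milnor's stability results for the CW homotopy type.
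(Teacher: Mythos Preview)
Your proposal is correct and follows essentially the same approach as the paper indicates. The paper does not actually prove this proposition itself; it merely records the skeletal filtration $||X||^{(n)}$ and the pushout square
\[
\xymatrix{
\partial \Delta^{n} \times X_{n} \ar[r] \ar[d] & ||X||^{(n-1)} \ar[d] \\
\Delta^{n} \times X_{n} \ar[r] & ||X||^{(n)}
}
\]
and then says ``This construction is a key point of the proof of the above proposition in \cite{Seg74}.'' Your filtration $F_{n}X$ is exactly the paper's $||X||^{(n)}$, and your inductive arguments via the gluing lemma and Milnor's CW homotopy type stability results are the standard way to complete Segal's sketch, so there is nothing to add.
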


For a simplicial space $X$ and a nonnegative integer $m$, denote
$$||X||^{(m)} = \left( \coprod_{0 \leqq n \leqq m} \Delta^{n} \times X_{n} \right)/(t,\varphi_{*}(x)) \sim (\varphi^{*}(t),x).$$
We have the following pushout diagram
\[
\xymatrix{
\partial \Delta^{n} \times X_{n} \ar[r] \ar[d] & ||X||^{(n-1)} \ar[d] \\
\Delta^{n} \times X_{n} \ar[r] & ||X||^{(n)}
}
\]
and the fat realization $||X||$ is the following sequential colimit
$$||X||^{(0)} \subset ||X||^{(1)} \subset \dots \subset ||X||^{(n)} \subset \cdots .$$
This construction is a key point of the proof of the above proposition in \cite{Seg74}.
Since the geometric realization of a $\Delta$-space has the same construction, 
it has the same properties as the fat realization.

\begin{corollary}\label{delta} \ 
\begin{enumerate}
\item Let $f : X \to Y$ be a map of $\Delta$-spaces which is degreewise a homotopy equivalence. Then, the induced map 
$||f|| : ||X|| \to ||Y||$ is a homotopy equivalence.
\item If $X$ is a $\Delta$-space which is degreewise of the homotopy type of a CW-complex, then so is $||X||$.
\end{enumerate}
\end{corollary}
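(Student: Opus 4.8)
The plan is to transcribe Segal's argument for the fat realization of a simplicial space (Proposition A.1 of \cite{Seg74}) to the geometric realization of a $\Delta$-space, which is legitimate because, as noted above, $||Y||$ for a $\Delta$-space $Y$ is built by exactly the same construction: a sequential colimit
$$||Y||^{(0)} \subset ||Y||^{(1)} \subset \cdots \subset ||Y||^{(n)} \subset \cdots$$
in which each $||Y||^{(n)}$ is obtained from $||Y||^{(n-1)}$ by a pushout along the cofibration $\partial\Delta^{n}\times Y_{n} \to \Delta^{n}\times Y_{n}$. Both statements then follow by induction on the skeleta together with a passage to the colimit.

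For (1), let $f_{n} : X_{n} \to Y_{n}$ denote the degreewise maps, each a homotopy equivalence. I would prove by induction on $m$ that $||f||^{(m)} : ||X||^{(m)} \to ||Y||^{(m)}$ is a homotopy equivalence. The base case $m = 0$ is just $f_{0}$. For the inductive step, $||f||^{(m)}$ is a morphism of pushout squares whose left-hand legs $\partial\Delta^{m}\times X_{m} \to \Delta^{m}\times X_{m}$ and $\partial\Delta^{m}\times Y_{m} \to \Delta^{m}\times Y_{m}$ are cofibrations, whose comparison maps $\mathrm{id}_{\partial\Delta^{m}}\times f_{m}$ and $\mathrm{id}_{\Delta^{m}}\times f_{m}$ are homotopy equivalences (a product with a fixed space preserves homotopy equivalences), and whose comparison map $||f||^{(m-1)}$ is a homotopy equivalence by the inductive hypothesis; the gluing lemma for pushouts along cofibrations then gives that $||f||^{(m)}$ is a homotopy equivalence. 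Since $||X||$ and $||Y||$ are the sequential colimits of the $||X||^{(m)}$ and $||Y||^{(m)}$ along closed cofibrations, and a map of such colimits that is a homotopy equivalence on each term is itself a homotopy equivalence, we conclude that $||f||$ is a homotopy equivalence.

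For (2), the same induction applies: $||X||^{(0)} = X_{0}$ has the homotopy type of a CW complex by hypothesis, and if $||X||^{(m-1)}$ has CW homotopy type, then so does $||X||^{(m)}$, since it is the pushout along a cofibration of $\partial\Delta^{m}\times X_{m}$, $\Delta^{m}\times X_{m}$, and $||X||^{(m-1)}$, all of which have CW homotopy type (the finite complexes $\partial\Delta^{m}$ and $\Delta^{m}$ times a space of CW homotopy type), after replacing the attaching map by a cellular one. A sequential colimit along cofibrations of spaces of CW homotopy type again has CW homotopy type, so $||X||$ does.

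The only delicate point — and it is the same one as in Segal's original proof — is the verification that the inclusions $||X||^{(m-1)} \hookrightarrow ||X||^{(m)}$ and $\partial\Delta^{m}\times X_{m} \hookrightarrow \Delta^{m}\times X_{m}$ are genuine (closed) cofibrations, so that the homotopy invariance of the pushout and of the sequential colimit may be invoked; this is precisely the structure supplied by the pushout diagram and the filtration $||X||^{(0)} \subset ||X||^{(1)} \subset \cdots$ recorded immediately above the statement.
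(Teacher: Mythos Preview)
Your proposal is correct and follows exactly the approach the paper indicates: the paper itself gives no detailed proof of this corollary, but simply observes (in the paragraph preceding it) that Segal's proof of Proposition~A.1 in \cite{Seg74} goes through verbatim because the geometric realization of a $\Delta$-space is built by the same pushout-and-sequential-colimit construction as the fat realization. You have spelled out that inductive argument in more detail than the paper does, but the strategy is identical.
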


Regarding the second property above, 
we observe more explicit cell structure of $||X||$.
We regard the standard simplex $\Delta^{n}$ 
as a natural CW-complex, and denote the $n$-face $\mathrm{Int}(\Delta^{n})$ by $\tau^{n}$.

\begin{definition}
Let $X$ be a cell complex. The set of faces of $X$ is denoted by $P(X)$.
We give a partial order on $P(X)$ defined by $e_{\lambda} \leqq e_{\mu}$ if $ e_{\lambda} \subset \overline{e_{\mu}}$ for any $e_{\lambda}$ and $e_{\mu}$ of $P(X)$.
We call $P(X)$ the \textit{face poset} of $X$.
\end{definition}

\begin{proposition}\label{homotopy_type}
Let $X$ be a $\Delta$-space which is degreewise of the homotopy type of a CW-complex. 
Then, $||X||$ has the homotopy type of a CW-complex consisting of cells
$\tau^{n} \times \sigma$ for $\sigma \in P(X_{n})$ and $n \geqq 0$.
\end{proposition}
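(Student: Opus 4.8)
The plan is to build the cell structure of $||X||$ inductively along the skeletal filtration $||X||^{(0)} \subset ||X||^{(1)} \subset \cdots$ already exhibited before Corollary \ref{delta}, using the pushout squares
\[
\xymatrix{
\partial \Delta^{n} \times X_{n} \ar[r] \ar[d] & ||X||^{(n-1)} \ar[d] \\
\Delta^{n} \times X_{n} \ar[r] & ||X||^{(n)}.
}
\]
First I would reduce to the case where each $X_{n}$ is itself a CW-complex rather than merely of the homotopy type of one: by Corollary \ref{delta}(1), replacing $X$ degreewise by a CW-approximation (compatibly with the face maps, which can be arranged since $\Delta_{+}$ is a direct category and CW-approximation can be performed functorially on a direct diagram) yields a $\Delta$-space $X'$ with a degreewise homotopy equivalence $X' \to X$, hence $||X'|| \simeq ||X||$. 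So it suffices to produce the claimed cell structure when every $X_{n}$ is a CW-complex with face set $P(X_{n})$.

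Next I would analyze a single pushout step. Using the standard CW-structure on $\Delta^{n}$ with open cells indexed by the nonempty faces (so $\Delta^{n}$ is the disjoint union of the open faces, the top one being $\tau^{n} = \mathrm{Int}(\Delta^{n})$), the product $\Delta^{n} \times X_{n}$ is a CW-complex whose cells are products $\tau^{k} \times \sigma$ for $k \leqq n$ and $\sigma \in P(X_{n})$, and $\partial\Delta^{n} \times X_{n}$ is the subcomplex consisting of those cells with $k < n$. Thus the pushout attaches exactly the cells $\tau^{n} \times \sigma$ for $\sigma \in P(X_{n})$ to $||X||^{(n-1)}$. The key point to verify is that the gluing is cellular, i.e. that the attaching map of $\tau^{n} \times \sigma$, namely the composite $\partial(\tau^n\times\sigma) \hookrightarrow \partial\Delta^n\times X_n \to ||X||^{(n-1)}$, lands in the $(\dim\tau^n\times\sigma - 1)$-skeleton of $||X||^{(n-1)}$. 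This follows because the face maps $\varphi_{*} : X_{n} \to X_{k}$ are cellular (they are maps of CW-complexes in the reduced situation) and the cosimplicial maps $\varphi^{*} : \Delta^{k}\to\Delta^{n}$ carry $\tau^k$ into lower-dimensional faces; combining, one sees the identification $(t,\varphi_*(x))\sim(\varphi^*(t),x)$ in $||X||^{(n-1)}$ only ever identifies a cell $\tau^n\times\sigma$-boundary piece with cells of strictly smaller dimension, so the filtration by $\bigcup_{\dim(\tau^n\times\sigma)\le m}$ is a genuine CW-filtration. Passing to the colimit over $n$, the cells of $||X||$ are precisely the $\tau^{n}\times\sigma$ with $\sigma\in P(X_{n})$, $n\geqq 0$.

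The main obstacle is the bookkeeping in the previous paragraph: making precise that after passing to the sequential colimit the resulting filtration of $||X||$ really is a CW-filtration, i.e. that each attaching map hits only finitely many lower cells and that the weak/colimit topology agrees with the CW topology. This is exactly the content already used by Segal to prove Corollary \ref{delta}(2), so I would cite that argument and only add the refinement that the cells can be indexed as $\tau^n\times\sigma$; concretely, one checks that $||X||^{(n)}$ is obtained from $||X||^{(n-1)}$ by attaching cells indexed by $P(X_n)$ (with dimension shifts), which is immediate from the pushout square once $\Delta^n\times X_n$ and $\partial\Delta^n\times X_n$ are given their product CW-structures. A minor secondary point is the functorial CW-approximation of the $\Delta$-diagram in the first paragraph; since $\Delta_+^{\op}$ has no nontrivial idempotents and the degeneracies have been discarded, one can build the approximation one dimension at a time, which avoids any coherence difficulty.
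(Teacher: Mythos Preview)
Your overall strategy---induction along the skeletal filtration using the pushout squares---is the same as the paper's. The difference, and the gap, is in how you obtain cellularity of the attaching map $\partial\Delta^{m}\times X_{m}\to ||X||^{(m-1)}$.

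You first replace $X$ by a degreewise CW $\Delta$-space $X'$, and then assert that the face maps $\varphi_{*}:X'_{n}\to X'_{k}$ are cellular, justifying this only by the parenthetical ``they are maps of CW-complexes in the reduced situation''. That inference is not valid: a continuous map between CW-complexes need not be cellular, and a functorial CW-approximation of a $\Delta_{+}$-diagram produces a diagram of CW-complexes with merely continuous structure maps unless you do substantially more work (e.g.\ a Reedy-cofibrant replacement and an argument that the latching maps are relative cell inclusions). Without cellular face maps your claim that the attaching map lands in the correct skeleton of $||X||^{(m-1)}$ is unsupported.

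The paper sidesteps this entirely: at the inductive step it simply invokes the cellular approximation theorem to replace the attaching map $f:\partial\Delta^{m}\times X_{m}\to ||X||^{(m-1)}$ by a homotopic cellular map. The resulting pushout is then a genuine CW-complex homotopy equivalent to $||X||^{(m)}$, with the new cells exactly $\tau^{m}\times\sigma$ for $\sigma\in P(X_{m})$. This is why the conclusion is only ``has the homotopy type of'' such a CW-complex. Your reduction step (replacing $X$ by $X'$) is unnecessary once you use cellular approximation on $f$ directly, and it introduces the unjustified cellularity assumption that the paper's argument avoids.
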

\begin{proof}
We use an induction on $m$ for $||X||^{(m)}$.
When $m=0$, the space $||X||^{(0)}=\Delta^{0} \times X_{0}$ 
is a CW-complex consisting of $\tau^{0} \times \sigma$ for $\sigma \in P(X_{0})$.
Assume that $||X||^{(m-1)}$ is a CW-complex consisting of 
$\tau^{n} \times \sigma$ for $\sigma \in P(X_{n})$ and $0 \leqq n \leqq m-1$.
Consider the following push-out diagram 
\[
\xymatrix{
\partial \Delta^{m} \times X_{m} \ar[r]^{f} \ar[d]_{} & ||X||^{(m-1)} \ar[d] \\
\Delta^{m} \times X_{m} \ar[r] & ||X||^{(m)}.
}
\]
We may assume $f$ to be a cellular map by the cellular approximation theorem.
Since $\Delta^{m} \times X_{m}$ is the product CW-complex and 
$\partial \Delta^{m} \times X_{m}$ is a subcomplex, 
the above push-out diagram implies that $||X||^{m}$ is a CW-complex consisting of 
$\tau^{n} \times \sigma$ for an element $\sigma$ of $P(X_{n})$ and $0 \leqq n \leqq m$.
\end{proof}

\subsection{The Euler characteristic of acyclic categories}

In this subsection, we show that 
the Euler characteristic of an acyclic topological category $\A$ 
coincides with that of the classifying space $B\A$ of $\A$.
Recall that the category of topological spaces is 
equipped with the monoidal model structure and the measure $\chi : \mathbf{cw} \to \Z$ in Example \ref{example}. We use this measure in the rest of this paper.

\begin{definition}
Let $\T$ be a topological category. We say that $\T$ is {\em acyclic} if
each endomorphism space $\T(x,x)$ is a single point for any object $x$ of $\T$ 
and for objects $y$ and $z$ of $\T$ such that $y\not =z$, if $\T(y,z) \not=\phi$, then $\T(z,y) = \phi$.  
\end{definition}

\begin{definition}[Classifying space]
Let $\T$ be a topological category. The \textit{nerve} $N\T$ is a simplicial space defined by 
$$N_{n}\T = \coprod_{x_{i} \in \mathcal{T}_{0}} \T(x_{n},x_{n-1}) \times \T(x_{n-1},x_{n-2}) \times \dots \times \T(x_{1},x_{0}).$$
The face map $d_{j} : N_{n}\T \to N_{n-1}\T$ is given by composing or removing morphisms 
$$
d_{j}(f_{n},\dots,f_{1}) = \begin{cases}
(f_{n},\dots,f_{2}) \ \ & j=0,\\
(f_{n},\dots,f_{j+1} \circ f_{j}, \dots, f_{1}) \ \ & 0<j<n,\\
(f_{n-1},\dots,f_{1}) \ \ & j=n,\\
\end{cases}
$$
and the degeneracy map $s_{i} : N_{n}\T \to N_{n+1}\T$ is given by inserting identity morphism 
$$s_{i}(f_{n},\dots,f_{1}) = (f_{n},\dots, f_{i}, 1, f_{i-1}, \dots f_{1}).$$
The classifying space $B\T$ of $\T$ is defined as 
the geometric realization $|N\T|$ of the nerve $N\T$.
\end{definition}

\begin{definition}
Let $\A$ be an acyclic topological category.
The non-degenerate nerve $\overline{N}\A$ is a $\Delta$-space defined by 
$$\overline{N}_{n}\A = \coprod_{x_{i} \not= x_{i-1}} \T(x_{n},x_{n-1}) \times \T(x_{n-1},x_{n-2}) \times \dots \times \T(x_{1},x_{0}).$$
The face map $d_{j} : \overline{N}_{n}\A \to \overline{N}_{n-1}\A$ 
is given similar to the ordinary nerve $N\A$.
\end{definition}

\begin{theorem}[Lemma B.13 in \cite{Tama}]\label{realization}
If $\A$ is a finite acyclic topological category, 
then the classifying space $B\A$ is homeomorphic to $||\overline{N}\A||$.
\end{theorem}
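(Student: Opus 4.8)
The plan is to compare the two realizations degree by degree, using the standard skeletal filtration of a $\Delta$-space recalled before Corollary~\ref{delta}. Write $N\A$ for the full nerve (a simplicial space) and $\overline{N}\A$ for the non-degenerate nerve (a $\Delta$-space). Since $B\A = |N\A|$ and $||\overline{N}\A|| = ||(N\A)^{\flat}||$ would only give the fat realization of the \emph{whole} nerve, the real content is that, for an acyclic category, throwing away the degenerate simplices before fat-realizing recovers exactly the ordinary realization. The key combinatorial input is acyclicity: because every endomorphism space is a point and there are no nontrivial zigzags $y \rightleftarrows z$, a tuple $(f_n,\dots,f_1) \in N_n\A$ is degenerate (i.e.\ lies in the image of some $s_i$) precisely when some $f_j$ is an identity, equivalently when the underlying chain of objects $x_n,\dots,x_0$ has a repetition $x_j = x_{j-1}$; and in that case the degeneracy is inserted \emph{uniquely}. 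Thus $N_n\A$ decomposes as a disjoint union over surjections $[n]\twoheadrightarrow[m]$ of copies of $\overline{N}_m\A$, and this decomposition is compatible with face maps in the usual Eilenberg--Zilber fashion. This is essentially the statement being cited as Lemma~B.13 of \cite{Tama}, so I would structure the proof to isolate that combinatorial decomposition as the crux.

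First I would make precise the degeneracy analysis: show that for acyclic $\A$ the Eilenberg--Zilber lemma holds space-levelwise, i.e.\ every point of $N_n\A$ is uniquely $\varphi_*(y)$ for a surjection $\varphi\colon[n]\twoheadrightarrow[m]$ in $\Delta$ and a point $y$ in the non-degenerate part $\overline{N}_m\A$. Concretely, given $(f_n,\dots,f_1)$, collapse every maximal run of identities; acyclicity guarantees the result has no identities and records the data of $\overline{N}\A$, and uniqueness follows since distinct surjections produce distinct identity patterns. Second, I would feed this into the skeletal filtrations. The ordinary realization $|N\A|$ has its own skeletal filtration $|N\A|^{(m)}$ built by the pushouts $\partial\Delta^m\times N_m^{\mathrm{nd}}\A \to |N\A|^{(m-1)}$ attaching the non-degenerate $m$-simplices $N_m^{\mathrm{nd}}\A$, by the classical simplicial-space version of Proposition~\ref{homotopy_type}. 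But $N_m^{\mathrm{nd}}\A$ is exactly $\overline{N}_m\A$ by Step~1, and these are precisely the cells $\Delta^m\times\overline{N}_m\A$ attached along $\partial\Delta^m$ in the filtration $||\overline{N}\A||^{(m)}$ defined right before Corollary~\ref{delta}. Third, I would check that the attaching maps agree: both are induced by the face maps $d_j$ of $\overline{N}\A$ on the boundary, so an inductive argument on $m$ produces a compatible family of homeomorphisms $|N\A|^{(m)} \xrightarrow{\cong} ||\overline{N}\A||^{(m)}$, hence a homeomorphism on the colimits (finiteness of $\A$ makes both filtrations stabilize after finitely many stages, so there are no point-set subtleties with the colimit topology).

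The main obstacle is Step~1 — the levelwise Eilenberg--Zilber decomposition of $N\A$ into degenerate pieces indexed by surjections. For a simplicial \emph{set} this is classical, but here $N_n\A$ is a topological space (a coproduct of products of hom-spaces), so one must verify that the decomposition by "identity pattern" is a genuine decomposition into clopen subspaces, that the resulting bijection $\coprod_{[n]\twoheadrightarrow[m]}\overline{N}_m\A \to N_n\A$ is a homeomorphism, and that acyclicity is exactly what prevents, e.g., a nonidentity endomorphism or a back-and-forth pair from masquerading as a degenerate simplex. Once that clopen decomposition is in hand the rest is a routine induction comparing two pushout-built filtrations, and I would not belabor it. It is worth remarking that without acyclicity the statement genuinely fails — the fat realization of $N\A$ differs from $B\A$ in general — so the proof must and does use the hypothesis essentially at exactly this point.
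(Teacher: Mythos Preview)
The paper does not supply its own proof of this statement: it is recorded as a citation of Lemma~B.13 in \cite{Tama} and no argument is given in the text. So there is nothing in the present paper to compare your proposal against.

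That said, your outline is the standard and correct route to the result. The essential point you isolate --- that for an acyclic topological category the Eilenberg--Zilber decomposition holds levelwise as a genuine topological coproduct --- is exactly right, and it follows immediately from the way the nerve is defined: $N_{n}\A$ is already presented as the coproduct $\coprod_{x_{0},\dots,x_{n}} \A(x_{n},x_{n-1})\times\cdots\times\A(x_{1},x_{0})$, and the summands with some $x_{i}=x_{i-1}$ are, by acyclicity, precisely the degenerate pieces (each such factor $\A(x_{i},x_{i-1})$ is the one-point identity space). Thus the degenerate/non-degenerate split is a clopen decomposition with no further work, and your worry in the final paragraph dissolves. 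The inductive comparison of skeletal pushouts then goes through verbatim, and finiteness of $\A$ makes both filtrations terminate. Your remark that acyclicity is used essentially --- to rule out nonidentity endomorphisms or back-and-forth pairs that would spoil the uniqueness of the degeneracy pattern --- is also on the mark.
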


Let $\mathcal{A}$ be a finite measurable acyclic topological category. 
We give a partial order on the set of objects $\A_{0}$ such that $a \leqq b$ if $\A(a,b)$ is not empty.
For simplicity, let $\chi_{a,b}$ denote the Euler characteristic of $\A(a,b)$. 
If $a \not \leqq b$, we have $\chi_{a,b}=0$ since $\A(a,b)$ is empty.

\begin{lemma}\label{chi_T}
The Euler characteristic of $\mathcal{A}$ is
$$\chi(\mathcal{A})=\sum^{\infty}_{j=0} \sum_{a_0< \cdots <a_j} (-1)^j \chi_{a_{j-1},a_{j}}\cdots \chi_{a_0,a_{1}}. $$
\end{lemma}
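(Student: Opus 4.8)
The plan is to compute the Euler characteristic via the inversion formula of Remark \ref{inversion}, by writing down the inverse of the similarity matrix $\xi$ explicitly. Since $\A$ is acyclic and the partial order $\leqq$ on $\A_0$ has $\xi(a,b)=\chi_{a,b}=0$ unless $a\leqq b$, the matrix $\xi$ is ``upper triangular'' with respect to this order, and its diagonal entries are $\xi(a,a)=|\A(a,a)|=|*|=1$. Hence $\xi$ is invertible over $\Z\subseteq\Q$, so by Remark \ref{inversion} we get $\chi(\A)=\sum_{a,b}\xi^{-1}(a,b)$ and it suffices to identify $\xi^{-1}$.

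The key step is to recognise $\xi$ as $I+M$ where $M(a,b)=\chi_{a,b}$ for $a<b$ and $M(a,b)=0$ otherwise. Because the order is a finite strict partial order, $M$ is nilpotent: $M^{j}(a,b)$ is a sum over strictly increasing chains $a=a_0<a_1<\cdots<a_j=b$ of the products $\chi_{a_{j-1},a_j}\cdots\chi_{a_0,a_1}$, and this vanishes once $j$ exceeds the length of the longest chain in $\A_0$. Therefore the Neumann series terminates and
\begin{equation}
\xi^{-1}=(I+M)^{-1}=\sum_{j=0}^{\infty}(-1)^{j}M^{j}.\notag
\end{equation}
Summing all entries of $M^{j}$ gives $\sum_{a_0<\cdots<a_j}\chi_{a_{j-1},a_j}\cdots\chi_{a_0,a_1}$ (where the $j=0$ term contributes $\sum_{a}1=\A_0^{\sharp}$, matching the empty-chain convention), and summing over $j$ with the signs $(-1)^j$ yields exactly the claimed formula. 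One should also note that $\A$ being measurable guarantees each $\A(a,a)$ lies in $\mathbf{cw}$ and has $\chi=1$, and that the finitely many products appearing are all defined since $\A_0$ is finite.

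I expect the main obstacle to be purely bookkeeping rather than conceptual: one must be careful that the product $\chi_{a_{j-1},a_j}\cdots\chi_{a_0,a_1}$ appearing in $M^j(a_0,a_j)$ is indexed in the correct order (the composition convention $\zeta\xi(i,h)=\sum_j\zeta(i,j)\xi(j,h)$ from the introduction must be tracked through the powers of $M$), and that the restriction to \emph{strictly} increasing chains is automatic because $M$ has zero diagonal. A secondary point to verify is that the double sum over $j$ and over chains is actually finite, which follows from nilpotency of $M$ as above; this justifies interchanging the order of summation and writing the answer as the stated iterated sum. Once these indexing matters are settled, the computation is immediate from Remark \ref{inversion}.
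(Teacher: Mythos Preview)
Your argument is correct and is essentially the same M\"obius-inversion idea as the paper's proof, packaged slightly differently. The paper writes down the coweighting
\[
v(a)=\sum_{j\geq 0}\ \sum_{a_0<\cdots<a_{j-1}<a_j=a}(-1)^j\,\chi_{a_{j-1},a}\cdots\chi_{a_0,a_1}
\]
directly and checks $v\xi=u^{*}$ by a telescoping computation, then sums $v$ over $\A_0$. You instead observe $\xi=I+M$ with $M$ strictly upper triangular and nilpotent, invert via the terminating Neumann series, and apply Remark~\ref{inversion}. Your route has the mild advantage that invertibility of $\xi$ yields both a weighting and a coweighting at once, so the existence of $\chi(\A)$ is immediate; the paper's direct verification is more hands-on but produces exactly the same sum. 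Since the target ring is $\Z\subset\Q$ (commutative), your concern about the order of the factors $\chi_{a_{i-1},a_i}$ in $M^{j}$ is harmless.
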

\begin{proof}
Define $v:\mathcal{A}_0\to \mathbb{Z}\subset \mathbb{Q}$ by  
 $$v(a)=\sum^{\infty}_{j=0} \sum_{a_0<\cdots<a_{j-1}<a_{j}=a} (-1)^j \chi_{a_{j-1},a} \cdots \chi_{a_0,a_{1}} $$ 
for any $a \in \A_{0}$. Let $\ell(a)$ denote the length of $a \in \A_{0}$ defined as the maximum length of sequences 
$a_{0}< a_{1} < \cdots <a_{k}=a$ ending at $a$.
For an object $b \in \A_{0}$, since $\chi_{b,b}=1$, the following equation holds:
\begin{eqnarray*}
\sum_{a \in \A_{0}}v(a) \xi_{\A}(a,b)
&=&\sum_{a \in \A_{0}} \sum_{j=0}^{\infty} \sum_{a_0<\cdots<a_{j-1}<a_{j}=a} (-1)^{j}\chi_{a,b}\chi_{a_{j-1},a} \cdots \chi_{a_{0},a_{1}} \\ \notag
&=&\sum_{j=0}^{\infty} \sum_{a_{0}<\cdots<a_{j-1}<a_{j} \leqq b} (-1)^{j}\chi_{a,b}\chi_{a_{j-1},a} \cdots \chi_{a_{0},a_{1}} \\
&=&\sum_{a_{0} \leqq b} \chi_{a_{0},b}+(-1)\sum_{a_{0}<a_{1} \leqq b}\chi_{a_{1},b}\chi_{a_{0},a_{1}} + \cdots \\
&&+(-1)^{\ell(b)} \sum_{a_{0}< \cdots <a_{\ell(b)}=b} \chi_{a_{\ell(b)},b}\cdots \chi_{a_{0},a_{1}} \\
&=&\chi_{b,b}=1.
\end{eqnarray*}
Hence, $v$ is a coweighting on $\A$, and we have $$\chi(\mathcal{A})=\sum_{a \in \A_{0}} v(a)= \sum^{\infty}_{j=0} \sum_{a_0<\cdots<a_j} (-1)^j \chi_{a_{j-1},a_{j}}\cdots \chi_{a_0,a_{1}}.$$
\end{proof}

\begin{theorem}\label{classifying}
The Euler characteristic $\chi(\A)$ coincides with the Euler characteristic $\chi(B\A)$ of the classifying space $B\A$.
\end{theorem}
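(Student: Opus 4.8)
The plan is to exploit the homeomorphism $B\A\cong||\overline{N}\A||$ of Theorem \ref{realization} and to compute the Euler characteristic of the right-hand side through the skeletal filtration of the realization of the $\Delta$-space $\overline{N}\A$, finally matching the outcome with the coweighting formula of Lemma \ref{chi_T}. Throughout, $\chi$ on spaces denotes the topological Euler characteristic, i.e. the measure $\chi:\mathbf{cw}\to\Z$ of Example \ref{example}, so the real content is to check that $B\A\in\mathbf{cw}$ and that its Euler characteristic is the alternating sum of the $\chi(\overline{N}_n\A)$.

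\emph{Step 1.} I would first observe that each space $\overline{N}_n\A$ lies in $\mathbf{cw}$: it is a finite coproduct of finite products of the hom-spaces $\A(x,y)$, which are in $\mathbf{cw}$ because $\A$ is measurable, and $\mathbf{cw}$ is closed under finite coproducts and products. Moreover, writing $L$ for the length of the longest chain in the poset $(\A_{0},\leqq)$, one has $\overline{N}_n\A=\phi$ for $n>L$, since any summand all of whose factors $\A(x_i,x_{i-1})$ are nonempty forces a strict chain $x_n<\cdots<x_0$. Using multiplicativity of the topological Euler characteristic under products, $\chi(\phi)=0$, and $\chi_{a,b}=0$ unless $a\leqq b$, this gives
$$\chi(\overline{N}_n\A)=\sum_{x_0\neq x_1,\ \dots,\ x_{n-1}\neq x_n}\chi(\A(x_n,x_{n-1}))\cdots\chi(\A(x_1,x_0))=\sum_{a_0<\cdots<a_n}\chi_{a_{n-1},a_n}\cdots\chi_{a_0,a_1}.$$

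\emph{Step 2.} Next I would run an induction on the skeleta $||\overline{N}\A||^{(m)}$ using the pushout square preceding Corollary \ref{delta} (valid for $\Delta$-spaces), which exhibits $||X||^{(n)}$ as the pushout of $\Delta^n\times X_n\leftarrow\partial\Delta^n\times X_n\rightarrow||X||^{(n-1)}$ for $X=\overline{N}\A$. Since $\partial\Delta^n\hookrightarrow\Delta^n$ is a cofibration and the three spaces $\partial\Delta^n\times X_n$, $\Delta^n\times X_n$, $||X||^{(n-1)}$ have the homotopy type of finite CW-complexes (the last by the induction hypothesis, which also yields that $||X||^{(n)}$ is again homotopy finite CW, being a CW-pushout as in the proof of Proposition \ref{homotopy_type}), this is a homotopy pushout, so Mayer--Vietoris gives
$$\chi\bigl(||X||^{(n)}\bigr)=\chi\bigl(||X||^{(n-1)}\bigr)+\bigl(\chi(\Delta^n)-\chi(\partial\Delta^n)\bigr)\chi(X_n).$$
A direct computation gives $\chi(\Delta^n)-\chi(\partial\Delta^n)=1-\chi(S^{n-1})=(-1)^n$ for $n\geqq 1$, and $1-0=(-1)^0$ for $n=0$ since $\partial\Delta^0=\phi$. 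Iterating yields $\chi(||X||^{(m)})=\sum_{n=0}^{m}(-1)^n\chi(X_n)$, and since $||\overline{N}\A||=||\overline{N}\A||^{(L)}$ by Step 1, we obtain $\chi(B\A)=\sum_{n\geqq 0}(-1)^n\chi(\overline{N}_n\A)$. Substituting Step 1,
$$\chi(B\A)=\sum_{n\geqq 0}(-1)^n\sum_{a_0<\cdots<a_n}\chi_{a_{n-1},a_n}\cdots\chi_{a_0,a_1}=\chi(\A)$$
by Lemma \ref{chi_T}, which finishes the proof.

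The step I expect to require the most care is Step 2, namely checking that the topological Euler characteristic on $\mathbf{cw}$ is genuinely additive over these pushouts — equivalently, that each skeleton $||\overline{N}\A||^{(n)}$ again lies in $\mathbf{cw}$, so that all homology groups are finitely generated and Mayer--Vietoris applies; this is exactly what the inductive construction behind Corollary \ref{delta}(2) and Proposition \ref{homotopy_type} supplies. An alternative that sidesteps the additivity bookkeeping is to invoke Proposition \ref{homotopy_type} directly: after fixing finite CW-structures on the $\overline{N}_n\A$, the realization $||\overline{N}\A||$ is a finite CW-complex whose cells are $\tau^n\times\sigma$ for $\sigma\in P(\overline{N}_n\A)$, of dimension $n+\dim\sigma$, so that $\chi(B\A)=\sum_{n}(-1)^n\sum_{\sigma\in P(\overline{N}_n\A)}(-1)^{\dim\sigma}=\sum_{n}(-1)^n\chi(\overline{N}_n\A)$, after which one concludes as in Step 2.
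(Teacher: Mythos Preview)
Your proof is correct. The alternative you sketch at the end—invoking Proposition \ref{homotopy_type} to obtain an explicit CW structure on $||\overline{N}\A||$ with cells $\tau^n\times\sigma$ and then counting—is precisely the paper's argument: the paper further decomposes each cell $\sigma$ of $\overline{N}_j\A$ as a product $\sigma^{k_j}\times\cdots\times\sigma^{k_1}$ of cells of the hom-spaces and carries out the same rearrangement of the alternating sum that your Step~1 packages more compactly. Your main Step~2 is a mild but pleasant variant: by running Mayer--Vietoris on the skeletal pushouts you obtain $\chi(||X||)=\sum_n(-1)^n\chi(X_n)$ without ever fixing CW structures on the $X_n$, which makes the argument slightly more intrinsic and avoids the index bookkeeping; the paper's route, in exchange, yields an explicit CW decomposition of $B\A$.
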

\begin{proof}
By Theorem \ref{realization} and Proposition \ref{homotopy_type}, 
the classifying space $B\A$ has the homotopy type of a finite CW-complex 
whose $n$-cell corresponds to $\tau^{j} \times \sigma^{k_{j}} \times \dots \times  \sigma^{k_{1}}$
for $\tau^{j}=\mathrm{Int}(\Delta^{j})$ and a $k_{i}$-cell $\sigma^{k_{i}}$ of $\A(a_{i},a_{i+1})$ 
such that $n=j+\sum_{i=1}^{j} k_{i}$ and $a_{i} \not= a_{i+1}$ for any $i$.
Let $\A_{a,b}^{(k)}$ denote the number of $k$-cells of $\A(a,b)$.
The Euler characteristic $\chi(B\A)$ can be computed as the alternating sum of the numbers of cells as follows:
\begin{eqnarray*}
\chi(B\mathcal{A})&=&\sum_{n=0}^{\infty} (-1)^n \left(\text{the number of $n$-cells of $B\mathcal{A}$ }\right) \\
&=& \sum_{n=0}^{\infty}(-1)^n\sum_{j=0}^{\infty} \sum_{j+k_1+\cdots+k_j=n} \sum_{a_0<\cdots<a_j} \mathcal{A}_{a_{j-1},a_{j}}^{(k_j)} \cdots \mathcal{A}_{a_{0},a_{1}}^{(k_1)} \\
&=& \sum_{n=0}^{\infty} \sum_{j=0}^{\infty}  \sum_{j+k_1+\cdots+k_j=n} \sum_{a_0<\cdots<a_j} (-1)^j(-1)^{k_j} \mathcal{A}_{a_{j-1},a_{j}}^{(k_j)} \cdots (-1)^{k_1}\mathcal{A}_{a_{0},a_{1}}^{(k_1)} \\
&=& \sum_{j=0}^{\infty} \sum_{1\le \ell \le j} \sum_{k_{\ell}=0}^{\infty} \sum_{a_0<\cdots<a_j} (-1)^{j}(-1)^{k_j} \mathcal{A}_{a_{j-1},a_{j}}^{(k_j)} \cdots (-1)^{k_1}\mathcal{A}_{a_{0},a_{1}}^{(k_1)} \\
&=&\sum^{\infty}_{j=0} \sum_{a_0<\cdots<a_j} (-1)^j \chi_{a_{j-1},a_{j}}\cdots \chi_{a_0,a_{1}}.
\end{eqnarray*}
Consequently we have $\chi(B\A)=\chi(\A)$ by Lemma \ref{chi_T}.
\end{proof}

\section{The Euler characteristic of a cellular stratified space}

A cellular stratified space is a generalization of a cell complex introduced by Tamaki in \cite{Tama}.
It is a space of attached cells with possibly incomplete boundaries. 
He introduces many examples of cellular stratified spaces,
for instance, regular cell complexes, complements of complexified hyperplane arrangements, 
and configuration spaces of graphs and spheres \cite{Tama}, \cite{Tamb}, \cite{Tamc}.

\begin{definition}[Stratified space]\label{def1}
A {\em stratification} on a Hausdorff space $X$ indexed
by a poset $\Lambda$ is a map $\pi : X \to \Lambda$ satisfying the following properties:
\begin{enumerate}
\item Each $\pi^{-1}(\lambda)$ is connected, and open in $\overline{\pi^{-1}(\lambda)}$.
\item $\pi^{-1}(\lambda) \subset \overline{\pi^{-1}(\mu)}$ if and only if $\lambda \leqq \mu$.
\end{enumerate}
For simplicity, we denote $e_{\lambda} = \pi^{-1}(\lambda)$ and call it a \emph{face}.
The indexing poset $\Lambda$ is called the face poset of $X$ and denoted $P(X)$. 
A \emph{stratified space} $(X,\pi)$ is a pair of a space $X$ and its stratification $\pi$.
When $P(X)$ is a finite poset, we call the stratified space $X$ to be finite. 

We say a face $e_{\lambda}$ is \emph{normal} if $e_{\mu} \subset \overline{e_{\lambda}}$ whenever
$e_{\mu} \cap \overline{e_{\lambda}} \not= \phi$.
When all faces are normal, the stratification is said to be normal.

Let $(X, \pi_{X})$ and $(Y,\pi_{Y})$ be stratified spaces. 
A morphism of stratified spaces $(f, \overline{f})$ is a pair 
of a continuous map $f : X \longrightarrow Y$ and a
map of posets $\overline{f} : P(X) \longrightarrow P(Y)$ making the following diagram commutative
\[
\xymatrix{
X \ar[d]_{\pi_{X}} \ar[r]^{f} & Y \ar[d]^{\pi_{Y}} \\
P(X) \ar[r]_{\overline{f}} & P(Y). }
\]
When $f$ is a homeomorphism, 
the map $(f,\overline{f})$ is called a {\em subdivision}.
\end{definition}

\begin{definition}[Cell structure]\ 
\begin{enumerate}
\item An {\em $n$-globe} is a subspace $D$ of a disk $D^{n}$ with $\mathrm{Int}(D^{n}) \subset D \subset D^{n}$, where $\mathrm{Int}(D^{n})$ is the interior of $D^{n}$.
\item An $n$-cell structure on $e \subset X$ is a quotient map 
$\varphi : D \to \overline{e}$ from an $n$-globe $D$ whose restriction $\varphi |_{\mathrm{Int}(D^{n})} : \mathrm{Int}(D^{n}) \to e$ is a 
homeomorphism. We say that $n$ is the dimension of $e$, and denote it $\dim e$. We say that an $n$-cell is closed when $D$ is a closed disk $D^{n}$.
\item A cellular stratified space is a stratified space whose faces are equipped 
with cell structures such that 
$$\varphi(\partial D_{\lambda}) \subset \bigcup_{\dim e_{\mu}<\dim e_{\lambda}} e_{\mu},$$
where $\partial D_{\lambda}$ is the boundary of $D_{\lambda}$. When all cells are closed (called closed cellular stratified space), this means a cell complex.
Furthermore, a cellular stratified space is called a {\em CW-stratified space} if it satisfies
the closure finite and weak topology conditions (see Definition 2.19 in \cite{Tama}).
\end{enumerate}
\end{definition}

\begin{definition}[Cylindrical structure]\label{cri}
A \emph{cylindrical structure} on a normal cellular stratified $X$ consists of
\begin{itemize}
\item a normal stratification on $\partial D^{n}$ containing 
$\partial D_{\lambda}$ as a stratified subspace for each 
$n$-cell $\varphi_{\lambda} : D_{\lambda} \to \overline{e_{\lambda}}$.
\item a stratified space $P_{\lambda,\mu}$ and a morphism of stratified spaces
$$b_{\lambda, \mu} : P_{\lambda,\mu} \times D_{\lambda} \longrightarrow \partial D_{\mu} \subset D_{\mu}$$
for each $\lambda < \mu$.
\item a morphism of stratified spaces 
$$c_{\lambda_{1},\lambda_{2},\lambda_{3}} : P_{\lambda_{2},\lambda_{3}} \times P_{\lambda_{1},\lambda_{2}} \longrightarrow P_{\lambda_{1},\lambda_{3}}$$
for each sequence $\lambda_{1}<\lambda_{2}<\lambda_{3}$.
\end{itemize}
satisfying the following conditions:
\begin{enumerate}
\item The restriction of $b_{\lambda,\mu}$ to $P_{\lambda,\mu} \times \mathrm{Int}(D_{\lambda})$ is an embedding.
\item The following diagram is commutative
\[
\xymatrix{
P_{\lambda,\mu} \times D_{\lambda} \ar[d]_{b_{\lambda,\mu}} \ar[r]^{\ \ \ \mathrm{pr}_{2}} & D_{\lambda} \ar[d]^{\varphi_{\lambda}} \\
D_{\mu} \ar[r]_{\varphi_{\mu}} & X. }
\]
\item The following diagram is commutative
\[
\xymatrix{
P_{\lambda_{2},\lambda_{3}} \times P_{\lambda_{1},\lambda_{2}} \times D_{\lambda_{1}} 
\ar[r]^{\ \ \ 1 \times b_{\lambda_{1},\lambda_{2}}} \ar[d]_{c_{\lambda_{1},\lambda_{2},\lambda_{3}} \times 1 } 
& P_{\lambda_{2},\lambda_{3}} \times D_{\lambda_{2}} \ar[d]^{b_{\lambda_{2},\lambda_{3}}} \\
P_{\lambda_{1},\lambda_{3}} \times D_{\lambda_{1}} \ar[r]_{\ \ b_{\lambda_{1},\lambda_{3}}} & D_{\lambda_{3}}. }
\]
\item The map $c$ satisfies the associativity condition, i.e.,
\[
\xymatrix{
P_{\lambda_{3},\lambda_{4}} \times P_{\lambda_{2},\lambda_{3}} \times P_{\lambda_{1},\lambda_{2}} 
\ar[r]^{\ \ \ 1 \times c_{\lambda_{1},\lambda_{2},\lambda_{3}}} \ar[d]_{c_{\lambda_{2},\lambda_{3},\lambda_{4}} \times 1} 
& P_{\lambda_{3},\lambda_{4}} \times P_{\lambda_{1},\lambda_{3}} \ar[d]^{c_{\lambda_{1},\lambda_{3},\lambda_{4}}} \\
P_{\lambda_{2},\lambda_{4}} \times P_{\lambda_{1},\lambda_{2}} \ar[r]_{\ \ c_{\lambda_{1},\lambda_{2},\lambda_{4}}} & P_{\lambda_{1},\lambda_{4}} 
}
\]
is a commutative diagram.
\item We have 
$$\partial D_{\mu} = \bigcup_{\lambda< \mu} b_{\lambda,\mu}(P_{\lambda,\mu} \times \mathrm{Int}(D_{\lambda}))$$
as a stratified space.
\end{enumerate}
A normal cellular stratified space
equipped with a cylindrical structure is called a \emph{cylindrically normal} cellular stratified space.
\end{definition}

\begin{definition}
For a cylindrically normal cellular stratified space $X$,
define a topological category $C(X)$ as follows.
The set of objects $C(X)_{0}$ is the set $P(X)$ of faces. The space of morphisms is defined by
$$C(X)(\lambda,\mu) = P_{\lambda,\mu}$$
for each $\lambda < \mu$ in $P(X)$ with the composition $c$.
Note that $C(X)$ is an acyclic topological category.
\end{definition}

Tamaki constructs an embedding $BC(X) \hookrightarrow X$ from the classifying space of $C(X)$ to the original 
cylindrically normal cellular stratified space $X$, 
and tries to show that it embeds $BC(X)$ as a deformation retract of $X$.
Note that he considers the more general ``stellar" stratified spaces rather than cellular stratified spaces in his paper \cite{Tama}.

\begin{theorem}[Theorem 4.15 in \cite{Tama}]
There exists an embedding $BC(X) \hookrightarrow X$ for a cylindrically normal cellular stratified space $X$.
Furthermore, when all cells are closed, the above embedding is a homeomorphism. 
\end{theorem}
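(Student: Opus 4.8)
This is Tamaki's theorem; the plan is to build the embedding by hand, using the cylindrical structure of $X$ as a system of coordinates that refines the ordinary cell structure. The model case is the classical homeomorphism between a regular CW complex and the geometric realisation of its face poset (its barycentric subdivision); in the cylindrical setting the ``edges between barycentres'' are fattened by the parameter spaces $P_{\lambda,\mu}$. Since $C(X)$ is an acyclic topological category, I would first invoke Theorem \ref{realization} to replace $BC(X)=|NC(X)|$ by the fat realisation $||\overline{N}C(X)||$ of the non-degenerate nerve (passing to finite sub-stratified-spaces if $X$ is infinite); this kills the degeneracies and leaves only face maps to control. A point of $||\overline{N}C(X)||$ is represented by a chain $\lambda_0<\lambda_1<\cdots<\lambda_n$ in $P(X)$, points $p_i\in P_{\lambda_{i-1},\lambda_i}=C(X)(\lambda_{i-1},\lambda_i)$, and barycentric coordinates $t=(t_0,\dots,t_n)\in\Delta^{n}$.

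I would send such a point into $\overline{e_{\lambda_n}}\subset X$ by telescoping the structure maps. Starting from the centre $0\in\mathrm{Int}(D_{\lambda_0})$, apply $b_{\lambda_0,\lambda_1}(p_1,-)$ to reach $\partial D_{\lambda_1}$, scale radially inside $D_{\lambda_1}$ by a factor read off from $t$, apply $b_{\lambda_1,\lambda_2}(p_2,-)$, scale again, and so on; push the point of $D_{\lambda_n}$ thus produced into $X$ via the characteristic map $\varphi_{\lambda_n}$. Radial scaling by $s\le 1$ lands in $\mathrm{Int}(D^{n})\subset D$, so it is defined on every globe, and the reparametrisation of the $t_i$ is chosen so that $t_i=0$ makes the formula agree with the one for the chain obtained by deleting $\lambda_i$, while $t_i=1$ (for $i<n$) produces a point that $\varphi_{\lambda_n}$ carries into $e_{\lambda_i}$.

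The map is continuous on each piece $\Delta^{n}\times\overline{N}_nC(X)$, so what must be checked is compatibility with the coface identifications of the fat realisation. Deleting an interior vertex $\lambda_j$ corresponds on the nerve to composing the adjacent morphisms $p_{j+1},p_j$ by $c$, and deleting an end vertex to forgetting a morphism and its object; that the telescoped formula respects both is precisely the content of the commuting squares in Definition \ref{cri}: the square in condition (2) (with $\mathrm{pr}_2$ and the $\varphi$'s) handles the end faces, the square in condition (3) (with $b$ and $c$) handles the interior faces, and the associativity of $c$, condition (4), makes iterated deletions consistent. Hence the construction descends to a continuous map $f\colon BC(X)\to X$.

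Finally I would prove injectivity and upgrade to an embedding. For a point represented by $(t,\sigma)$ with $m=\max\{\,i:t_i\neq 0\,\}$, condition (1) (that $b_{\lambda,\mu}$ is an embedding on $P_{\lambda,\mu}\times\mathrm{Int}(D_\lambda)$) together with the globe inclusion $\mathrm{Int}(D^{n})\subset D\subset D^{n}$ forces $f(t,\sigma)$ into the \emph{open} stratum $e_{\lambda_m}$; since the strata of $X$ are disjoint this recovers $\lambda_m$ from the image point, and then peeling off the maps $b_{\lambda_{i-1},\lambda_i}$ one at a time --- each injective on the relevant interior by (1) and by the definition of a cell structure --- recovers the whole chain and all the $p_i$, so $f$ is injective. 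When $X$ is finite, $BC(X)$ is compact and $X$ is Hausdorff, so $f$ is a homeomorphism onto its image; in general one checks that $f$ is closed onto its image skeleton by skeleton, using compactness of cells. If moreover every cell is closed then $D_\lambda=D^{n_\lambda}$ and $f$ is surjective: a point of $\overline{e_\mu}$ other than the barycentre $\varphi_\mu(0)$ lies over a point of $\partial D_\mu=\bigcup_{\lambda<\mu}b_{\lambda,\mu}\bigl(P_{\lambda,\mu}\times\mathrm{Int}(D_\lambda)\bigr)$ by condition (5), and radial descent plus induction on dimension produces a preimage, so $f$ is a continuous bijection and hence a homeomorphism. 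I expect injectivity to be the real obstacle: one must determine exactly which stratum an arbitrary point of $BC(X)$ is sent to and then invert the telescoped composite of the $b_{\lambda,\mu}$, which is where all five axioms of the cylindrical structure are genuinely used.
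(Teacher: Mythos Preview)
The paper does not give its own proof of this statement: it is quoted verbatim as Theorem 4.15 of \cite{Tama} and used as a black box, so there is no argument here to compare yours against. Your outline is a reasonable reconstruction of how such a proof must go --- build the map simplex by simplex via iterated $b_{\lambda,\mu}$'s and radial rescalings, check the face relations against axioms (2)--(4) of Definition \ref{cri}, and use axiom (1) together with the stratification to peel back and establish injectivity --- and this is indeed the shape of Tamaki's construction. Two small cautions: your appeal to Theorem \ref{realization} to replace $|NC(X)|$ by $||\overline{N}C(X)||$ is only stated in this paper for \emph{finite} acyclic topological categories, so the reduction to finite substratified spaces you parenthetically mention would need to be made precise; and in the closed case your surjectivity argument by ``radial descent plus induction on dimension'' is the right idea but needs the covering in axiom (5) to be organised carefully so that every point of $D_\mu$, not just those on $\partial D_\mu$, acquires barycentric-type coordinates. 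None of this can be checked against the present paper, which simply cites the result.
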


In order to show that the above embedding is a homotopy equivalence for a general cylindrical cellular stratified spaces, 
Tamaki considers the following condition. See section 3.3 in \cite{Tama} for details.

\begin{definition}[Local polyhedral structure]
A {\em locally polyhedral cellular stratified space} consists of 
\begin{itemize}
\item a cylindrically normal CW-stratified space $X$,
\item a family of Euclidean polyhedral complexes $\tilde{F}_{\lambda}$ indexed by $\lambda$ of $P(X)$
\item a family of homeomorphisms $\alpha_{\lambda} : \tilde{F}_{\lambda} \to D^{\dim e_{\lambda}}$ for any $\lambda$ of $P(X)$
\end{itemize}
satisfying the following conditions:
\begin{enumerate}
\item The homeomorphism $\alpha : \tilde{F}_{\lambda} \to D^{\dim e_{\lambda}}$ is a subdivision of the stratified space, where the stratification 
of $D^{\dim e_{\lambda}}$ is defined by the cylindrical structure.
\item The parameter space $P_{\lambda,\mu}$ is a locally cone-like space and the composition 
$$P_{\lambda,\mu} \times F_{\lambda} \stackrel{1 \times \alpha_{\lambda}}{\longrightarrow} P_{\lambda,\mu} \times D_{\lambda} 
\stackrel{b_{\lambda,\mu}}{\longrightarrow} D_{\mu} \stackrel{\alpha^{-1}_{\mu}}{\longrightarrow} F_{\mu}$$
is a PL map, where $F_{\lambda}=\alpha^{-1}(D_{\lambda})$.
\end{enumerate}
Each $\alpha_{\lambda}$ is called a {\em polyhedral replacement} of the cell structure $e_{\lambda}$.
\end{definition}

\begin{theorem}[Theorem 4.16 in \cite{Tama}]
If $X$ is a locally polyhedral cellular stratified space, 
then there exists an embedding $BC(X) \hookrightarrow X$ whose image is a deformation retract of $X$. 
\end{theorem}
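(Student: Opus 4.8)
The plan is to upgrade the embedding $\iota\colon BC(X)\hookrightarrow X$ of the preceding theorem to a deformation retraction, building the retracting homotopy inductively over the skeletal filtration of $X$ and using the locally polyhedral structure to make the retraction explicit and PL on each cell. Write $X^{(n)}$ for the union of the closed cells of dimension at most $n$; this is a CW-stratified subspace, $X=\bigcup_n X^{(n)}$ carries the weak topology, and the image of $\iota$ meets $\overline{e_\mu}$ only in faces $e_\lambda$ with $\lambda\le\mu$, so $\iota(BC(X))$ inherits a compatible filtration by $\iota(BC(X))\cap X^{(n)}$. The inductive hypothesis is that there is a strong deformation retraction of $X^{(n-1)}$ onto $\iota(BC(X))\cap X^{(n-1)}$.

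For the inductive step I would handle each $n$-cell $e_\mu$ separately via its polyhedral replacement $\alpha_\mu\colon \tilde F_\mu\to D^{\dim e_\mu}$ (a homeomorphism). Pulling back along $\varphi_\mu\circ\alpha_\mu$, conditions (1) and (2) in the definition of a locally polyhedral cellular stratified space turn the globe $D_\mu\subset D^{\dim e_\mu}$, the cell structure, and all the attaching data $b_{\lambda,\mu}$ into PL subpolyhedra and PL maps inside the Euclidean polyhedral complex $\tilde F_\mu$. Inside $\tilde F_\mu$ the part $K_\mu$ corresponding to $\iota(BC(X))\cap\overline{e_\mu}$ — the union of the (PL) images of the cylindrical simplices attached to chains $\lambda_0<\cdots<\lambda_k=\mu$, together with the part lying over $\partial D_\mu$, which is already in the lower skeleton — is a PL subpolyhedron. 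The geometric claim to verify is that the complement $\tilde F_\mu\setminus K_\mu$ is a PL open collar on the part of $\partial D^{\dim e_\mu}$ not contained in $D_\mu$ (the ``missing'' boundary), by condition (5) $\partial D_\mu=\bigcup_{\lambda<\mu}b_{\lambda,\mu}(P_{\lambda,\mu}\times\mathrm{Int}(D_\lambda))$ together with the description of $\iota$. Granting this, PL regular-neighborhood/collapsing theory yields a PL strong deformation retraction of $\tilde F_\mu$ onto $K_\mu$ that restricts to the already-constructed retraction on the part over $\partial D_\mu$.

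Transporting these cell-wise retractions to $X$ via $\varphi_\mu\circ\alpha_\mu$ and gluing them gives the inductive step: the commuting diagrams for $b_{\lambda,\mu}$ (condition (2) of the cylindrical structure) and for $c_{\lambda_1,\lambda_2,\lambda_3}$ (conditions (3) and (4)) guarantee that the retractions on different cells agree on the overlaps $\overline{e_\lambda}\cap\overline{e_\mu}$, so they assemble into a strong deformation retraction of $X^{(n)}$ onto $\iota(BC(X))\cap X^{(n)}$ that is stationary on $\iota(BC(X))$. Passing to the union over $n$ is harmless because $X$ carries the weak topology (the CW condition built into a CW-stratified space), and one concatenates the homotopies over $[1-2^{-n},1-2^{-n-1}]$ in the standard way; this produces the desired deformation retraction of $X$ onto $\iota(BC(X))$.

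The main obstacle I expect is exactly the gluing step, and inside it the need to choose the PL retraction on each cell canonically enough to be compatible on the lower-dimensional faces where many cells meet. This is precisely what the ``locally polyhedral'' hypothesis is there to supply: without a genuine PL structure the collar $\tilde F_\mu\setminus K_\mu$ need not exist or need not be PL, and the plain cylindrically normal cellular hypothesis only delivers the embedding (Theorem~4.15), not the retraction. I would organize the local choices so as to force compatibility — for instance by fixing, on each $\tilde F_\mu$, a PL triangulation refining all of $D_\mu$, the $b_{\lambda,\mu}$-images, and $K_\mu$, and taking the retraction onto $K_\mu$ determined by a mapping-cylinder (second-derived) neighborhood — and then checking, via the coherence diagrams above, that restricting such a neighborhood of $K_\mu$ to a face recovers the corresponding neighborhood one level down. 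Verifying this coherence is the technical heart of the argument.
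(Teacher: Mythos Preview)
The paper does not contain a proof of this statement: it is quoted verbatim as Theorem~4.16 of \cite{Tama} and used as a black box (to derive Corollary~\ref{homotopy_equ} and then Theorem~\ref{facecat}). So there is no ``paper's own proof'' to compare your attempt against; any comparison would have to be with Tamaki's original argument in \cite{Tama}, not with anything in the present paper.

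That said, your outline is broadly in the right spirit for how such results are proved: one works cell by cell through the polyhedral replacements, uses PL collar/regular-neighborhood machinery to retract each $F_\mu=\alpha_\mu^{-1}(D_\mu)$ onto the subpolyhedron corresponding to $BC(X)\cap\overline{e_\mu}$, and then glues. You have also correctly identified the genuine technical crux, namely making the cell-wise PL retractions coherent across faces so that they descend through the characteristic maps $\varphi_\mu$. One point to be careful about: the retraction on $\partial D_\mu$ is not literally ``the already-constructed retraction on $X^{(n-1)}$'' pulled back, since $\varphi_\mu$ is only a quotient map onto $\overline{e_\mu}$ and need not be injective on $\partial D_\mu$; the compatibility has to be arranged upstairs in the polyhedral models via the maps $b_{\lambda,\mu}$ and the coherence diagrams, exactly as you suggest, rather than by simply restricting the ambient homotopy on $X^{(n-1)}$. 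If you intend to flesh this out, that is the step that needs the most care.
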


\begin{corollary}\label{homotopy_equ}
If $X$ is a locally polyhedral cellular stratified space, 
then the classifying space $BC(X)$ of the cylindrical face category $C(X)$ of $X$ is homotopy equivalent to $X$.
\end{corollary}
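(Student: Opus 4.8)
The plan is to assemble \texttt{Corollary \ref{homotopy_equ}} directly from the two theorems that precede it, since it is essentially the homotopical shadow of \texttt{Theorem 4.16 in \cite{Tama}}. First I would invoke the cited theorem: for a locally polyhedral cellular stratified space $X$, there exists an embedding $BC(X) \hookrightarrow X$ whose image is a deformation retract of $X$. A deformation retract inclusion is in particular a homotopy equivalence, so $X$ is homotopy equivalent to the image of $BC(X)$ under this embedding, and since an embedding onto its image is a homeomorphism onto that image, $BC(X)$ is homotopy equivalent to $X$. That is the whole argument; the corollary is a weakening of the deformation-retract statement to a mere homotopy equivalence.

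In slightly more detail, I would recall that a deformation retraction of $X$ onto a subspace $A \subset X$ provides a map $r : X \to A$ with $r|_A = \mathrm{id}_A$ together with a homotopy $H : X \times [0,1] \to X$ from $\mathrm{id}_X$ to $\iota \circ r$ (where $\iota : A \hookrightarrow X$ is the inclusion). Then $r \circ \iota = \mathrm{id}_A$ and $\iota \circ r \simeq \mathrm{id}_X$ via $H$, so $\iota$ is a homotopy equivalence with homotopy inverse $r$. Applying this with $A$ the image of the embedding $BC(X) \hookrightarrow X$ from \texttt{Theorem 4.16 in \cite{Tama}}, and composing with the homeomorphism $BC(X) \xrightarrow{\ \cong\ } A$, we conclude $BC(X) \simeq X$.

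There is no real obstacle here: the content all lives in \cite{Tama}, and the corollary is a formal consequence. The only thing to be careful about is that the hypothesis ``locally polyhedral cellular stratified space'' is exactly the hypothesis of the quoted \texttt{Theorem 4.16 in \cite{Tama}}, so no extra conditions need to be checked; and that ``embedding'' is being used in the topological sense of a homeomorphism onto the image, so that the homotopy equivalence transports from the image to $BC(X)$ itself. Accordingly the proof is a one-line deduction, and I would simply write: by \texttt{Theorem 4.16 in \cite{Tama}}, the embedding $BC(X) \hookrightarrow X$ realizes $BC(X)$ as a deformation retract of $X$; since a deformation retract inclusion is a homotopy equivalence, $BC(X)$ is homotopy equivalent to $X$. $\qed$
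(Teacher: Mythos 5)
Your proposal is correct and follows exactly the route the paper intends: the corollary is stated without proof as an immediate consequence of Theorem 4.16 in \cite{Tama}, and your spelling out that a deformation retract inclusion is a homotopy equivalence (transported along the homeomorphism of $BC(X)$ onto its image) is precisely the implicit argument. No gaps.
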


\begin{theorem}\label{facecat}
Let $X$ be a finite locally polyhedral cellular stratified space. 
If each parameter space $P_{\lambda,\mu}$ belongs to $\mathbf{cw}$ for $\lambda < \mu$ in $P(X)$, then the Euler characteristic $\chi(X)$ of $X$ is equal to 
$\chi(C(X))$ of the cylindrical face category.
\end{theorem}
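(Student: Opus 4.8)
The plan is to chain together the results already established for acyclic topological categories with the identification of $X$ and $BC(X)$ up to homotopy. First I would observe that $C(X)$ is by construction an acyclic topological category (this is noted right after the definition of $C(X)$), and that the hypothesis that each parameter space $P_{\lambda,\mu}$ lies in $\mathbf{cw}$ says precisely that $C(X)$ is a finite measurable acyclic topological category with respect to the measure $\chi : \mathbf{cw} \to \Z$; indeed $C(X)(\lambda,\mu) = P_{\lambda,\mu} \in \mathbf{cw}$ for $\lambda < \mu$, while $C(X)(\lambda,\lambda) = *$ and $C(X)(\lambda,\mu) = \phi$ otherwise, and $\phi$ as well as $*$ both lie in $\mathbf{cw}$. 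Since $P(X)$ is finite (as $X$ is a finite cellular stratified space), $C(X)$ has finitely many objects, so all the hypotheses of the results in Section~4.2 are met.

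Next I would invoke Lemma~\ref{chi_T}, which guarantees that a finite measurable acyclic topological category admits an Euler characteristic (it explicitly constructs a coweighting), so $\chi(C(X))$ is well-defined. Then Theorem~\ref{classifying} gives $\chi(C(X)) = \chi(BC(X))$, identifying the Euler characteristic of the enriched category with the topological Euler characteristic of its classifying space. The remaining point is to connect $\chi(BC(X))$ with $\chi(X)$: by Corollary~\ref{homotopy_equ}, since $X$ is a finite locally polyhedral cellular stratified space, the classifying space $BC(X)$ is homotopy equivalent to $X$. The topological Euler characteristic on $\mathbf{cw}$ is a homotopy invariant (it is defined precisely so as to be invariant under weak equivalence, which for spaces of the homotopy type of finite CW complexes means homotopy equivalence), so $\chi(BC(X)) = \chi(X)$. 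Concatenating, $\chi(X) = \chi(BC(X)) = \chi(C(X))$, which is the claim.

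The only genuine subtlety, and the step I would treat most carefully, is checking that $BC(X)$ actually lies in $\mathbf{cw}$ so that the topological Euler characteristic of the two sides of Corollary~\ref{homotopy_equ} can legitimately be compared: one needs $X$ (equivalently $BC(X)$) to have the homotopy type of a finite CW complex. This follows from Theorem~\ref{realization} together with Proposition~\ref{homotopy_type}: $BC(X)$ is homeomorphic to $\|\overline{N}C(X)\|$, the realization of a $\Delta$-space that is degreewise of the homotopy type of a CW complex (each $\overline{N}_n C(X)$ being a finite coproduct of finite products of the $P_{\lambda,\mu} \in \mathbf{cw}$), and Proposition~\ref{homotopy_type} exhibits an explicit finite CW structure up to homotopy — finite because $P(X)$ is finite and the acyclicity forces the chains $a_0 < \cdots < a_j$ to have bounded length. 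Hence $X \simeq BC(X)$ is in $\mathbf{cw}$, and applying the measure $\chi$ to the homotopy equivalence of Corollary~\ref{homotopy_equ} is justified. With that verification in hand the proof is just the three-step chain $\chi(X) = \chi(BC(X)) = \chi(C(X))$ via Corollary~\ref{homotopy_equ} and Theorem~\ref{classifying}.
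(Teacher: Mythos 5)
Your proposal is correct and follows essentially the same route as the paper, which simply chains Theorem \ref{classifying} with Corollary \ref{homotopy_equ} to get $\chi(X)=\chi(BC(X))=\chi(C(X))$. The extra hypothesis-checking you carry out (that $C(X)$ is a finite measurable acyclic topological category and that $BC(X)\simeq X$ has the homotopy type of a finite CW complex) is a welcome elaboration of details the paper leaves implicit, not a different argument.
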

\begin{proof}
It follows from Theorem \ref{classifying} and Corollary \ref{homotopy_equ} that 
$$\chi(X) = \chi(BC(X)) = \chi(C(X)).$$
\end{proof}

\begin{example}
Example 4.7 in \cite{Tama} shows that a complex projective space with the minimal decomposition 
$\mathbb{C}\mathbf{P}^{n}=e^{0} \cup e^{2} \cup \cdots \cup e^{2n}$ is a closed cylindrical cellular stratified space 
whose face category $C(\mathbb{C}\mathbf{P}^{n})$ forms 
\[
\xymatrix{
{\bullet} \ar[r]^{S^{1}} & {\bullet} \ar[r]^{S^{1}} & \bullet \ar[r]^{S^{1}} &  \cdots \ar[r]^{S^{1}} & \bullet. 
}
\]
Since $\chi(S^{1})=0$, the similarity matrix $\xi$ of $C(\mathbb{C}\mathbf{P}^{n})$ is the unit matrix with dimension $n+1$.
We can take a weighting $w$ as $w(e^{2i})=1$ for all $0 \leqq i \leqq n$. 
Hence, $\chi(\mathbb{C}\mathbf{P}^{n}) = \chi \left(C \left(\mathbb{C}\mathbf{P}^{n} \right)\right) =n+1$. 
\end{example}

However, for obtaining $\chi(\mathbb{C}\mathbf{P}^{n})$, it is easier to calculate the alternating sum of the numbers of cells than the above.
The next example is non-closed cellular stratified space.
In the following case, 
it is difficult to calculate the Euler characteristic from the numbers of cells.

\begin{example}
Let us consider the space $X=S^{n} \times S^{m}-\{*\}$ with one point removed from the product of spheres.
This is a cellular stratified space consisting of the cell structure
$$\varphi_{j} : D_{j}=\mathrm{Int}(D^{j}) \longrightarrow S^{j}-\{*\},$$
which is the restriction of the canonical projection $D^{j} \to S^{j}$ collapsing $\partial D^{j}$ to a single point
for $j=n,m$, and
$$\varphi_{n+m}=\varphi_{n} \times \varphi_{m} : D_{n+m}=(D^{n} \times D^{m})-(\partial D^{n} \times \partial D^{m}) \longrightarrow S^{n} \times S^{m}-\{*\}$$
where we regard $D^{n} \times D^{m}$ as $D^{n+m}$.
The boundary of $D_{n+m}$ is
\begin{equation}
\begin{split}
\partial D_{n+m} &=\partial(D^{n} \times D^{m})-(\partial D^{n} \times \partial D^{m}) \notag \\
&= (\partial D^{n} \times D^{m}) \cup (D^{n} \times \partial D^{m}) - (\partial D^{n} \times \partial D^{m}) \\
&= \left( \partial D^{n} \times \mathrm{Int}(D^{m}) \right) \coprod \left( \mathrm{Int}(D^{n}) \times \partial D^{m} \right)
\end{split}
\end{equation}
A normal stratification on $\partial D^{n+m}$ is induced by the canonical cell decomposition on $\partial I^{n+m}$
and $\partial I^{n+m} \cong \partial D^{n+m}$.
The cylindrical structure is given by the inclusions
$$b_{ij} : D_{i} \times S^{j-1}=\mathrm{Int}(D^{i}) \times S^{j-1} \hookrightarrow \partial D_{n+m}$$
for $(i,j)=(n,m), (m,n)$. Then the cylindrical face category $C(X)$ forms 
\[
\xymatrix{
{\bullet} \ar[r]^{S^{n-1}} & {\bullet}  & \bullet \ar[l]_{S^{m-1}}. 
}
\]
A family of polyhedral replacements $\{\alpha_{j} : I^{j} \cong D^{j}\}_{j=n,m,m+n}$ makes $X$ a locally polyhedral cellular stratified space.
The similarity matrix of $C(X)$
\begin{equation}
\xi=
\begin{pmatrix}
1 & 0 & \chi(S^{n-1}) \\
0 & 1 & \chi(S^{m-1}) \\
0 & 0 & 1
\end{pmatrix} \notag
\end{equation}
has an inverse matrix
\begin{equation}
\xi^{-1}=
\begin{pmatrix}
1 & 0 & 0 \\
0 & 1 & 0 \\
-\chi(S^{n-1}) & -\chi(S^{m-1}) & 1
\end{pmatrix} \notag
.
\end{equation}
Theorem \ref{facecat} shows that the Euler characteristic of $X$ is 
$$\chi(X)=\chi(C(X)) = 3-\chi(S^{n-1})-\chi(S^{m-1})=\begin{cases}
-1 &\ \ \textrm{if both $n$ and $m$ are odd,}\\
3 &\ \ \textrm{if both $n$ and $m$ are even,}\\
1 &\ \ \textrm{otherwise.}\\
\end{cases}
$$
Indeed, the space $X=S^{n} \times S^{m}-\{*\}$ is homotopy equivalent to $BC(X) = S^{n-1} \vee S^{m-1}$.
The Euler characteristic $\chi(X)$ can also be obtained by calculating $H_{*}(S^{n-1} \vee S^{m-1})$.
\end{example}

\appendix

\section{Homotopy pullbacks}

Let us recall some fundamental properties of a model category $\M$.

\begin{definition}
A model category $\M$ is called {\em right proper}
if the pullback of a weak equivalence along a fibration is a weak equivalence.
\end{definition}

\begin{proposition}[Proposition 13.1.2 in \cite{Hir03}]
Suppose that $f : A \to B$ is a weak equivalence and $p : E \to B$ is a fibration in a model category $\M$.
If both $A$ and $B$ are fibrant, the pull back $A \times_{B} E \to E$ of $f$ along $p$ is a weak equivalence.
\end{proposition}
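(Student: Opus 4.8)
The plan is to reduce, by factoring $f$, to the case of a trivial cofibration, and then to construct an explicit homotopy inverse with the help of the homotopy lifting property of $p$. First I would factor $f$ as $A \xrightarrow{j} C \xrightarrow{q} B$ with $j$ a trivial cofibration and $q$ a fibration; since $f$ and $j$ are weak equivalences, so is $q$ by two-out-of-three, hence $q$ is a trivial fibration. Pulling back along $p$, the projection $C \times_B E \to E$ is the base change of the trivial fibration $q$, hence again a trivial fibration and in particular a weak equivalence, while $\pi \colon C \times_B E \to C$ is the base change of the fibration $p$, hence a fibration; since $B$ is fibrant so is $C$, and therefore so is $C \times_B E$. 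By the pasting law for pullbacks $A \times_B E \cong A \times_C (C \times_B E)$, so the map $A \times_B E \to E$ factors as $A \times_B E \to C \times_B E \to E$, and by two-out-of-three it suffices to show that the first map is a weak equivalence. That map is the base change of the trivial cofibration $j \colon A \to C$ along the fibration $\pi$, a situation in which both $A$ and $C \times_B E$ are fibrant.

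It thus remains to prove the following: if $j \colon A \to C$ is a trivial cofibration between fibrant objects and $\pi \colon Z \to C$ is a fibration with $Z$ fibrant, then the base change $\bar\jmath \colon A \times_C Z \to Z$ is a weak equivalence. Here I would argue by hand. Since $j$ is a trivial cofibration and $A$ is fibrant, $j$ admits a retraction $r \colon C \to A$ with $rj = \mathrm{id}_A$. Choosing a path object $C^I$ for $C$ and lifting $j$ against the path-object fibration $C^I \to C \times C$ (with base map $(jr,\mathrm{id}_C)$ and top map the constant homotopy at $j$) produces a right homotopy $H$ from $jr$ to $\mathrm{id}_C$ which is constant along $j$, i.e.\ $H \circ j$ is the constant homotopy at $j$. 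Applying the homotopy lifting property of the fibration $\pi$ to the homotopy $H \circ \pi$, with initial lift $\mathrm{id}_Z$ of its endpoint $\pi$, produces a map $\phi \colon Z \to Z$ with $\pi \circ \phi = j \circ r \circ \pi$ together with a right homotopy $\widetilde H$ from $\phi$ to $\mathrm{id}_Z$ covering $H \circ \pi$. Since $j \circ (r\pi) = \pi \circ \phi$, the pair $(r\pi,\phi)$ factors through the pullback and defines $\bar r \colon Z \to A \times_C Z$. One checks that $\bar r$ is a two-sided homotopy inverse of $\bar\jmath$: indeed $\bar\jmath \circ \bar r = \phi$ is homotopic to $\mathrm{id}_Z$ via $\widetilde H$, while $\bar r \circ \bar\jmath$ carries $(a,z)$ to $(a,\phi(z))$ (note $r\pi(z) = rj(a) = a$), which is homotopic to $(a,z)$ via the homotopy obtained from $\widetilde H$ by holding the $A$-coordinate fixed; this lands in $A \times_C Z$ precisely because $H$ is constant along $j$. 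Hence $\bar\jmath$ is a homotopy equivalence, so it becomes an isomorphism in $\Ho(\M)$, and is therefore a weak equivalence.

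The genuine work lies in the homotopy lifting step: one must produce a relative path object for $\pi$ whose matching map to $Z \times_C C^I$ is a \emph{trivial} fibration --- which it is, because $d_1 \colon C^I \to C$ is a trivial fibration and base change preserves trivial fibrations --- so that $H \circ \pi$ lifts with no cofibrancy hypothesis on $Z$; one must also arrange that $H$ is constant along $j$ so that the lifted homotopy descends into the pullback, and keep the two endpoints straight. The remaining ingredients --- the factorization, stability of (trivial) fibrations under base change, the pasting law, and the passage from a homotopy equivalence to a weak equivalence via $\Ho(\M)$ --- are routine. As an aside, one might hope to bypass the hands-on argument by noting that $p^{*} \colon \M/B \to \M/E$ is a right Quillen functor and invoking Ken Brown's lemma; but that only disposes of the trivial-fibration half for free, since $(A,f)$ need not be fibrant in $\M/B$, and replacing $f$ by a fibration leads straight back to the sublemma above.
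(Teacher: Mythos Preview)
The paper does not supply its own proof of this proposition; it is simply cited as Proposition~13.1.2 of Hirschhorn. So there is no in-paper argument to compare against. Your sketch is essentially Reedy's argument, which is what Hirschhorn records: factor the weak equivalence as a trivial cofibration followed by a trivial fibration, dispose of the trivial-fibration half by stability of trivial fibrations under base change, and then handle the base change of a trivial cofibration $j:A\to C$ with $A$ fibrant by producing a retraction $r$ and using the covering homotopy property of the fibration $\pi$ to exhibit an explicit homotopy inverse to $\bar\jmath$. Your identification of the delicate point --- that one needs a relative path object for $\pi$ whose matching map to $Z\times_C C^I$ is a \emph{trivial} fibration, so that lifting needs no cofibrancy hypothesis on $Z$ --- is exactly right, and your two-out-of-three justification via the trivial fibration $d_1\colon C^I\to C$ is the correct mechanism. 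Likewise, arranging $H$ to be constant along $j$ is precisely what makes the second homotopy $\bar r\,\bar\jmath\simeq\mathrm{id}$ land in the pullback $A\times_C Z$.

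Two small points are worth tightening if you write this out in full. First, the relative path object should be built so that it carries \emph{two} endpoint maps to $Z$, not just one to $Z$ and one to $C$: factor the diagonal $Z\to Z\times_C C^I\times_C Z$ as a weak equivalence followed by a fibration; the resulting object is then a genuine path object for $Z$ equipped with a compatible projection to $C^I$, and its map to $Z\times_C C^I$ is a trivial fibration for the reason you give. Second, the step ``homotopy equivalence $\Rightarrow$ weak equivalence'' is legitimate here because both $Z$ and $A\times_C Z$ are fibrant: right-homotopic maps into a fibrant target become equal in $\mathrm{Ho}(\mathcal M)$ after precomposing with a cofibrant replacement of the source, so $[\bar\jmath]$ and $[\bar r]$ are mutually inverse in $\mathrm{Ho}(\mathcal M)$. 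With those details in place your argument is complete and matches the standard proof.
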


\begin{corollary}\label{right}
If every object of a model category $\M$ is fibrant, then $\M$ is right proper.
\end{corollary}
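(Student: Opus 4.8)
The plan is simply to unwind the definition of right properness and invoke the preceding Proposition 13.1.2. I would take an arbitrary weak equivalence $f : A \to B$ and an arbitrary fibration $p : E \to B$ in $\M$, and must show that the base change $A \times_{B} E \to E$ of $f$ along $p$ is again a weak equivalence. By hypothesis every object of $\M$ is fibrant, so in particular $A$ and $B$ are fibrant; hence the hypotheses of Proposition 13.1.2 are satisfied. That proposition then yields exactly the conclusion that $A \times_{B} E \to E$ is a weak equivalence, which is precisely what it means for $\M$ to be right proper.

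The only point worth a moment's care is matching the phrasing of the two statements: the ``pullback of a weak equivalence along a fibration'' appearing in the definition of right properness is the projection $A \times_{B} E \to E$ out of the pullback square on $f$ and $p$, and this is exactly the map whose being a weak equivalence is asserted by Proposition 13.1.2. I expect no genuine obstacle here; the corollary is an immediate specialization of the quoted proposition to the case where fibrancy of $A$ and $B$ is automatic, so the ``hard part'' is essentially nonexistent beyond correctly identifying the relevant map.
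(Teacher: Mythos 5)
Your argument is correct and is exactly the intended deduction: the paper states the corollary without proof as an immediate specialization of the quoted Proposition 13.1.2, since universal fibrancy makes the fibrancy hypotheses on $A$ and $B$ automatic. Your identification of the pullback projection $A \times_{B} E \to E$ with the map in the definition of right properness is the only point requiring care, and you handle it correctly.
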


The following proposition is proved in the case of a right proper model category in Proposition 13.3.10 of \cite{Hir03}.
However, it holds for fibrant objects instead of right properness by Corollary \ref{right}.

\begin{proposition}\label{stable}
For the following commutative diagrams
\[
\xymatrix{
A \ar[r] \ar[d] & C \ar[d] & B \ar[l] \ar[d] \\
A' \ar[r] & C' & B' \ar[l]
}
\]
in a model category $\M$, let the above three vertical morphisms be weak equivalences, and let all objects be fibrant.
If at least one morphism in each row is a fibration, then the induced morphism between pullbacks $A \times_{C} B \to A' \times_{C'} B'$ is a weak equivalence.
\end{proposition}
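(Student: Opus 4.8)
The plan is to reproduce the argument for Proposition 13.3.10 of \cite{Hir03}, replacing each appeal to right-properness by the quoted Proposition 13.1.2 of \cite{Hir03}; this is legitimate because every object occurring in the diagram, and in every pullback built from it, is fibrant, so the only instance of right-properness that is needed --- pulling a weak equivalence back along a fibration --- is available, in the spirit of Corollary \ref{right}. Concretely, I would factor the comparison map $A \times_C B \to A' \times_{C'} B'$ into a composite of maps that each alter only one of the three objects $A$, $B$, $C$, and recognize each such map as the pullback of a weak equivalence between fibrant objects along a fibration, hence a weak equivalence by Proposition 13.1.2 of \cite{Hir03} together with two-out-of-three.

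First I would perform a reduction so that \emph{both} legs of each span are fibrations. In the top row one leg, say $B \to C$, is already a fibration; factor the other leg as $A \xrightarrow{\sim} \hat A \twoheadrightarrow C$. Then the induced map $A \times_C B \to \hat A \times_C B$ is the pullback of $A \to \hat A$ along the projection $\hat A \times_C B \to \hat A$, which is a pullback of the fibration $B \to C$ and hence a fibration; since $A$ and $\hat A$ are fibrant, this map is a weak equivalence. Doing the same in the bottom row, and lifting the trivial cofibration $A \to \hat A$ against the fibration $\hat A' \twoheadrightarrow C'$ to obtain a map $\hat A \to \hat A'$ compatible with the rest of the diagram (a weak equivalence by two-out-of-three), reduces us to the case in which all four legs $A \to C$, $B \to C$, $A' \to C'$, $B' \to C'$ are fibrations and the three vertical maps are still weak equivalences. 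The case in which the two given fibrations lie on opposite legs is handled in the same way, factoring the non-fibration leg in each row; it again collapses to the all-legs-fibrations situation.

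In the reduced situation I would finish in two steps. Set $D = C \times_{C'} B'$; then $D \to C$ is a fibration (a pullback of $B' \to C'$), the projection $D \to B'$ is a weak equivalence by Proposition 13.1.2 of \cite{Hir03} (it is the pullback of the weak equivalence $C \to C'$ along the fibration $B' \to C'$, with $C$ and $C'$ fibrant), and the canonical map $B \to D$ is a weak equivalence by two-out-of-three. Because $A \to C$ is now a fibration, the projection $A \times_C D \to D$ is a fibration, so $A \times_C B \to A \times_C D$ --- the pullback of $B \to D$ along it --- is a weak equivalence, and $A \times_C D \cong A \times_{C'} B'$. Secondly, $A' \times_{C'} B' \to A'$ is a fibration (a pullback of $B' \to C'$), so $A \times_{C'} B' \to A' \times_{C'} B'$, being the pullback of the weak equivalence $A \to A'$ along it, is a weak equivalence. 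Composing the two steps and unwinding the reduction gives the statement.

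The main obstacle is the asymmetry built into Proposition 13.1.2 of \cite{Hir03}: one may modify a span through weak equivalences only in the leg \emph{opposite} to a fibration, so neither the fibration leg of a span nor its apex can be moved directly. The preliminary reduction to spans with both legs fibrations --- and in particular the small lifting argument that keeps the two rows' factorizations compatible --- is the step that needs care; once it is in place, every remaining move is a direct application of the quoted proposition together with two-out-of-three.
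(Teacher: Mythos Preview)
Your proposal is correct and is precisely the argument the paper has in mind: the paper does not give a proof of this proposition at all, but only remarks that Hirschhorn's Proposition 13.3.10 goes through verbatim once each appeal to right-properness is replaced by Proposition 13.1.2 of \cite{Hir03}, which is exactly the substitution you carry out. Your reduction to the case where all four legs are fibrations, including the small lifting argument for compatibility of the factorizations, and the two-step comparison via $D = C \times_{C'} B'$ are the standard way to make Hirschhorn's argument explicit, so there is nothing to add.
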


The ordinary pull back in a model category is not a homotopy invariant in general.
The notion of homotopy pullbacks fixes this inconvenient property.

\begin{definition}
A {\em path object} $Y^{I}$ of an object $Y$ in a model category $\M$ is a factorization
$$\Delta : Y \stackrel{s}{\longrightarrow} Y^{I} \stackrel{(p_{0},p_{1})}{\longrightarrow} Y \times Y$$
of the diagonal morphism such that $s$ is a weak equivalence and $(p_{0},p_{1})$ is a fibration in $\M$.

For two morphisms $f,g : X \to Y$ in a model category $\M$, a {\em right homotopy} from $f$ to $g$ consists of 
a path object $Y^{I}$ and a morphism $H : X \to Y^{I}$ such that $p_{0}H=f$ and $p_{1}H=g$.
Then we say that $f$ is right homotopic to $g$.
\end{definition}

The dual notions of {\em cylinder objects} and {\em left homotopic} exist.
If $f$ is both left homotopic and right homotopic to $g$, then we say $f$ is {\em homotopic} to $g$.

\begin{definition}[Homotopy pullback]\label{hopu}
For a diagram $A \stackrel{f}{\rightarrow} C \stackrel{g}{\leftarrow} B$ of fibrant objects in a model category $\M$,
choose a path object $C^{I}$ of $C$. 
The {\em homotopy pullback} $A \times_{C}^{h} B$ of the diagram is the ordinary pullback of the following diagram
\[
\xymatrix{
A \times_{C}^{h} B \ar[r] \ar[d] & A \times B \ar[d]^{f \times g} \\
C^{I} \ar[r]_{(p_{0},p_{1})} & C \times C.
}
\]
The above pullback $A \times_{C}^{h} B$ is isomorphic to the following limit
\[
\xymatrix{
A \times_{C}^{h} B \ar[dd] \ar[dr] \ar[rr] & & B \ar[d]^{g} \\
& C^{I} \ar[r]^{p_{1}} \ar[d]^{p_{0}} & C \\
A \ar[r]_{f} & C. 
}
\]
\end{definition}

Brown introduces the notion of homotopy pullbacks in a category equipped with only weak equivalences and fibrations \cite{Bro73}. 
Hirschhorn also gives a different definition of homotopy pullbacks in section 13.3 of \cite{Hir03}. 
However, we use the above Definition \ref{hopu} in order to investigate the relation with homotopy commutativity of diagrams.

\begin{lemma}
The weak equivalence class of the homotopy pullback $A \times_{C}^{h} B$
does not depend on the choice of path object $C^{I}$ of $C$. 
\end{lemma}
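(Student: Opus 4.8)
The plan is to show that two homotopy pullbacks built from different path objects $C^{I}$ and $C^{J}$ of $C$ are connected by a zigzag of weak equivalences, by comparing both to a common object obtained from a third path object dominating the two. First I would recall that any two path objects $C^{I}$ and $C^{J}$ admit a common refinement: the pullback $C^{I}\times_{C}C^{J}$ (formed over, say, $p_{0}^{I}$ and $p_{0}^{J}$) receives maps from each, and one can factor the map $C\to C^{I}\times_{C}C^{J}$ into a weak equivalence followed by a fibration to produce a third path object $C^{K}$ equipped with weak equivalences $C^{K}\to C^{I}$ and $C^{K}\to C^{J}$ compatible with the evaluation maps $(p_{0},p_{1})$ to $C\times C$. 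So it suffices to treat the case where we have a single weak equivalence $\varphi : C^{K}\to C^{I}$ over $C\times C$ and to show the two homotopy pullbacks agree up to weak equivalence.

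The key step is then an application of Proposition \ref{stable}. Writing $A\times_{C}^{h}B$ as the ordinary pullback of $A\times B \xrightarrow{f\times g} C\times C \xleftarrow{(p_{0},p_{1})} C^{I}$, and similarly with $C^{K}$, I would set up the commutative ladder
\[
\xymatrix{
A\times B \ar[r]^{f\times g} \ar@{=}[d] & C\times C \ar@{=}[d] & C^{K} \ar[l]_{(p_{0},p_{1})} \ar[d]^{\varphi} \\
A\times B \ar[r]_{f\times g} & C\times C & C^{I} \ar[l]^{(p_{0},p_{1})}
}
\]
whose vertical maps are weak equivalences between fibrant objects (here I use that $A$, $B$, $C$ are fibrant, hence so are $A\times B$, $C\times C$, and the path objects, which are fibrant over the fibrant object $C\times C$), and in which the right-hand horizontal maps $(p_{0},p_{1})$ are fibrations by definition of a path object. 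Proposition \ref{stable} then yields a weak equivalence $A\times_{C^{I}\text{-data}}^{h}B \to A\times_{C^{K}\text{-data}}^{h}B$ — more precisely it gives that the induced map on pullbacks is a weak equivalence — so the homotopy pullback built from $C^{K}$ is weakly equivalent to that built from $C^{I}$, and symmetrically to that built from $C^{J}$. Composing the zigzag $A\times_{C}^{h}B$ (via $C^{I}$) $\leftarrow$ (via $C^{K}$) $\rightarrow$ (via $C^{J}$) gives the claim.

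The main obstacle is the bookkeeping needed to produce the common path object $C^{K}$ and to verify that the comparison maps genuinely commute with the evaluation morphisms to $C\times C$, since Proposition \ref{stable} requires an honest commutative ladder over $C\times C$, not merely a homotopy-commutative one; one must check that the standard factorization construction can be carried out compatibly with the projections. A secondary point to be careful about is that Proposition \ref{stable} as stated needs fibrancy of all objects in the diagram, so I would note explicitly that path objects of the fibrant object $C$ are fibrant (being fibrant over $C\times C$, which is fibrant), and that $A\times B$ is fibrant. Once these are in place the argument is a direct application of the stated results.
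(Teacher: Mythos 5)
Your key step coincides with the paper's: both arguments reduce the statement to Proposition \ref{stable}, applied to a strictly commutative ladder over $C\times C$ whose outer columns are identities on $A\times B$ and $C\times C$ and whose remaining column is a weak equivalence of path objects commuting with the evaluation fibrations, together with the observation that all objects involved are fibrant. The difference is in how you produce that comparison map, and there your construction has a concrete defect: you form the pullback $C^{I}\times_{C}C^{J}$ over the two maps $p_{0}^{I}$ and $p_{0}^{J}$, but a map into this pullback only forces the $p_{0}$-components of the two projections to agree, not the $p_{1}$-components, so the projection of your $C^{K}$ to (say) $C^{J}$ will not commute with the evaluation $(p_{0},p_{1})$ to $C\times C$ --- which is exactly the strict (not merely homotopy) commutativity you yourself note Proposition \ref{stable} requires. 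The repair is either to pull back over $C\times C$ along both evaluation maps $(p_{0}^{I},p_{1}^{I})$ and $(p_{0}^{J},p_{1}^{J})$ --- then both projections commute with the evaluations, the composite $C^{I}\times_{C\times C}C^{J}\to C\times C$ is a fibration, and factoring $C\to C^{I}\times_{C\times C}C^{J}$ as a weak equivalence followed by a fibration yields a genuine common refinement $C^{K}$ whose projections are weak equivalences by two-out-of-three --- or, more economically, to do what the paper does: fix one path object $C^{I}$ with $s:C\to C^{I}$ a trivial cofibration and lift in the square whose right edge is the fibration $(p_{0}',p_{1}'):(C^{I})'\to C\times C$, obtaining directly a weak equivalence $C^{I}\to (C^{I})'$ over $C\times C$ for every other path object $(C^{I})'$; this dispenses with the common refinement and the zigzag entirely. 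Your fibrancy remarks are correct and are precisely what is needed to invoke Proposition \ref{stable}.
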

\begin{proof}
We can find a path object $C^{I}$ of $C$ such that $s : C \to C^{I}$ is a trivial cofibration by the factorization axiom.
Fix such a path object $C^{I}$ of $C$. 
For any path object $(C^{I})'$ of $C$, the lifting axiom induces a weak equivalence $C^{I} \to (C^{I})'$ making the following diagram commute
\[
\xymatrix{
C \ar[r]^{\sim} \ar[d]_{s}^{\sim} & (C^{I})' \ar[d]^{(p_{0}',p_{1}')} \\
C^{I} \ar[r]_{(p_{0},p_{1})} \ar@{..>}[ur] & C \times C.
}
\]
The following commutative diagram
\[
\xymatrix{
C^{I} \ar[r] \ar[d]_{\sim} & C \times C \ar@{=}[d] & A \times B \ar[l] \ar@{=}[d] \\
(C^{I})' \ar[r] & C \times C  & A \times B \ar[l]
}
\]
induces a weak equivalence between pullbacks of the row diagrams by Proposition \ref{stable}. Hence, $A \times_{C}^{h} B$ is determined uniquely up to weak  equivalence not depending on the choice of the path object.
\end{proof}

The homotopy pullback $A \times_{C}^{h} B$ makes the diagram
\[
\xymatrix{
A \times_{C} B \ar[r] \ar[d] & B \ar[d] \\
A \ar[r] & C
}
\]
commute up to homotopy.
The following proposition is proved in the dual statement 
using homotopy pushouts (double mapping cylinder) in Theorem 2.16 of \cite{KP97}.

\begin{theorem}\label{unique_mor}
For a homotopy commutative diagram 
\[
\xymatrix{
X \ar[r]^{f} \ar[d]_{q} & B \ar[d]^{p} \\
A \ar[r]_{g} & C 
}
\]
of fibrant objects in a model category $\M$, there exists a unique morphism 
$A \times_{C}^{h} B \to X$ making the diagram commute up to homotopy.
\end{theorem}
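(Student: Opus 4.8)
The plan is to establish this weak universal property of the homotopy pullback by dualizing, step for step, the argument for the double mapping cylinder (homotopy pushout) given in Theorem 2.16 of \cite{KP97}, now carried out in the path-object model of Definition \ref{hopu}. First I would record the data carried by the homotopy commutative square: the homotopy $p \circ f \simeq g \circ q$ is witnessed by a morphism $H$ into the chosen path object $C^{I}$ with $p_{0} H = g \circ q$ and $p_{1} H = p \circ f$, while $A \times_{C}^{h} B$ comes equipped with its structure projections to $A$, $B$, and $C^{I}$. The comparison morphism $A \times_{C}^{h} B \to X$ is then assembled from these ingredients together with the factorization axiom and the lifting axiom against the trivial cofibration $s : C \to C^{I}$, yielding a morphism whose composites with the projections agree up to homotopy with $q$ and $f$; this is exactly the requirement that the displayed diagram commute up to homotopy.

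For uniqueness up to homotopy, I would compare any two candidate morphisms by producing a right homotopy between them. Choosing a path object $X^{I}$ of $X$, the two candidates agree up to homotopy after composing with the projections to $A$ and $B$, and feeding these homotopies into $X^{I}$ via the lifting axiom yields a single right homotopy identifying the candidates. The fibrancy hypotheses are what legitimize this step: since every object in the diagram is fibrant, Corollary \ref{right} guarantees that $\M$ is right proper and that right homotopy is a well-behaved equivalence relation on morphisms into a fibrant target, so that the partial homotopies glue. Proposition \ref{stable} is available to control the comparison of the pullbacks that appear along the way.

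The step I expect to be the main obstacle is the faithful translation of the cylinder-functor computations of \cite{KP97}, which are phrased for an $I$-category built from cylinder objects, into the dual path-object language of a model category in which only fibrancy of the objects at hand is assumed. Concretely, one must check that every homotopy and every lift invoked in the dual of the double-mapping-cylinder argument remains available when one has only fibrancy rather than full right properness of $\M$; securing each such lift is precisely the role of Corollary \ref{right}, and this verification is the technical heart of the proof. The remaining work—that the comparison morphism makes the triangles with the projections commute up to homotopy, and that the constructed right homotopy respects those triangles—reduces to routine diagram chases once the ambient homotopies are in hand.
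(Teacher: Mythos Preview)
Your approach---dualize the double-mapping-cylinder argument of Theorem~2.16 in \cite{KP97}---is precisely what the paper does; in fact the paper offers no proof of its own beyond that citation, so carrying out the dualization is exactly what is being left to the reader.

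There is, however, a direction error running through both the paper's statement and your proposal: the universal morphism goes from $X$ \emph{to} $A \times_{C}^{h} B$, not the other way. This is confirmed by how the theorem is actually invoked in the lemma preceding Corollary~\ref{pi_0}, where a morphism \emph{from} $\mathbf{1}$ \emph{to} the homotopy pullback $E \times_{B}^{h} \mathbf{1}$ is extracted; and it is what dualizing the universal property of the homotopy pushout gives. Once the arrow is reversed, existence is more direct than you indicate: a right homotopy $H : X \to C^{I}$ witnessing $g q \simeq p f$, together with $(q,f) : X \to A \times B$, already assembles into a morphism $X \to A \times_{C}^{h} B$ by the ordinary pullback property of Definition~\ref{hopu}---no lifting against $s : C \to C^{I}$ is required for this step. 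For uniqueness up to homotopy the relevant path object is one for $A \times_{C}^{h} B$, not $X^{I}$ as you wrote; this is where the dualization of \cite{KP97} and the fibrancy hypotheses genuinely enter, allowing the componentwise homotopies over $A$, $B$, and $C^{I}$ to be reassembled into a single homotopy in the pullback.
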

 
The following pullback diagram
\[
\xymatrix{
A \times_{C} B \ar[r] \ar[d] & A \times B \ar[d]^{f \times g} \\
C \ar[r]_{\Delta} & C \times C.
}
\]
induces the canonical morphism $A \times_{C} B \to A \times_{C}^{h} B$ from the ordinary pullback to the homotopy pullback.

\begin{theorem}\label{back}
Let $A \stackrel{f}{\rightarrow} C \stackrel{g}{\leftarrow} B$ be a diagram of fibrant objects in a model category $\M$.
If $f$ or $g$ is a fibration, then the canonical morphism $\alpha : A \times_{C} B \to A \times_{C}^{h} B$ is a weak equivalence.
\end{theorem}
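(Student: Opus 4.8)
The plan is to realize $\alpha$ as the comparison map between two ordinary pullbacks and then invoke Proposition~\ref{stable}. By the symmetry of the statement we may assume that $g : B \to C$ is a fibration. Fix the path object $C \stackrel{s}{\to} C^{I} \stackrel{(p_{0},p_{1})}{\to} C \times C$ entering the construction of $A \times_{C}^{h} B$; since the weak-equivalence class of $A \times_{C}^{h} B$ is independent of this choice, we are free to work with it. First I would record two facts about the path object. Since $C$ is fibrant, $C \times C$ is fibrant and each projection $C \times C \to C$ is a fibration (a pullback of $C \to *$); hence $C^{I}$ is fibrant and each $p_{i} = \mathrm{pr}_{i} \circ (p_{0},p_{1}) : C^{I} \to C$ is a fibration. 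As $p_{i}s = \mathrm{id}_{C}$ and $s$ is a weak equivalence, the two-out-of-three property shows that $p_{i}$ is moreover a weak equivalence.

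Next I would rewrite the homotopy pullback as an iterated ordinary pullback, following Definition~\ref{hopu}: set $Q := C^{I} \times_{C} B$, the pullback of $p_{1}$ along $g$, with structure map $q : Q \to C$ given by $Q \to C^{I} \stackrel{p_{0}}{\to} C$, so that $A \times_{C}^{h} B \cong A \times_{C} Q$. Because $g$ is a fibration, the projection $Q \to C^{I}$ is a fibration and $Q$ is fibrant; hence $q$, being a composite of fibrations, is a fibration. The section $s$ of $p_{1}$ supplies a morphism $\sigma := (sg,\ \mathrm{id}_{B}) : B \to Q$ with $q\sigma = g$, and I claim $\sigma$ is a weak equivalence: the projection $\pi_{B} : Q \to B$ is the pullback of the weak equivalence $p_{1} : C^{I} \to C$ along the fibration $g : B \to C$, and since $C^{I}$ and $C$ are fibrant, Proposition~13.1.2 of \cite{Hir03} shows $\pi_{B}$ is a weak equivalence; as $\pi_{B}\sigma = \mathrm{id}_{B}$, two-out-of-three gives the claim.

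Finally, unwinding the definition of the canonical morphism, $\alpha$ is exactly the map induced on pullbacks by the commutative diagram of spans
\[
\xymatrix{
A \ar[r]^{f} \ar@{=}[d] & C \ar@{=}[d] & B \ar[l]_{g} \ar[d]^{\sigma} \\
A \ar[r]_{f} & C & Q \ar[l]^{q}
}
\]
whose vertical arrows are weak equivalences, whose four objects are fibrant, and each of whose rows contains a fibration ($g$ above, $q$ below). Proposition~\ref{stable} then yields that $\alpha : A \times_{C} B \to A \times_{C} Q \cong A \times_{C}^{h} B$ is a weak equivalence; the case in which $f$ rather than $g$ is the fibration follows by exchanging the roles of $A$ and $B$. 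I expect the only delicate point to be the verification that $q : Q \to C$ is a fibration, which is what makes Proposition~\ref{stable} applicable and which rests on $p_{0}$ being a fibration, a consequence of the fibrancy of $C$.
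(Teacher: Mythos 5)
Your argument is correct and is essentially the paper's own proof in mirror image: the paper assumes $f$ is a fibration and replaces $A$ by $A\times_{C}C^{I}$ (pulled back along $p_{0}$), whereas you assume $g$ is a fibration and replace $B$ by $Q=C^{I}\times_{C}B$ (pulled back along $p_{1}$), but in both cases the homotopy pullback is rewritten as an iterated ordinary pullback, the section $s$ supplies the comparison weak equivalence on one leg, and Proposition~\ref{stable} finishes the job. Your write-up is somewhat more careful than the paper's (which is just the one diagram) in checking that $q$ is a fibration, that $Q$ is fibrant, and that $\sigma$ is a weak equivalence via Proposition~13.1.2 and two-out-of-three.
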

\begin{proof}
Let $f$ be a fibration in $\M$. The following pullback diagrams
\[
\xymatrix{
A \times_{C} B \ar[r]^{\alpha} \ar[d] & A \times_{C}^{h} B \ar@{->}'[d][dd]  \ar[rr] & & B \ar[dd]^{g} \ar[dl]_{g} \\
A \ar@{=}[dd] \ar[rr]^{} \ar[dr]^{\sim}& & C \ar[d]_{s} \ar@{=}[dr] & \\
& A \times_{C} C^{I} \ar[d]_{\sim} \ar[r] & C^{I} \ar[r]^{p_{1}} \ar[d]^{p_{0}}_{\sim} & C \\
A \ar@{=}[r] &A \ar[r]_{f} & C. 
}
\]
imply that the canonical morphism $\alpha : A \times_{C} B \to A \times_{C}^{h} B$ is a weak equivalence by Proposition \ref{stable}.
\end{proof}

\begin{lemma}\label{homotopic}
Let $f,f' : A \to B$ and $g : B \to C$ be morphisms in a model category $\M$. 
If $f$ is homotopic to $f'$, then the homotopy pullback along $f$ is weakly equivalent to the one along $f'$.
\end{lemma}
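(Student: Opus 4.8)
The plan is to derive the lemma from the homotopy-universal property of the homotopy pullback (Theorem~\ref{unique_mor}) together with the fact, recorded just after Definition~\ref{hopu}, that the homotopy pullback square commutes up to homotopy. Write $P$ for the homotopy pullback of $A\xrightarrow{f}C\xleftarrow{g}B$ and $P'$ for the homotopy pullback of $A\xrightarrow{f'}C\xleftarrow{g}B$, and let $\pi_{A},\pi_{B}$ (respectively $\pi'_{A},\pi'_{B}$) denote their structure maps to $A$ and $B$. Since the homotopy pullback is only defined for diagrams of fibrant objects (Definition~\ref{hopu}), the objects $A$, $B$, $C$ are fibrant, and then $P$ and $P'$ are fibrant as well, being fibrations over the fibrant object $A\times B$.

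First I would note that, because $f\simeq f'$, the homotopy commutative square exhibiting $P$ over the first cospan is at the same time a homotopy commutative square over the second: one only needs $g\,\pi_{B}\simeq f\,\pi_{A}\simeq f'\,\pi_{A}$. Symmetrically the square for $P'$ is homotopy commutative over the first cospan. Feeding these two situations into Theorem~\ref{unique_mor} produces morphisms $u\colon P\to P'$ and $u'\colon P'\to P$ which are compatible up to homotopy with the two projections, that is $\pi'_{A}u\simeq\pi_{A}$, $\pi'_{B}u\simeq\pi_{B}$, $\pi_{A}u'\simeq\pi'_{A}$ and $\pi_{B}u'\simeq\pi'_{B}$.

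Next I would check that $u'u\simeq\mathrm{id}_{P}$ and $uu'\simeq\mathrm{id}_{P'}$. The self-map $u'u$ of $P$ satisfies $\pi_{A}(u'u)\simeq\pi_{A}$ and $\pi_{B}(u'u)\simeq\pi_{B}$, and so does $\mathrm{id}_{P}$; since $P$ is the homotopy pullback of the first cospan, the uniqueness clause of Theorem~\ref{unique_mor} forces $u'u\simeq\mathrm{id}_{P}$, and dually $uu'\simeq\mathrm{id}_{P'}$. It then remains to use that a self-map of a fibrant object homotopic to the identity is a weak equivalence: choosing, by the factorization axiom, a path object on that object whose two endpoint maps are trivial fibrations, the two-out-of-three axiom applied to the right homotopy shows first the homotopy itself and then the self-map to be a weak equivalence. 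Hence $u'u$ and $uu'$ are weak equivalences, and the two-out-of-six property of model categories yields that $u\colon P\to P'$ is a weak equivalence, which is the assertion.

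The step I expect to be most delicate is the bookkeeping in the second and third paragraphs: each application of Theorem~\ref{unique_mor} must be set up against the correct one of the two cospans, and the homotopies relating the structure maps must be combined consistently; one also has to make sure the closing ``homotopic to the identity is a weak equivalence'' argument does not covertly invoke cofibrancy — it does not, provided $P$ and $P'$ are kept fibrant and the path object is chosen as above. Should this route become cumbersome, an alternative is to replace $g$ by a weakly equivalent fibration, so that by Theorem~\ref{back} the two homotopy pullbacks are computed by ordinary pullbacks, and then to compare these by transporting the fibre along the path $H\colon A\to C^{I}$ witnessing $f\simeq f'$, the remaining work being a verification via Proposition~\ref{stable} that the resulting comparison between the two base changes is a weak equivalence; I would nevertheless prefer the universal-property argument, as it avoids having to check that forming that replacement leaves the homotopy pullback undisturbed.
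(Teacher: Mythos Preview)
Your main argument has a genuine gap at the step where you invoke the uniqueness clause of Theorem~\ref{unique_mor} to conclude $u'u\simeq\mathrm{id}_P$. The universal property of a homotopy pullback does \emph{not} assert that a map into $P$ is determined up to homotopy merely by its compositions with the projections $\pi_A,\pi_B$: the homotopy filling the square is part of the data, and different fillings can yield non-homotopic maps. For a concrete failure take $A=B=\ast$, $C=S^1$, and $f=g$ the basepoint inclusion, so that $P\simeq\Omega S^1$; every self-map of $\Omega S^1$ is trivially compatible with both projections to $\ast$, yet the constant map and the identity are not homotopic. In your situation the filling attached to $u'u$ is a concatenation of the structural homotopy of $P$, the witness for $f\simeq f'$, the structural homotopy of $P'$, and the witness for $f'\simeq f$, and there is no reason this loop of homotopies should be trivial. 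That is precisely the ``delicate bookkeeping'' you anticipate, and Theorem~\ref{unique_mor} does not discharge it for you.

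The paper sidesteps all of this and argues along the lines of your own alternative. It chooses a fibrant cylinder object $A\wedge I$ together with a left homotopy $h\colon A\wedge I\to C$ from $f$ to $f'$, forms the single homotopy pullback $(A\wedge I)\times_C^h B$, and notes that for each end inclusion $i_j\colon A\to A\wedge I$ the square
\[
\xymatrix{
A\times_C^h B \ar[r]\ar[d] & (A\wedge I)\times_C^h B \ar[d] \\
A\times B \ar[r]^-{\sim}_-{\,i_j\times 1\,} & (A\wedge I)\times B
}
\]
is a pullback whose right vertical is a fibration (being a base change of $C^I\to C\times C$); right properness among fibrant objects then makes the top arrow a weak equivalence. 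Thus both $P$ and $P'$ are weakly equivalent to the common object $(A\wedge I)\times_C^h B$. This is short and coherence-free; I would recommend adopting it rather than trying to repair the universal-property route.
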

\begin{proof}
Let $A \wedge I$ be a cylinder object and $h : A \wedge I \to C$ be a left homotopy from $f$ to $f'$.
We may take $A \wedge I$ as a fibrant object by Proposition 7.3.4 in \cite{Hir03}.
Consider the following pull back diagrams
\[
\xymatrix{
A \times_{C}^{h} B \ar[r]^{} \ar[d] & (A \wedge I) \times_{C}^{h} B \ar[d] \ar[r] & C^{I} \ar[d] \\ 
A \times B \ar[r]_{i_{j} \times 1}^{\sim} & (A \wedge I) \times B \ar[r] & C \times C
}
\]
for $j=0,1$. The middle vertical morphism $(A \wedge I) \times_{C}^{h} B \to (A \wedge I) \times B$ is a fibration since $C^{I} \to C \times C$ is a fibration.
Proposition \ref{right} implies that the homotopy pullback $A \times_{C}^{h} B$ is weakly equivalent to $(A \wedge I) \times_{C}^{h} B$ for each $j=0,1$.
\end{proof}

\begin{theorem}\label{stable2}
For the following homotopy commutative diagrams
\[
\xymatrix{
A \ar[r]^{f} \ar[d]_{\alpha} & C \ar[d]_{\beta} & B \ar[l]_{g} \ar[d]^{\gamma} \\
A' \ar[r]_{f'} & C' & B' \ar[l]^{g'}
}
\]
in a model category $\M$, let the above three vertical morphisms be weak equivalences, and let all objects be fibrant.
The homotopy pullbacks $A \times_{C}^{h} B $ and $A' \times_{C'}^{h} B'$ are weakly equivalent.
\end{theorem}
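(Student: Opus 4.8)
The plan is to reduce to the case where both rows commute strictly, and then to exploit the fact that a homotopy pullback is an \emph{ordinary} pullback of a diagram assembled from path objects, so that Proposition \ref{stable} applies. So first I would treat the strict case $\beta f = f'\alpha$, $\beta g = g'\gamma$. Using the factorization axiom, choose path objects $C \stackrel{s}{\longrightarrow} C^{I} \stackrel{(p_{0},p_{1})}{\longrightarrow} C \times C$ and $C' \stackrel{s'}{\longrightarrow} (C')^{I} \stackrel{(p_{0}',p_{1}')}{\longrightarrow} C' \times C'$ with $s,s'$ trivial cofibrations. Lifting $s'\beta$ against the fibration $(p_{0}',p_{1}')$ over $s$ yields a map $\beta^{I} : C^{I} \to (C')^{I}$ with $\beta^{I}s = s'\beta$ and $(p_{0}',p_{1}')\beta^{I} = (\beta \times \beta)(p_{0},p_{1})$, and two-out-of-three forces $\beta^{I}$ to be a weak equivalence. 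All of $A \times B$, $C \times C$, $C^{I}$ and their primed analogues are fibrant (a path object of a fibrant object is fibrant, being the source of a composite of fibrations ending at the terminal object), and $\alpha \times \gamma$, $\beta \times \beta$ are weak equivalences (each is a pullback of a weak equivalence along a projection, which is a fibration, so one applies Proposition 13.1.2 of \cite{Hir03}). Then in
\[
\xymatrix{
A \times B \ar[r]^{f \times g} \ar[d]_{\alpha \times \gamma} & C \times C \ar[d]^{\beta \times \beta} & C^{I} \ar[l]_{(p_{0},p_{1})} \ar[d]^{\beta^{I}} \\
A' \times B' \ar[r]_{f' \times g'} & C' \times C' & (C')^{I} \ar[l]^{(p_{0}',p_{1}')}
}
\]
the left square commutes because $(f' \times g')(\alpha \times \gamma) = (\beta f) \times (\beta g) = (\beta \times \beta)(f \times g)$, the right square commutes by construction of $\beta^{I}$, all objects are fibrant, all three vertical maps are weak equivalences, and each row contains a fibration. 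Proposition \ref{stable} then says the induced map on pullbacks, which by Definition \ref{hopu} is precisely $A \times_{C}^{h} B \to A' \times_{C'}^{h} B'$, is a weak equivalence. Note that this argument needs no fibration hypothesis on the maps $f,f',g,g'$ themselves.

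For the general case, homotopy commutativity gives $\beta f \simeq f'\alpha$ and $\beta g \simeq g'\gamma$, so I would factor the comparison through a homotopy pullback over $C'$. Applying the strict case just established to the two strictly commutative ladders
\[
\xymatrix{
A \ar[r]^{f} \ar@{=}[d] & C \ar[d]_{\beta} & B \ar[l]_{g} \ar@{=}[d] \\
A \ar[r]_{\beta f} & C' & B \ar[l]^{\beta g}
}
\qquad
\xymatrix{
A \ar[r]^{f'\alpha} \ar[d]_{\alpha} & C' \ar@{=}[d] & B \ar[l]_{g'\gamma} \ar[d]^{\gamma} \\
A' \ar[r]_{f'} & C' & B' \ar[l]^{g'}
}
\]
yields a weak equivalence from $A \times_{C}^{h} B$ to the homotopy pullback of $A \stackrel{\beta f}{\longrightarrow} C' \stackrel{\beta g}{\longleftarrow} B$, and one from the homotopy pullback of $A \stackrel{f'\alpha}{\longrightarrow} C' \stackrel{g'\gamma}{\longleftarrow} B$ to $A' \times_{C'}^{h} B'$. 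Finally, two applications of Lemma \ref{homotopic}, one for each leg of the cospan over $C'$, show that the homotopy pullback of $(\beta f,\beta g)$ is weakly equivalent to that of $(f'\alpha,g'\gamma)$; composing the three weak equivalences gives the theorem. Throughout, the independence of the homotopy pullback (up to weak equivalence) from the chosen path object, proved above, is used tacitly to match the various homotopy pullbacks.

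The step I expect to require the most care is the construction of the comparison map $\beta^{I}$ between path objects, together with the verification that it is a weak equivalence compatible with $(p_{0},p_{1})$ and $(p_{0}',p_{1}')$; this is the only place where the fibrancy hypotheses and the lifting axiom do essential work, everything else being bookkeeping with Proposition \ref{stable} and Lemma \ref{homotopic}. (If one assumes functorial factorizations, $\beta^{I}$ comes for free as the path-object functor applied to $\beta$, but the lifting argument above avoids that extra hypothesis.)
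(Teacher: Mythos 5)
Your proof is correct and follows essentially the same route as the paper's: both factor the comparison through homotopy pullbacks over $C'$, both build the comparison map of path objects $C^{I} \to (C')^{I}$ by lifting against $(p_{0}',p_{1}')$ and then invoke Proposition \ref{stable}, and both dispose of the homotopy commutativity via Lemma \ref{homotopic}. The only difference is organizational --- you isolate a uniform strictly commutative case and apply it twice, where the paper handles the two strict ladders by slightly different ad hoc pullback arguments --- so there is nothing substantive to flag.
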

\begin{proof}
The following pullback diagrams 
\[
\xymatrix{
A \times_{C'}^{h} B \ar[r]^{} \ar[d] & A' \times_{C'}^{h} B' \ar[d] \ar[r] & (C')^{I} \ar[d] \\ 
A \times B \ar[r]_{\alpha \times \gamma}^{\sim} & A' \times B' \ar[r] & C' \times C'
}
\]
induce that $A \times_{C'}^{h} B \to A' \times_{C'}^{h} B'$ is a weak equivalence.
Lemma \ref{homotopic} implies that the homotopy pull back of 
$A \stackrel{f'\circ \alpha}{\rightarrow} C' \stackrel{g' \circ \gamma}{\leftarrow} B$ 
is weakly equivalent to that of $A \stackrel{\beta \circ f}{\rightarrow} C' \stackrel{\beta \circ g}{\leftarrow} B$.
By taking a path object $C^{I}$ of $C$ such that $s : C \to C^{I}$ is a trivial cofibration, we have a weak equivalence $C^{I} \to (C')^{I}$ making the following diagram commute
\[
\xymatrix{
A \times B \ar[r]^{f \times g} \ar@{=}[d] & C \times C \ar[d]^{\beta \times \beta}_{\sim}  & C^{I} \ar[l]_{(p_{0},p_{1})} \ar[d]_{\sim} \\ 
A \times B \ar[r]_{\beta \circ f \times \beta \circ g} & C' \times C'  & (C')^{I}. \ar[l]^{(p_{0}',p_{1}')}
}
\]
Proposition \ref{stable} implies that $A \times_{C}^{h} B$ is weakly equivalent to $A \times_{C'}^{h} B$.
\end{proof}

\begin{corollary}\label{stable3}
For the following homotopy commutative diagrams
\[
\xymatrix{
A \ar[r]^{f} \ar[d]_{\alpha} & C \ar[d]_{\beta} & B \ar[l]_{g} \ar[d]^{\gamma} \\
A' \ar[r]_{f'} & C' & B' \ar[l]^{g'}
}
\]
in a model category $\M$, let the above three vertical morphisms be weak equivalences, and let all objects be fibrant.
If at least one morphism in each row is a fibration, the pullbacks $A \times_{C} B $ and $A' \times_{C'} B'$ are weakly equivalent.
\end{corollary}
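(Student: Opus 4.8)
The plan is to reduce the assertion about ordinary pullbacks to the corresponding assertion about \emph{homotopy} pullbacks, which is exactly Theorem \ref{stable2}, and then to use Theorem \ref{back} to pass between homotopy and ordinary pullbacks.

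First I would apply Theorem \ref{stable2} to the given homotopy commutative diagram. Since $\alpha$, $\beta$, $\gamma$ are weak equivalences and every object appearing in the diagram is fibrant, Theorem \ref{stable2} applies verbatim and shows that the homotopy pullbacks $A \times_{C}^{h} B$ and $A' \times_{C'}^{h} B'$ are weakly equivalent.

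Next I would compare the ordinary pullback with the homotopy pullback row by row. In the top row at least one of $f$, $g$ is a fibration and $A$, $B$, $C$ are fibrant, so Theorem \ref{back} shows that the canonical morphism $A \times_{C} B \to A \times_{C}^{h} B$ is a weak equivalence; applying the same result to the bottom row (where at least one of $f'$, $g'$ is a fibration and $A'$, $B'$, $C'$ are fibrant) shows that $A' \times_{C'} B' \to A' \times_{C'}^{h} B'$ is a weak equivalence. Concatenating the weak equivalence $A \times_{C} B \to A \times_{C}^{h} B$, the zigzag of weak equivalences between $A \times_{C}^{h} B$ and $A' \times_{C'}^{h} B'$ produced by Theorem \ref{stable2}, and the weak equivalence $A' \times_{C'} B' \to A' \times_{C'}^{h} B'$, and invoking the two-out-of-three property of weak equivalences, yields that $A \times_{C} B$ and $A' \times_{C'} B'$ are weakly equivalent.

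I do not expect a genuine obstacle here: the statement is essentially a formal consequence of the two preceding results, and the only point worth checking is that the fibrancy hypotheses demanded by Theorems \ref{stable2} and \ref{back} coincide with those assumed in the corollary, which they do, so both theorems apply directly with no additional work.
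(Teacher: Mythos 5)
Your proposal is correct and is exactly the paper's argument: the paper's proof consists of the single line ``This follows from Theorem \ref{stable2} and \ref{back}.'', which is precisely the reduction you carry out (Theorem \ref{stable2} for the homotopy pullbacks, Theorem \ref{back} applied to each row to compare with the ordinary pullbacks, then concatenate the zigzag). No further comment is needed.
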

\begin{proof}
This follows from Theorem \ref{stable2} and \ref{back}.
\end{proof}

\end{document}